\newcommand{\tenchi}{{\,}^{t}\!}
\newcommand{\varpivee}{\varpi^\vee}
\DeclareMathOperator{\stt}{R}
\newcommand{\st}{{\stt}}
\DeclareMathOperator{\All}{all}
\newcommand{\all}{{\All}}
\tikzstyle{wv} = [circle, draw, thin, inner sep=1.5pt, minimum size=1.5pt, fill = white]
\tikzstyle{bv} = [circle, draw, thin, inner sep=1.5pt, minimum size=1.5pt, fill = black]
\tikzstyle{vv} = [circle, draw, thin, inner sep=1.5pt, minimum size=1.5pt, fill = white]
\numberwithin{equation}{section}
\title[The characteristic quasi-polynomials of hyperplane arrangements with actions of finite groups]{The characteristic quasi-polynomials of hyperplane arrangements with actions of finite groups}
\author[Ryo Uchiumi]{Ryo Uchiumi}
\date{\today}
\subjclass[2010]{05E18, 20C10, 52C35}
\keywords{finite groups; hyperplane arrangements; characteristic quasi-polynomials.}
\thanks{The author was supported by JSPS KAKENHI, Grant Number 25KJ1735}
\begin{document}

\begin{abstract}
In this paper, we introduce an equivariant version of the characteristic quasi-polynomials as the permutation characters on the complement of mod $q$ hyperplane arrangements.
We prove that the permutation character is a quasi-polynomial in $q$, and show that it can be expressed by the sum of the induced characters of an equivariant version of the Ehrhart quasi-polynomials.
Furthermore, we consider the case of the Coxeter arrangements, and compute in detail for type $A_\ell$.

\end{abstract}

\maketitle

\tableofcontents


\section{Introduction}
\subsection{Quasi-polynomials}
\ \\*
Let $R$ be a commutative ring.
A function $F : \mathbb{Z}_{(>0)} \lra R$ is a \textit{quasi-polynomial} if 
there exist a positive integer $\tilde{n} \in \mathbb{Z}_{>0}$ (called a \textit{period}) and polynomials $f^{(1)},\ldots,f^{(\tilde{n})} \in R[t]$ (called  \textit{constituents}) such that 
\begin{align}
	F(z) = f^{(r)}(z) \IF z \equiv r \pmod{\tilde{n}} \quad (1 \leq r \leq \tilde{n}).
\end{align}
A quasi-polynomial $F$ has the \textit{gcd-property} if $\gcd\{\tilde{n},r_1\} = \gcd\{\tilde{n},r_2\}$ implies $f^{(r_1)} = f^{(r_2)}$, that is, the constituent $f^{(r)}$ depends on $r$ only through $\gcd\{\tilde{n}, r\}$.

Quasi-polynomials are often referred to as ``periodic polynomials".
As will be introduced later, they often appear as counting functions.


Let $L \cong \mathbb{Z}^\ell$ be a lattice of rank $\ell$ and $L_\mathbb{R} \ceq L \otimes \mathbb{R} \cong \mathbb{R}^\ell$.
Let ${P}$ be a rational polytope in $L_\mathbb{R}$.
The \textit{Ehrhart quasi-polynomial} of $P$ is the counting function 
\begin{align}
	\Ell_{P}(q) \ceq \#(q{P} \cap L)
\end{align}
of the lattice points of $q{P}$, which is a quasi-polynomial in $q$ (cf.\! \cite[Theorem 3.23]{BeckRobins}). 
Let ${P}^\circ$ denote the relative interior of ${P}$.
Then the counting function 
\begin{align}
	\Ell_{P}^\circ(q) \ceq \#(q{P}^\circ \cap L)
\end{align}
of the lattice points of $q{P}^\circ$ is also a quasi-polynomial in $q$.
Moreover, the equality
\begin{align}
	\Ell_{P}^\circ(q) = (-1)^{\dim{P}}\Ell_{P}(-q)
\end{align}
is well known as \textit{Ehrhart reciprocity}.

In recent years, Stapledon \cite{Stapledon} proposed an equivariant version of the Ehrhart theory.
Let $\GAMMA$ be a finite group acting linearly on $L$ via $\rho : \GAMMA \lra \GL(L)$.
Suppose that a rational polytope ${P}$ is $\GAMMA$-invariant.
For $q \in \mathbb{Z}_{>0}$, let $\chi_{P,q}$ denote the character of the permutation representation of $\GAMMA$ on $q{P} \cap L$.
Then for each $\gamma \in \GAMMA$, we have
\begin{align}
	\chi_{P,q}(\gamma) = \#(q{P} \cap L)^\gamma = \#\bigset{x \in q{P} \cap L}{\rho(\gamma)(x) = x}.
\end{align}
In particular, $\chi_{P,q}(1)$ is equal to the Ehrhart quasi-polynomial $\Ell_{P}(q)$ as a function in $q$.
Furthermore, Stapledon proved that $\chi_{P,q}$ is a quasi-polynomial in $q$ (whose coefficients contain characters of $\GAMMA$) {\cite[Theorem 5.7]{Stapledon}}.

Let $L^\vee \ceq \Hom_\mathbb{Z}(L,\mathbb{Z})$ denote the dual lattice of $L$.
Given $\alpha_1,\ldots,\alpha_n \in L^\vee$, we can define a (central) hyperplane arrangement $\A = \{H_1,\ldots,H_n\}$ in $L_\mathbb{R}$, with $H_i = \Bigset{x \in L_\mathbb{R}}{\alpha_i(x) = 0}$.

For each positive integer $q \in \mathbb{Z}_{>0}$, define $L_q \ceq L/qL$ and 
\begin{align}
	M(\A; q) \ceq \bigset{\bar{x} \in L_q}{\alpha_i(x) \not\equiv 0 \pmod{q} \tforall i \in \{1,\ldots,n\}}.
\end{align}
It is well known in \cite[Theorem 2.4]{KamiyaTakemuraTerao} that the counting function 
\begin{align}
	\chi_\A^\quasi(q) \ceq \#M(\A; q)
\end{align}
is a quasi-polynomial with gcd-property.
The function $\chi_\A^\quasi$ is called a \textit{characteristic quasi-polynomial} of $\A$, and roughly speaking, it is a mod $q$ version of the Ehrhart quasi-polynomial.
The prime constituent of $\chi_\A^\quasi$ is equal to the \textit{characteristic polynomial} $\chi_\A$ of $\A$ \cite[Theorem 2.2]{Athanasiadis}. The characteristic polynomial $\chi_\A$ is the most important invariant for $\A$, represented by
\begin{align}
	\chi_\A(t) = \sum_{\B \subseteq \A}(-1)^{\#\B}t^{\dim{H_\B}},
\end{align}
where $H_\B = \bigcap_{H \in \B}H$.
Furthermore, for a period $\tilde{n}$ of $\chi_\A^\quasi$, it is known that the $\tilde{n}$-th constituent of $\chi_\A^\quasi$ is equal to the associated toric arrangement (see \cite[Corollary 5.6]{LiuTranYoshinaga}, \cite[Theorem 2.5]{TranYoshinaga}).
Thus, the characteristic quasi-polynomials are also important in terms of toric arrangements and arithmetic matroids (cf. \cite{DAdderioMoci, HigashitaniTranYoshinaga, LiuTranYoshinaga, TranYoshinaga}).

\subsection{Main Results}
\ \\*
In this paper, we propose an equivariant version of the characteristic quasi-polynomial.
Let $\GAMMA$ be a finite group acting linearly on $L$.
Then the actions of $\GAMMA$ on $L_q = L/qL$ and $L^\vee = \Hom_\mathbb{Z}(L,\mathbb{Z})$ are naturally induced, respectively.
Suppose that a hyperplane arrangement $\A$ defined over $L$ is $\GAMMA$-invariant.
We consider the permutation character $\chi_{\A,q}$ of $\GAMMA$ on $M(\A;q)$.
Then 
\begin{align}
	\chi_{\A,q}(\gamma) = \#M(\A;q)^\gamma = \#\bigset{\bar{x} \in M(\A;q)}{\rho_q(\gamma)(\bar{x}) = \bar{x}}
\end{align}
for each $\gamma \in \GAMMA$.
In the case where $\A = \emptyset$, it is proved in \cite[Theorem 1.3]{UchiumiYoshinaga} that $\chi_{\A,q}$ is a quasi-polynomial in $q$ and has the gcd-property.
In this paper, we show that in general.

\begin{theorem}[\cref{Main result 1}] \label{thm1.1}
	The permutation character $\chi_{\A,q}$ is a quasi-polynomial in $q$.
	Furthermore, $\chi_{\A,q}$ has the gcd-property.
\end{theorem}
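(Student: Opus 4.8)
The plan is to fix a single element $\gamma \in \GAMMA$ and compute the fixed-point count $\chi_{\A,q}(\gamma) = \#M(\A;q)^\gamma$ by inclusion–exclusion over subarrangements, thereby reducing everything to counting solutions of systems of linear congruences modulo $q$. Write $L_q^\gamma \ceq \bigset{\bar x \in L_q}{\rho_q(\gamma)(\bar x) = \bar x}$ for the set of $\gamma$-fixed points, and for each $i$ set $\bar H_i(q) \ceq \bigset{\bar x \in L_q}{\alpha_i(x) \equiv 0 \pmod{q}}$. Since $\A$ is $\GAMMA$-invariant, $M(\A;q)$ is stable under $\GAMMA$, so $M(\A;q)^\gamma = L_q^\gamma \cap \bigcap_i\bigl(L_q \setminus \bar H_i(q)\bigr)$. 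Applying ordinary inclusion–exclusion on the finite set $L_q^\gamma$ gives
\begin{align}
	\chi_{\A,q}(\gamma) = \sum_{\B \subseteq \A}(-1)^{\#\B}\, N_\B(\gamma,q), \qquad N_\B(\gamma,q) \ceq \#\Bigl(L_q^\gamma \cap \bigcap_{H_i \in \B}\bar H_i(q)\Bigr).
\end{align}

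Next I would identify each $N_\B(\gamma, q)$ with the number of points of $L_q$ annihilated by a single integral matrix reduced modulo $q$. The condition $\rho_q(\gamma)(\bar x) = \bar x$ is equivalent to $(\rho(\gamma) - \mathrm{id})x \in qL$, and the condition $\bar x \in \bar H_i(q)$ to $\alpha_i(x) \equiv 0 \pmod{q}$; both descend to well-defined conditions on $\bar x \in L_q$. Assembling them into the homomorphism $\Phi_\B^\gamma \colon L \lra L \oplus \mathbb{Z}^{\B}$, $x \mapsto \bigl((\rho(\gamma) - \mathrm{id})x,\,(\alpha_i(x))_{H_i \in \B}\bigr)$, we obtain $N_\B(\gamma,q) = \#\ker\bigl(\Phi_\B^\gamma \bmod q\bigr)$. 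Choosing bases and passing to the Smith normal form of $\Phi_\B^\gamma$, with elementary divisors $e_1,\dots,e_\ell$ (under the convention $\gcd(0,q) = q$), a direct computation yields
\begin{align}
	N_\B(\gamma,q) = \prod_{i=1}^{\ell}\gcd(e_i, q).
\end{align}
Each factor $\gcd(e_i, q)$ is a quasi-polynomial in $q$ of period $|e_i|$ possessing the gcd-property, and a finite product of such is again a gcd-property quasi-polynomial, so $N_\B(\gamma,q)$ is a gcd-property quasi-polynomial in $q$.

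Finally I would assemble the pieces. The first display exhibits $\chi_{\A,q}(\gamma)$ as a $\mathbb{Z}$-linear combination of gcd-property quasi-polynomials in $q$; to conclude that it is itself one, I would prove the stability lemma that any $\mathbb{Z}$-linear combination of gcd-property quasi-polynomials is again a gcd-property quasi-polynomial. For this, put all summands over the common period $N$ equal to the least common multiple of their individual periods $\tilde{n}_j$; since $\tilde{n}_j \mid N$ one has $\gcd(\tilde{n}_j, r) = \gcd(\tilde{n}_j, \gcd(N,r))$, so each constituent depends on $r$ only through $\gcd(N,r)$, and hence so does the combination. Applying this to every $\gamma \in \GAMMA$ and taking the least common multiple of the resulting periods over the finitely many $\gamma$ produces a single period for which the class-function-valued constituent depends only on $\gcd(N, r)$, which is precisely the assertion. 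The step I expect to require the most care is this last bookkeeping: the quasi-polynomiality and the kernel computation are essentially routine, but the gcd-property does not obviously survive the passage to a common period, so one must verify the reduction of $\gcd(\tilde{n}_j, \cdot)$ through $\gcd(N, \cdot)$ carefully and uniformly across all $\gamma \in \GAMMA$.
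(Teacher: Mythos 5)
Your proposal is correct and follows essentially the same route as the paper: fix $\gamma$, apply inclusion--exclusion over subarrangements to reduce $\#M(\A;q)^\gamma$ to counting kernels of the block integer matrices $\bigl(R_\gamma - I \;\; S_J\bigr)$ modulo $q$, evaluate these via Smith normal form as $\prod_i \gcd(e_i,q)$, and then verify that sums of gcd-property quasi-polynomials over a common period retain the gcd-property. The only cosmetic difference is that the paper additionally packages the conclusion by pairing with irreducible characters, which your class-function formulation covers implicitly.
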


Let $T \ceq L_\mathbb{R}/L$ be an $\ell$-torus, and let $T(\A)$ be the complement of an arrangement $\A$ in $T$.
For $q \in \mathbb{Z}_{>0}$, let $T[q]$ be the set of the $q$-torsion points in $T$.
Then $T[q]$ can be considered as a substitute $L_q$.
Hence 
\begin{align}
	\chi_\A^\quasi(q) = \#\bigl(T(\A) \cap T[q]\bigr).
\end{align}
Let $\mathcal{C}_T$ denote the set of connected components of $T(\A)$.
Then we have
\begin{align}
	\chi_\A^\quasi(q) = \sum_{C \in \mathcal{C}_T}\Ell_C(q),
\end{align}
where $\Ell_C(q)$ denote the number of the $q$-torsion points in $C$.
We obtain an equivariant version of the above claim.
For $C \in \mathcal{C}_T$, the \textit{$\GAMMA$-orbit} $\GAMMA(C)$ and the \textit{isotropy subgroup} $\GAMMA_{\bar{C}}$ are defined as
\begin{align}
	\GAMMA(C) = \bigset{\gamma(C) \in \mathcal{C}_T}{\gamma \in \GAMMA},\quad \GAMMA_{C} = \bigset{\gamma \in \GAMMA}{\gamma(C) = C}.
\end{align}
For a character $\psi$ of a subgroup $H$ of $\GAMMA$, the \textit{induced character} $\Ind_H^\GAMMA\psi$ of $\psi$ is the character of $\GAMMA$ represented by 
\begin{align}
	\Bigl(\Ind_H^\GAMMA\psi\Bigr)(\gamma) = \dfrac{1}{\#H}\sum_{\substack{\sigma \in \GAMMA\\\sigma^{-1}\gamma\sigma \in H}}\psi(\sigma^{-1}\gamma\sigma).
\end{align}
The main result of this paper is as follows.

\begin{theorem}[\cref{Main result 2}] \label{thm1.2}
	For $q \in \mathbb{Z}_{>0}$, we have
	\begin{align}
		\chi_{\A,q} = \sum_{C \in \mathcal{C}_T}\dfrac{1}{\#\GAMMA(C)}\Ind_{\GAMMA_{C}}^\GAMMA\chi_{C,q} = \sum_{i=1}^k \Ind_{\GAMMA_{C_i}}^\GAMMA\chi_{{C_i},q},
	\end{align}
	where $\{C_1,\ldots,C_k\}$ is the set of all representatives of $\GAMMA$-orbits of $\mathcal{C}_T$.
\end{theorem}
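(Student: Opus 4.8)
The plan is to reduce the statement to a general fact about the permutation character of a $\GAMMA$-set that is partitioned into blocks permuted by $\GAMMA$. First I would replace $L_q = L/qL$ by the isomorphic $\GAMMA$-set $T[q]$ of $q$-torsion points in $T$, so that $\chi_{\A,q}$ becomes the permutation character of $\GAMMA$ on $T(\A) \cap T[q]$; the $\GAMMA$-equivariance of the identification $M(\A;q) \cong T(\A) \cap T[q]$ holds because $\A$ is $\GAMMA$-invariant, so $\GAMMA$ preserves the complement $T(\A)$ and hence its intersection with $T[q]$.

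Next I would exploit the $\GAMMA$-equivariant decomposition $T(\A) \cap T[q] = \bigsqcup_{C \in \mathcal{C}_T}(C \cap T[q])$, in which $\GAMMA$ permutes the summands exactly as it permutes the connected components in $\mathcal{C}_T$, via $\gamma \cdot (C \cap T[q]) = \gamma(C) \cap T[q]$. The isotropy subgroup $\GAMMA_C$ preserves $C \cap T[q]$, and $\chi_{C,q}$ is by definition the permutation character of $\GAMMA_C$ on that set (an empty block contributes the zero character and is harmless).

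The core step is a block-induction lemma: if a finite $\GAMMA$-set $X$ decomposes as $X = \bigsqcup_i B_i$ with $\GAMMA$ permuting the blocks $\{B_i\}$, and $\{B_{i_1},\ldots,B_{i_k}\}$ is a set of orbit representatives with stabilizers $H_j = \GAMMA_{B_{i_j}}$, then the permutation character of $\GAMMA$ on $X$ equals $\sum_{j=1}^{k} \Ind_{H_j}^\GAMMA \psi_j$, where $\psi_j$ is the permutation character of $H_j$ on $B_{i_j}$. I would prove this by fixed-point counting: a point $x \in X^\gamma$ forces $\gamma$ to fix the block containing it, so $\#X^\gamma = \sum_{i \,:\, \gamma B_i = B_i} \#(B_i)^\gamma$; writing each block in a fixed orbit as $\sigma B_{i_j}$ for coset representatives $\sigma$ of $H_j$ in $\GAMMA$, one checks $\#(\sigma B_{i_j})^\gamma = \psi_j(\sigma^{-1}\gamma\sigma)$ whenever $\sigma^{-1}\gamma\sigma \in H_j$, and summing over all $\sigma \in \GAMMA$ and dividing by $\#H_j$ reproduces exactly the induced-character formula displayed above. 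Applying this lemma with the blocks $C \cap T[q]$ yields the second equality of the theorem.

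Finally I would derive the averaged form. Since induced characters are invariant under simultaneous conjugation of the subgroup and the character, $\Ind_{\GAMMA_{C'}}^\GAMMA \chi_{C',q} = \Ind_{\GAMMA_C}^\GAMMA \chi_{C,q}$ whenever $C' = \gamma(C)$ lies in the orbit of $C$; each orbit therefore contributes $\#\GAMMA(C)$ identical terms to $\sum_{C \in \mathcal{C}_T}\frac{1}{\#\GAMMA(C)}\Ind_{\GAMMA_C}^\GAMMA \chi_{C,q}$, and the normalizing factor $1/\#\GAMMA(C) = 1/[\GAMMA:\GAMMA_C]$ collapses them to a single summand per orbit, recovering the orbit-representative sum. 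I expect the main obstacle to be the bookkeeping in the fixed-point count---matching the coset structure of the blocks to the induced-character formula---rather than any conceptual difficulty; the $\GAMMA$-equivariance of the various identifications is routine once the $\GAMMA$-invariance of $\A$ is used.
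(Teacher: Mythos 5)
Your proposal is correct and follows essentially the same route as the paper: identify $M(\A;q)$ with $T(\A)\cap T[q]$, decompose it into the blocks $C\cap T[q]$ permuted by $\GAMMA$, and match the fixed-point count $\#X^\gamma=\sum_{\gamma(C)=C}\#(C\cap T[q])^\gamma$ against the explicit induced-character formula, summing over coset representatives and dividing by $\#\GAMMA_{C_i}$. The only cosmetic difference is that you package the computation as a general block-induction lemma and explicitly justify the averaged form via conjugation-invariance of induction, whereas the paper carries out the same calculation inline for the chambers.
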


As more detailed examples, we consider an irreducible root system $\PHI$.
Suppose that $L = Z(\PHI)$ is the coweight lattice, $\GAMMA = W(\PHI)$ is the Weyl group, and $\A = \A(\PHI)$ is the Coxeter arrangement.
Let $A_\circ$ be the fundamental alcove of $\PHI$.
Then we obtain the following.
\begin{theorem}[\cref{weylver}] \label{thm1.3}
	For $q \in \mathbb{Z}_{>0}$, we have
	\begin{align}
		\chi_{\A,q} = \Ind^{W(\PHI)}_{W(\PHI)_{{A_\circ}}}\chi_{A_\circ,q}.
	\end{align}
\end{theorem}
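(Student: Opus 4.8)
The plan is to obtain \cref{thm1.3} as a direct specialization of \cref{thm1.2}: the substance is to prove that the Weyl group $W(\PHI)$ acts \emph{transitively} on the set $\mathcal{C}_T$ of connected components of $T(\A)$, so that the sum over orbit representatives in \cref{thm1.2} collapses to the single term attached to the fundamental alcove. First I would identify $\mathcal{C}_T$ concretely. Writing $V = L_\mathbb{R}$ with $L = Z(\PHI)$ the coweight lattice and $T = V/L$, the complement of the affine Coxeter arrangement $\{H_{\alpha,k} : \alpha \in \PHI,\ k \in \mathbb{Z}\}$ in $V$ decomposes into alcoves; passing to the quotient $T = V/L$ identifies $\mathcal{C}_T$ with the set of alcoves taken modulo translation by $L$, each component being the image of a single alcove.

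The heart of the argument is the transitivity. I would introduce the extended affine Weyl group $\widetilde{W} = W(\PHI) \ltimes L$ acting on $V$. Since $L$ is the coweight lattice, every $\lambda \in L$ satisfies $\langle \alpha, \lambda\rangle \in \mathbb{Z}$ for all $\alpha \in \PHI$, so translation by $\lambda$ sends $H_{\alpha,k}$ to $H_{\alpha,\,k+\langle\alpha,\lambda\rangle}$ and thus $\widetilde{W}$ permutes the alcoves. Because $\widetilde{W}$ contains the affine Weyl group $W(\PHI) \ltimes Q^\vee$ (where $Q^\vee \subseteq L$ is the coroot lattice), and the latter is already known to act simply transitively on alcoves, $\widetilde{W}$ is transitive on alcoves. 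Finally, as $L$ is normal in $\widetilde{W}$ and acts trivially on alcoves modulo $L$, the quotient $\widetilde{W}/L \cong W(\PHI)$ acts transitively on the set of alcoves modulo $L$, i.e.\ on $\mathcal{C}_T$. Hence there is a single $W(\PHI)$-orbit, so $k = 1$ in \cref{thm1.2}.

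It then remains to choose the orbit representative $C_1$ to be the image in $T$ of the fundamental alcove $A_\circ$. By the definition of the isotropy subgroup its stabilizer is $\GAMMA_{C_1} = W(\PHI)_{A_\circ} = \{w \in W(\PHI) : w(A_\circ) \equiv A_\circ \pmod L\}$, and under the identification of $C_1$ with the image of $A_\circ$ one has $\chi_{C_1,q} = \chi_{A_\circ,q}$ (the equivariant Ehrhart character of the alcove). Substituting $k=1$ and this representative into the second formula of \cref{thm1.2} gives exactly $\chi_{\A,q} = \Ind_{W(\PHI)_{A_\circ}}^{W(\PHI)}\chi_{A_\circ,q}$. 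The main obstacle I anticipate is the transitivity step: it rests on the classical simple transitivity of the affine Weyl group on alcoves, combined with the observation that enlarging the translation lattice from $Q^\vee$ to the coweight lattice $L$ keeps the arrangement invariant; once this is secured, everything else is a clean specialization of \cref{thm1.2} together with a routine identification of the representative component with $A_\circ$.
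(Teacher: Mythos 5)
Your proposal is correct and follows essentially the same route as the paper: the paper likewise reduces \cref{thm1.3} to \cref{thm1.2} by proving that $W(\PHI)$ acts transitively on $\mathcal{C}_T$, using the simple transitivity of the affine Weyl group $W_\aff = W(\PHI) \ltimes \veeQ$ on alcoves together with the inclusion $\veeQ \subseteq Z(\PHI)$, which makes the translation part vanish in $T$. Your phrasing via the extended affine Weyl group $W(\PHI) \ltimes L$ is only a cosmetic repackaging of the same argument.
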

By substituting $1$ into the above formula, we recover the equation  obtained in \cite[Proposition 3.7]{YoshinagaW} between the characteristic quasi-polynomial of $\A$ and the Ehrhart quasi-polynomial of $A_\circ$.

The organization of this paper as follows:
To prove \cref{thm1.1}, we review some notions and settings in \cref{sec2.1} and \cref{sec2.2}.
We will prove \cref{thm1.1} in \cref{sec2.3}.
Furthermore, we will consider about a period of $\chi_{\A,q}$, and provide several examples in \cref{sec2.4}.
In \cref{S3}, we consider points of $L_q$ to be $q$-torsion points of the torus $T$.
Moreover, we will give the explicit expression of points fixed by $\gamma \in \GAMMA$.
In \cref{S4}, we will prove \cref{thm1.2}.
In \cref{S5}, we consider the case of the Coxeter arrangements.
We will prove \cref{thm1.3} in \cref{sec5.2}.
Furthermore, we compute $\chi_{\A,q}$ in the case of type $A_\ell$  in \cref{sec5.3}.


\section{Equivariant version of characteristic quasi-polynomials}\label{sec2}

\subsection{Group actions and representations}\label{sec2.1}
\ \\*
We recall notions of representations of finite groups.
For details, refer to \cite{Serre}.

Let $\GAMMA$ be a finite group.
Let $V$ be a finite-dimensional vector space over $\mathbb{C}$.
A \textit{representation} of $\GAMMA$ on $V$ is a group homomorphism $\rho : \GAMMA \lra \GL(V)$, where $\GL(V)$ denote the group of linear isomorphism of $V$ onto itself.
The space $V$ is called the representation space of $\rho$.
The \textit{character} $\chi_\rho : \GAMMA \lra \mathbb{C}$ of the representation $\rho$ is the function defined by $\chi_\rho = \tr \circ \rho$, where $\tr$ is the trace function.
The character $\chi_\rho$ is a class function on $\GAMMA$, which is constant on each conjugacy class of $\GAMMA$.
For characters $\phi,\psi$ of $\GAMMA$, define the inner product $(\phi,\psi)$ as
\begin{align}
	(\phi,\psi) = \dfrac{1}{\#\GAMMA}\sum_{\gamma \in \GAMMA}\phi(\gamma)\overline{\psi(\gamma)},
\end{align}
where $\overline{z}$ denote the complex conjugate of $z \in \mathbb{C}$.
Let $\{\chi_1,\ldots,\chi_k\}$ be the set of all irreducible characters of $\GAMMA$.
Then $\{\chi_1,\ldots,\chi_k\}$ form an orthonormal basis for the space of class functions of $\GAMMA$, that is, $(\chi_i,\chi_j) = \delta_{ij}$, where $\delta_{ij}$ is the Kronecker delta.

Let $H$ be a subgroup of $\GAMMA$, and let $\{g_1,\ldots,g_n\}$ be a set of all representatives in $\GAMMA$ of the left cosets in $G/H$, where $n$ is the index of $H$ in $\GAMMA$.
Let $\rho$ be a representation of $\GAMMA$, and let $\rho_H$ be its restriction to $H$.
Let $W$ be a subrepresentation space of $\rho_H$ and $\theta : H \lra \GL(W)$ this representation.
For $\gamma \in \GAMMA$, the vector space $\rho(\gamma)(W)$ depends only on the left coset of $\gamma$.
Hence the sum $\rho(g_1)(W) + \cdots + \rho(g_n)(W)$ is a subrepresentation space of $V$.
A representation $\rho$ is \textit{induced} by $\theta$ if $V$ is equal to the direct sum $\rho(g_1)(W) \oplus \cdots \oplus \rho(g_n)(W)$.
For any representation $\theta$ of $H$, there exists a representation induced by $\theta$, and it is unique up to isomorphism \cite[Theorem 11]{Serre}.
Such representation is often denoted by $\Ind_H^\GAMMA\theta$.
Let $\chi_\theta$ be the character of $\theta$.
The character of $\Ind_H^\GAMMA\theta$ is denoted by $\Ind_H^\GAMMA\chi_\theta$, and is called the \textit{induced character} of $\chi_\theta$.
It is well known that the following equation holds \cite[Theorem 12]{Serre}:
\begin{align}
	\Bigl(\Ind_H^\GAMMA\chi_\theta\Bigr)(\gamma) = \dfrac{1}{\#H}\sum_{\substack{\sigma \in \GAMMA\\ \sigma^{-1}\gamma\sigma \in H}}\chi_\theta(\sigma^{-1}\gamma\sigma).
\end{align}

Suppose that $\GAMMA$ acts on a finite set $X$.
Let $\mathbb{C}X$ denote the vector space based on $X$.
The $\GAMMA$-action on $X$ induces a natural representation $\rho_X : \GAMMA \lra \GL(\mathbb{C}X)$, which is called the \textit{permutation representation} on $X$.
In particular, $\GAMMA$ acts on itself by the left multiplication.
Its permutation representation is called the \textit{regular representation}.
Since the representation matrix of each $\rho_X(\gamma)$ is a permutation matrix, the character $\chi_X$ of $\rho_X$ is obtained as a counting function of the number of points in $X$ fixed by $\gamma$:
\begin{align}
	\chi_X(\gamma) = \#X^\gamma = \#\bigset{x \in X}{\gamma x = x}.
\end{align}
Furthermore, the character $\chi_\st$ of the regular representation (called the \textit{regular character} of $\GAMMA$) satisfies 
\begin{align}
	\chi_\st(\gamma) = \begin{cases*}
		\#\GAMMA & if $\gamma = 1$;\\
		0 & otherwise.
	\end{cases*}
\end{align}

In this paper, we will use some special characters.
Define $\boldsymbol{1} : \GAMMA \lra \mathbb{C}$ by $\boldsymbol{1}(\gamma) = 1$ for any $\gamma \in \GAMMA$.
Then $\boldsymbol{1}$ is one of the irreducible character of $\GAMMA$ and called the \textit{trivial character}.
Suppose that $\GAMMA$ can be regarded as the subgroup of the general linear group  $\GL_n(\mathbb{Z})$ over integers.
Define $\boldsymbol{\delta} : \GAMMA\lra \mathbb{C}$ by $\boldsymbol{\delta}(\gamma) = \det{\gamma}$.
Then $\boldsymbol{\delta}$ is also one of the irreducible character of $\GAMMA$.

\subsection{Setting}\label{sec2.2}
\ \\*
Let $L$ be a lattice generated by $B = \{\beta_1,\ldots,\beta_\ell\} \subset L$, that is, $L = \mathbb{Z}\beta_1 \oplus \cdots \oplus \mathbb{Z}\beta_\ell$.
We identify an element $x = x_1\beta_1 + \cdots + x_\ell\beta_\ell \in L$ with $(x_1,\ldots,x_\ell) \in \mathbb{Z}^\ell$ naturally.
Let $L^\vee \ceq \Hom_\mathbb{Z}(L,\mathbb{Z})$ denote the dual lattice and $B^\vee = \{\beta_1^\vee,\ldots,\beta_\ell^\vee\}$ be the dual lattice of $B$.
Then $L^\vee$ is generated by $B^\vee$ and we have $\beta_i^\vee(\beta_j) = \delta_{ij}$.
Let $L_\mathbb{R} \ceq L \otimes \mathbb{R}$.
Given $\alpha_1,\ldots,\alpha_n \in L^\vee$, define a central hyperplane arrangement $\A = \{H_1,\ldots,H_n\}$ in $L_\mathbb{R}$ with $H_i = \Bigset{x \in L_\mathbb{R}}{\alpha_i(x) = 0}$.
For each $i \in \{1,\ldots,n\}$, let $s_i = (s_{1i},\ldots,s_{\ell i})$ satisfy $\alpha_i = s_{1i}\beta_1^\vee + \cdots + s_{\ell i}\beta_\ell^\vee$.
Then we have $x \tenchi s_i = \alpha_i(x)$ for $x \in L_\mathbb{R}$.

For a positive integer $q \in \mathbb{Z}_{>0}$, define $L_q \ceq L/qL$ and $\mathbb{Z}_q \ceq \mathbb{Z}/q\mathbb{Z}$, and set
\begin{align}
	M(\A;q) &\ceq \bigset{\bar{x} \in L_q}{\alpha_i(x) \not\equiv 0 \pmod{q} \tforall i \in \{1,\ldots,n\}}\\
	&= \bigset{\bar{x} \in L_q}{\bar{x}[S]_q \in (\mathbb{Z}_q\setminus\{0\})^n},
\end{align}
where $\bar{x} = \pi_q(x)$ for $x \in L$ and the natural projection $\pi_q : L \lra L_q$, and $S = \bigl(s_1 \cdots s_\ell\bigr) = \bigl(s_{ij}\bigr)_{ij}$ is an $\ell \times n$ matrix and $[S]_q$ is the $q$-reduction of $S$.
Note that the set $M(\A; q)$ can be considered as the complement of ``hyperplane arrangement" in $L_q$ as follows:
For $q \in \mathbb{Z}_{>0}$, the set 
\begin{align}
	H_{i,q} \ceq \bigset{\bar{x} \in L_q}{\alpha_i(x) \equiv 0 \pmod{q}}
\end{align}
can be called a ``hyperplane" in $L_q$ and the set $\A_q \ceq \{H_{1,q},\ldots,H_{n,q}\}$ can be called a ``hyperplane arrangement" in $L_q$.
Then we have 
\begin{align}
	M(\A; q) = L_q \setminus \bigcup_{H_{i,q} \in \A_q}H_{i,q}.
\end{align}

The counting function $\chi_\A^\quasi(q) \ceq \#M(\A;q)$ is called a \textit{characteristic quasi-polynomial} of $\A$, introduced by Kamiya--Takemura--Terao in \cite{KamiyaTakemuraTerao}.

\begin{theorem}[{\cite[Theorem 2.4]{KamiyaTakemuraTerao}}]
	Let $\A$ be a central arrangement.
	Then the following statements hold:
	\begin{itemize}
		\item The function $\chi_\A^\quasi$ is a quasi-polynomial with gcd-property.
		\item Each constituent of $\chi_\A^\quasi$ is a monic polynomial of degree $\ell$.
	\end{itemize}
\end{theorem}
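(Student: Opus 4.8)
The plan is to compute $\chi_\A^\quasi(q) = \#M(\A;q)$ by inclusion--exclusion over the subarrangements of $\A$, reduce each resulting term to the number of solutions of a homogeneous system of linear congruences modulo $q$, and then analyze those counts through the Smith normal form of the defining coefficient matrix. Concretely, for a subarrangement $\B \subseteq \A$ let $S_\B$ be the submatrix of $S$ formed by the columns indexed by $\B$, and let $N(\B;q)$ denote the number of $\bar x \in L_q$ with $\alpha_i(x) \equiv 0 \pmod q$ for every $H_i \in \B$. Removing the nonvanishing conditions $\alpha_i(x) \not\equiv 0 \pmod q$ by inclusion--exclusion gives
\begin{align}
	\chi_\A^\quasi(q) = \sum_{\B \subseteq \A}(-1)^{\#\B}N(\B;q),
\end{align}
so it suffices to understand each $N(\B;q)$ as a function of $q$ and to control its degree.

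Next I would evaluate $N(\B;q)$ via the Smith normal form of $S_\B$. The map $\bar x \mapsto \bar x S_\B$ on $L_q \cong \mathbb{Z}_q^\ell$ is $\mathbb{Z}_q$-linear; writing $S_\B = UDV$ with $U,V$ unimodular over $\mathbb{Z}$ and $D$ diagonal carrying the elementary divisors $e_1(\B) \mid \cdots \mid e_{r}(\B)$, where $r = r(\B)$ is the rank of $S_\B$, the unimodular change of coordinates decouples the system into the $r$ congruences $e_j(\B)\,z_j \equiv 0 \pmod q$ together with $\ell - r$ unconstrained coordinates. Since $ez \equiv 0 \pmod q$ has exactly $\gcd(e,q)$ solutions in $\mathbb{Z}_q$, this gives
\begin{align}
	N(\B;q) = q^{\ell - r(\B)}\prod_{j=1}^{r(\B)}\gcd\bigl(e_j(\B),\,q\bigr).
\end{align}

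All three assertions then follow. Set $\tilde n = \operatorname{lcm}\bigl\{e_j(\B) : \B \subseteq \A,\ 1 \leq j \leq r(\B)\bigr\}$. Since $\gcd(e,q)$ depends only on $q \bmod e$ and each $e_j(\B)$ divides $\tilde n$, every factor $\gcd(e_j(\B),q)$ is constant on residue classes modulo $\tilde n$; hence on each such class $\chi_\A^\quasi$ is a polynomial in $q$, so it is a quasi-polynomial of period $\tilde n$. For the gcd-property I would invoke the associativity of the gcd: whenever $e \mid \tilde n$ we have $\gcd\bigl(e,\gcd(\tilde n, q)\bigr) = \gcd\bigl(\gcd(e,\tilde n),q\bigr) = \gcd(e,q)$, so every constituent depends on $q$ only through $\gcd(\tilde n, q)$. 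Finally, the leading behavior is dictated by the term $\B = \emptyset$, for which $r(\emptyset) = 0$ and $N(\emptyset;q) = q^\ell$; every nonempty $\B$ has $r(\B) \geq 1$ and a $\gcd$-product bounded independently of $q$, hence contributes a term of degree at most $\ell - 1$. Therefore each constituent is monic of degree $\ell$.

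The main obstacle is the Smith normal form count itself: one must verify that the decoupled congruence system has precisely $q^{\ell - r}\prod_j \gcd(e_j,q)$ solutions modulo $q$, and identify the correct period as the least common multiple of all elementary divisors over all subarrangements rather than some coarser quantity read off directly from the entries of $S$. Once this count is in hand, quasi-polynomiality, the gcd-property, and the monic degree-$\ell$ statement are all immediate consequences of the displayed formula.
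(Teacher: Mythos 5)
Your proof is correct and follows essentially the same route the paper takes: the paper cites this result from Kamiya--Takemura--Terao without reproving it, but its own proof of the equivariant generalization (Theorem \ref{Main result 1}) uses exactly your strategy of inclusion--exclusion over subsets of columns followed by the Smith-normal-form count $\Pi(S_J;q)\,q^{\ell-r}$. Your additional observations on the gcd-property via $\gcd(e,q)=\gcd(e,\gcd(\tilde n,q))$ and on the degree (only $\B=\emptyset$ contributes $q^\ell$, every nonempty $\B$ having rank at least $1$) are both sound.
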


For a subset $J \subseteq \{1,\ldots,n\}$, let $S_J$ be the submatrix of $S$ corresponding to $J$.
Define 
\begin{align}
	\tilde{n}_\A \ceq \lcm\bigset{e(J)}{J \subseteq \{1,\ldots,n\},\ J \neq \emptyset} = \lcm\bigset{e(J)}{J \subseteq \{1,\ldots,n\},\ 1 \leq \#J \leq \min\{\ell,n\}},
\end{align}
where $e(J)$ is the maximum elementary divisor of $S_J$.

\begin{proposition}[Higashitani--Tran--Yoshinaga {\cite[Theorem 1.2]{HigashitaniTranYoshinaga}}]\label{HigashitaniTranYoshinaga}
The quasi-polynomial $\chi_\A^\quasi$ has the minimum period $\tilde{n}_\A$ (Such a period is called a $\mathrm{lcm}$-$\mathrm{period}$).
\end{proposition}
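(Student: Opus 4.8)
The plan is to expand $\chi_\A^\quasi$ by inclusion--exclusion, evaluate each term through the Smith normal form to exhibit $\tilde n_\A$ as a period, and then isolate the prime-by-prime contributions to prove minimality. For a subset $J\subseteq\{1,\dots,n\}$ write $N_J(q)=\#\{\bar x\in L_q:\alpha_i(x)\equiv 0\ (\mathrm{mod}\ q)\ \text{for all}\ i\in J\}$. Since $M(\A;q)=L_q\setminus\bigcup_i H_{i,q}$, inclusion--exclusion gives
\begin{align}
\chi_\A^\quasi(q)=\sum_{J\subseteq\{1,\dots,n\}}(-1)^{\#J}N_J(q).
\end{align}
Now $N_J(q)$ is the number of solutions in $L_q\cong(\mathbb Z/q\mathbb Z)^\ell$ of the system $\tenchi S_J\,x\equiv 0$, and putting $S_J$ in Smith normal form with elementary divisors $d_1^{(J)}\mid\cdots\mid d_{r_J}^{(J)}$ (where $r_J=\mathrm{rank}\,S_J$) reduces this to the uncoupled congruences $d_i^{(J)}x_i\equiv 0$, so that
\begin{align}
N_J(q)=q^{\,\ell-r_J}\prod_{i=1}^{r_J}\gcd\bigl(d_i^{(J)},q\bigr).
\end{align}
As $q\mapsto\gcd(d,q)$ has period $d$, the product has period $\lcm_i d_i^{(J)}=d_{r_J}^{(J)}=e(J)$, and multiplying by the polynomial $q^{\ell-r_J}$ does not enlarge it; hence $\chi_\A^\quasi$ is a quasi-polynomial whose period divides $\tilde n_\A=\lcm_J e(J)$, so its minimal period divides $\tilde n_\A$.

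For minimality I would use the gcd-property (from the Kamiya--Takemura--Terao theorem). On the class $\gcd(q,\tilde n_\A)=g$ one may substitute $\gcd(d_i^{(J)},q)=\gcd(d_i^{(J)},g)$, which is legitimate because $d_i^{(J)}\mid e(J)\mid\tilde n_\A$; thus the constituent is
\begin{align}
f_g(t)=\sum_{J}(-1)^{\#J}\,t^{\,\ell-r_J}\prod_{i=1}^{r_J}\gcd\bigl(d_i^{(J)},g\bigr).
\end{align}
I claim it suffices to prove $f_{\tilde n_\A/p}\neq f_{\tilde n_\A}$ for every prime $p\mid\tilde n_\A$. Indeed, were the minimal period $m^\ast$ a proper divisor of $\tilde n_\A$, some prime would satisfy $v_p(m^\ast)<v_p(\tilde n_\A)$, whence $m^\ast\mid\tilde n_\A/p$, so $\tilde n_\A/p$ would itself be a period. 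But on the single class $q\equiv 0\pmod{\tilde n_\A/p}$ one has $\gcd(q,\tilde n_\A)=\tilde n_\A/p$ for the infinitely many $q=(\tilde n_\A/p)s$ with $p\nmid s$ and $\gcd(q,\tilde n_\A)=\tilde n_\A$ for the infinitely many $q$ with $p\mid s$; a genuine period $\tilde n_\A/p$ would then force the two constituents to agree, contradicting $f_{\tilde n_\A/p}\neq f_{\tilde n_\A}$.

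Finally I would compute the difference explicitly. Writing $v_p$ for the $p$-adic valuation, $a=v_p(\tilde n_\A)$, $D_J=\prod_i d_i^{(J)}$, and $c_J=\#\{i:v_p(d_i^{(J)})=a\}$, and using that $\gcd(d_i^{(J)},\tilde n_\A)=d_i^{(J)}$ while $\gcd(d_i^{(J)},\tilde n_\A/p)$ equals $d_i^{(J)}$ when $v_p(d_i^{(J)})<a$ and $d_i^{(J)}/p$ when $v_p(d_i^{(J)})=a$, one finds
\begin{align}
f_{\tilde n_\A}(t)-f_{\tilde n_\A/p}(t)=\sum_{J:\,v_p(e(J))=a}(-1)^{\#J}\,t^{\,\ell-r_J}\,D_J\bigl(1-p^{-c_J}\bigr),
\end{align}
the sum running exactly over the $J$ with $c_J\geq 1$. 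It then remains to show this polynomial does not vanish; I would inspect its top-degree term, the coefficient of $t^{\ell-\rho}$ with $\rho=\min\{r_J:v_p(e(J))=a\}$, arising from the minimal-rank subsets that still detect $p^{a}$. The hard part will be precisely to rule out cancellation among the signs $(-1)^{\#J}$ in this leading coefficient: each summand $D_J(1-p^{-c_J})$ is strictly positive, so the issue is purely the interplay of parities of $\#J$. I expect to resolve it by restricting to inclusion-minimal witnessing subsets and a positivity argument for the resulting signed sum, exactly the mechanism already transparent in the rank-one case, where the coefficient collapses to $-(1-p^{-1})$ times a sum of positive terms and is manifestly nonzero. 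Granting non-vanishing of the leading coefficient yields $f_{\tilde n_\A}\neq f_{\tilde n_\A/p}$ for all $p\mid\tilde n_\A$, and the reduction above then identifies $\tilde n_\A$ as the minimal period.
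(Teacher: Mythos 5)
This proposition is not proved in the paper at all: it is imported verbatim from Higashitani--Tran--Yoshinaga, so there is no in-paper argument to compare against, and your proposal must stand on its own. The first half of your argument is fine: the inclusion--exclusion expansion, the Smith-normal-form count $N_J(q)=q^{\ell-r_J}\prod_i\gcd\{d_i^{(J)},q\}$, and the conclusion that the minimal period divides $\tilde n_\A$ are all correct and standard. The reduction of minimality to the single inequality $f_{\tilde n_\A/p}\neq f_{\tilde n_\A}$ for each prime $p\mid\tilde n_\A$ is also sound (using the gcd-property and the two residues $\gcd\{q,\tilde n_\A\}\in\{\tilde n_\A/p,\ \tilde n_\A\}$ inside the class $q\equiv 0\pmod{\tilde n_\A/p}$), and your formula for the difference $f_{\tilde n_\A}-f_{\tilde n_\A/p}$ as a signed sum over the subsets $J$ with $v_p(e(J))=v_p(\tilde n_\A)$ is correct.

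The genuine gap is the last step, which you explicitly defer: showing that this signed sum is not the zero polynomial. That non-cancellation is not a routine verification --- it is the entire content of the cited theorem. Your proposed mechanism (inspect the coefficient of $t^{\ell-\rho}$ for $\rho=\min\{r_J\}$ and argue positivity) does not obviously close it: for a fixed rank $\rho$, the witnessing subsets $J$ with $r_J=\rho$ can have any cardinality $\#J\geq\rho$ (take supersets of a rank-$\rho$ witness by columns lying in its span), so the signs $(-1)^{\#J}$ genuinely alternate within the leading coefficient, and the positivity of each factor $D_J(1-p^{-c_J})$ does nothing to prevent cancellation. Even your ``manifestly nonzero'' rank-one case is an alternating sum over all subsets of a parallel class and needs an argument. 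Since period collapse really does occur for the full quasi-polynomial in closely related situations (this paper's own \cref{sec2.4} exhibits such collapse for the equivariant refinement), one cannot wave at non-cancellation; a specific structural input about the matrices $S_J$ is required, and supplying it is precisely what Higashitani--Tran--Yoshinaga's proof does. As written, the proposal establishes that $\tilde n_\A$ is \emph{a} period and correctly frames what must be shown for minimality, but does not prove minimality.
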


Let $\GAMMA$ be a finite group acting linearly on $L$.
In other words, the group homomorphism $\rho : \GAMMA \lra \GL(L)$ is given.
In this paper, suppose that $\rho$ is injective.
Then the $\GAMMA$-action $\rho$ induces $\GAMMA$-actions $\rho_q$ on $L_q$ and $\rho^\vee$ on $L^\vee$, respectively, satisfying
\begin{align}
	\rho_q(\gamma) \circ \pi_q = \pi_q \circ\rho(\gamma),\qquad \rho^\vee(\gamma)(\alpha) = \alpha \circ \rho(\gamma)^{-1}
\end{align}
for $\gamma \in \GAMMA$ and $\alpha \in L^\vee$, where $\pi_q : L \lra L_q$ is the natural projection.

A hyperplane arrangement $\A$ defined over $L$ is \textit{$\GAMMA$-invariant} if $\rho(\gamma)(H_i) \in \A$ for any $\gamma \in \GAMMA$ and $H_i \in \A$, that is, $\rho^\vee(\gamma)\bigl(\{\pm \alpha_1,\ldots,\pm\alpha_n\}\bigr) = \{\pm \alpha_1,\ldots,\pm \alpha_n\}$ for any $\gamma \in \GAMMA$.

\begin{lemma}
	For each $q \in \mathbb{Z}_{>0}$, the complement $M(\A;q)$ is invariant under $\GAMMA$.
\end{lemma}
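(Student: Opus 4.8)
The plan is to show that for each $\gamma \in \GAMMA$, the action $\rho_q(\gamma)$ maps $M(\A;q)$ bijectively onto itself. Since $\rho_q(\gamma)$ is already a bijection of $L_q$ (as $\rho(\gamma) \in \GL(L)$ induces an automorphism of $L_q = L/qL$), it suffices to prove that $\rho_q(\gamma)$ preserves the defining condition; that is, $\bar{x} \in M(\A;q)$ if and only if $\rho_q(\gamma)(\bar{x}) \in M(\A;q)$. Because the condition defining $M(\A;q)$ is $\alpha_i(x) \not\equiv 0 \pmod{q}$ for all $i$, the heart of the matter is to understand how the quantities $\alpha_i(x) \bmod q$ transform under the action.

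First I would compute $\alpha_i\bigl(\rho(\gamma)(x)\bigr)$ using the definition of the induced dual action. By the formula $\rho^\vee(\gamma)(\alpha) = \alpha \circ \rho(\gamma)^{-1}$, we have
\begin{align}
	\alpha_i\bigl(\rho(\gamma)(x)\bigr) = \bigl(\alpha_i \circ \rho(\gamma)\bigr)(x) = \Bigl(\rho^\vee(\gamma^{-1})(\alpha_i)\Bigr)(x).
\end{align}
Now I invoke the $\GAMMA$-invariance hypothesis: since $\A$ is $\GAMMA$-invariant, $\rho^\vee(\gamma^{-1})\bigl(\{\pm\alpha_1,\ldots,\pm\alpha_n\}\bigr) = \{\pm\alpha_1,\ldots,\pm\alpha_n\}$, so there is an index $j$ and a sign $\varepsilon \in \{\pm 1\}$ with $\rho^\vee(\gamma^{-1})(\alpha_i) = \varepsilon\,\alpha_j$. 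Therefore $\alpha_i\bigl(\rho(\gamma)(x)\bigr) = \varepsilon\,\alpha_j(x)$. Since multiplication by $\varepsilon = \pm 1$ preserves the property of being $\not\equiv 0 \pmod q$, this gives exactly the equivalence $\alpha_i\bigl(\rho(\gamma)(x)\bigr) \equiv 0 \pmod q \iff \alpha_j(x) \equiv 0 \pmod q$. The key observation is that as $i$ ranges over $\{1,\ldots,n\}$, the corresponding index $j$ ranges over a permutation of $\{1,\ldots,n\}$ (this is induced by the permutation action of $\gamma$ on $\A$), so the \emph{full} conjunction ``$\alpha_i(x)\not\equiv 0$ for all $i$'' is preserved under the action.

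I would also verify that the action on $L_q$ is well defined on these congruence conditions, i.e.\ that $\alpha_i(x) \bmod q$ depends only on $\bar{x} = \pi_q(x)$ and not on the lift $x$; this is immediate since $\alpha_i(qL) \subseteq q\mathbb{Z}$, and the compatibility $\rho_q(\gamma)\circ\pi_q = \pi_q\circ\rho(\gamma)$ ensures the computation descends to $L_q$ consistently. The main (and only mildly subtle) obstacle is the bookkeeping of signs and the index permutation: one must be careful that the invariance is stated up to sign, so a single $\alpha_i$ may be sent to $-\alpha_j$ rather than $\alpha_j$, but since the hyperplane $H_{j,q}$ is defined by $\alpha_j(x)\equiv 0$ and $-\alpha_j(x)\equiv 0$ cut out the same set in $L_q$, the sign is harmless. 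Assembling these observations yields $\rho_q(\gamma)\bigl(M(\A;q)\bigr) = M(\A;q)$, completing the proof.
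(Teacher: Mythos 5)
Your proposal is correct and follows essentially the same route as the paper: both rest on the identity $\alpha_i\bigl(\rho(\gamma)(x)\bigr) = \bigl(\rho^\vee(\gamma^{-1})(\alpha_i)\bigr)(x) = \pm\alpha_j(x)$ together with the $\GAMMA$-invariance of $\{\pm\alpha_1,\ldots,\pm\alpha_n\}$, so that the non-vanishing conditions modulo $q$ are permuted rather than destroyed. The extra remarks you include (well-definedness on $L_q$ and bijectivity of $\rho_q(\gamma)$) are fine but not needed beyond what the paper records.
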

\begin{proof}
	Let $\bar{x} \in M(\A;q)$.
	Then, $\alpha_i(x) \not\equiv 0 \pmod{q}$ for $i \in \{1,\ldots,n\}$.
	For any $i \in \{1,\ldots,n\}$ and $\gamma \in \GAMMA$, there exists $j \in \{1,\ldots,n\}$ such that $\rho^\vee(\gamma^{-1})(\alpha_i) = \pm\alpha_j$.
	Since
	\begin{align}
		\alpha_i\bigl(\rho(\gamma)(x)\bigr) = \bigl(\rho^\vee(\gamma^{-1})(\alpha_i)\bigr)(x) = \pm\alpha_j(x),
	\end{align}
	we have $\alpha_i\bigl(\rho(\gamma)(x)\bigr) \not \equiv 0 \pmod{q}$.
	Hence $\rho_q(\gamma)(\bar{x}) \in M(\A;q)$.
\end{proof}

Let $\chi_{\A,q}$ denote the permutation representation of $\GAMMA$ on $M(\A;q)$.
For $\gamma \in\GAMMA$, we have 
\begin{align}
	\chi_{\A,q}(\gamma) = \#M(\A;q)^\gamma = \#\bigset{\bar{x} \in M(\A;q)}{\rho_q(\gamma)(\bar{x}) = \bar{x}}.
\end{align}
In particular, $\chi_{\A,q}(1) = \chi_\A^\quasi(q)$, where $1$ is the identity element of $\GAMMA$.
In this sense, the permutation representation $\chi_{\A,q}$ is a generalization of the characteristic quasi-polynomial of $\A$.

\subsection{Quasi-polynomiality}\label{sec2.3}
\ \\*
In this section, we prove that the permutation character $\chi_{\A,q}$ is a quasi-polynomial in $q$.
First, we give the following result on quasi-polynomials.

\begin{lemma}\label{lemer}
	Let $f_1,\ldots,f_k$ be quasi-polynomials and $\tilde{n}_i$ be a period of $f_i$.
	Then the sum $F \ceq f_1 + \cdots + f_k$ is a quasi-polynomial with a period $\lcm\{\tilde{n}_1,\ldots,\tilde{n}_k\}$.
	Furthermore, if $f_1,\ldots,f_k$ have the gcd-property, then $F$ also has the gcd-property.
\end{lemma}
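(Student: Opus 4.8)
The plan is to pass to a common period and then verify both assertions directly from the definitions, the only genuine input being an elementary divisibility identity.

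First I would record that any positive multiple of a period is again a period: if $\tilde{n}$ is a period of a quasi-polynomial $f$ with constituents indexed modulo $\tilde{n}$, and $\tilde{n} \mid N$, then $N$ is also a period, the constituent of $f$ on the residue class $r \bmod N$ being the original constituent attached to $r \bmod \tilde{n}$. Accordingly, set $N \ceq \lcm\{\tilde{n}_1,\ldots,\tilde{n}_k\}$. Each $f_i$ then admits $N$ as a period; denote by $f_i^{(r)} \in R[t]$ its constituent on the class $r \bmod N$. For $z \equiv r \pmod{N}$ we have $F(z) = \sum_{i=1}^k f_i(z) = \sum_{i=1}^k f_i^{(r)}(z)$, so defining $F^{(r)} \ceq \sum_{i=1}^k f_i^{(r)}$ exhibits $F$ as a quasi-polynomial with period $N = \lcm\{\tilde{n}_1,\ldots,\tilde{n}_k\}$. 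This settles the first assertion.

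For the gcd-property, assume each $f_i$ has it relative to $\tilde{n}_i$. The crux is to check that passing to the larger period $N$ preserves the gcd-property of each $f_i$, and for this I would establish the identity $\gcd\{m, r\} = \gcd\{m, \gcd\{N, r\}\}$, valid whenever $m \mid N$ (both sides have the same set of common divisors, as one verifies by mutual divisibility using $m \mid N$). Applying this with $m = \tilde{n}_i$, if $\gcd\{N, r_1\} = \gcd\{N, r_2\}$ then $\gcd\{\tilde{n}_i, r_1\} = \gcd\{\tilde{n}_i, r_2\}$, hence $f_i^{(r_1)} = f_i^{(r_2)}$ by hypothesis. Summing over $i$ gives $F^{(r_1)} = F^{(r_2)}$, so $F$ has the gcd-property with respect to $N$.

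No step presents a real obstacle; the argument is entirely formal once the period is made common. The single point requiring care is the number-theoretic identity above, which is precisely what ensures that enlarging the period from $\tilde{n}_i$ to $N$ does not spoil the gcd-property — a subtlety easy to overlook, since the gcd-property is stated relative to a fixed period and must be transported to the new period $N$ before the constituents can be summed.
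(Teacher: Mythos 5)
Your proof is correct and follows essentially the same route as the paper: pass to the common period $N = \lcm\{\tilde{n}_1,\ldots,\tilde{n}_k\}$ and take the $r$-th constituent of $F$ to be the sum of the corresponding constituents of the $f_i$. You are in fact slightly more careful than the paper, which leaves the gcd-property claim implicit, whereas you supply the needed identity $\gcd\{m,r\} = \gcd\{m,\gcd\{N,r\}\}$ for $m \mid N$ that transports the gcd-property from each $\tilde{n}_i$ to $N$.
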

\begin{proof}
	Let $\tilde{n} \ceq \lcm\{\tilde{n}_1,\ldots,\tilde{n}_k\}$.
	Let $f_j^{(r)}$ denote the $r$-th constituent of $f_j$.
	Then, for $r \in \{1,\ldots,\tilde{n}\}$,
	\begin{align}
		F^{(r)} = f_1^{(r_1)} + \cdots + f_k^{(r_k)}
	\end{align}
	becomes the $r$-th constituent of $F$,
	where $r_j$ satisfies $1 \leq r_j \leq \tilde{n}_j$ and $r_j \equiv r \pmod{\tilde{n}_j}$ for each $j \in \{1,\ldots,k\}$.
\end{proof}

For $\gamma \in \GAMMA$, let $R_\gamma$ be the representation matrix of $\rho(\gamma)$ with respect to $B$, that is,
\begin{align}
	\rho(\gamma)(x) = xR_\gamma
\end{align}
for $x \in L$.
Then we have 
\begin{align}
	\chi_{\A,q}(\gamma) = \#\bigset{\bar{x} \in M(\A;q)}{\bar{x}[R_\gamma-I]_q = 0}, \label{pc}
\end{align}
where $I$ is the identity matrix and $[R_\gamma-I]_q$ is the $q$-reduction of the matrix $R_\gamma-I$.

In general, let $A$ be an $\ell \times m$ integer matrix and $d_1,\ldots,d_r$ denote the elementary divisors of $A$, where $d_1 \mid d_2 \mid \cdots \mid d_r$ and $r = \rank{A}$.
Then it is known in {\cite[Lemma 2.1]{KamiyaTakemuraTerao}} and {\cite[Lemma 2.1]{UchiumiYoshinaga}} that the kernel of the $q$-reduction $[A]_q$ is a quasi-monomial in $q$ with the gcd-property and the minimum period $d_r$.
More explicitly, we have
\begin{align}
	\#\bigset{z \in \mathbb{Z}^\ell_q}{z[A]_q = 0} = \Pi(A;q) \cdot q^{\ell-r},
\end{align}
where $\Pi(A;q)$ is a periodic function defined by
\begin{align}
	\Pi(A;q) \ceq \prod_{j=1}^r\gcd\{d_j,q\}.
\end{align}

For $\gamma \in \GAMMA \setminus \{1\}$, let $d(\gamma)$ denote the maximum elementary divisor of $R_\gamma-I$.
Define 
\begin{align}
	\tilde{n}_\GAMMA \ceq \lcm\bigset{d(\gamma)}{\gamma \in \GAMMA \setminus \{1\}}.
\end{align}

Now, we prove that the permutation character $\chi_{\A,q}$ is a quasi-polynomial in $q$.
When $\A$ is an empty arrangement, the author and Yoshinaga have already proved that in \cite{UchiumiYoshinaga}.

\begin{theorem}[{\cite[Theorem 1.3]{UchiumiYoshinaga}}]\label{UchiumiYoshinaga}
	The following statements hold:
	\begin{itemize}
		\item The permutation character $\chi_{\emptyset,q}$ is a quasi-polynomial in $q$.
		\item $\chi_{\emptyset,q}$ has the gcd-property.
		\item The minimum period of $\chi_{\emptyset,q}$ is $\tilde{n}_\GAMMA$.
	\end{itemize}
\end{theorem}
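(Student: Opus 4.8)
The plan is to exploit the fact that for the empty arrangement the complement is the whole of $L_q$, so that $\chi_{\emptyset,q}(\gamma)$ is just the number of fixed points of $\rho_q(\gamma)$ acting on $L_q$. By \eqref{pc} this equals $\#\bigset{\bar{x} \in L_q}{\bar{x}[R_\gamma - I]_q = 0}$, the cardinality of the kernel of the $q$-reduction of the integer matrix $R_\gamma - I$. The whole theorem should then drop out by applying the elementary-divisor count recalled above to each group element separately and assembling the results into a class-function-valued quasi-polynomial.

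First I would fix $\gamma \in \GAMMA$ and let $d_1 \mid \cdots \mid d_{r_\gamma}$ be the elementary divisors of $R_\gamma - I$, with $r_\gamma = \rank(R_\gamma - I)$. The cited lemma (\cite[Lemma 2.1]{KamiyaTakemuraTerao}, \cite[Lemma 2.1]{UchiumiYoshinaga}) then gives directly
\[
	\chi_{\emptyset,q}(\gamma) = \Pi(R_\gamma - I; q)\,q^{\ell - r_\gamma} = \Biggl(\prod_{j=1}^{r_\gamma}\gcd\{d_j, q\}\Biggr)q^{\ell - r_\gamma},
\]
which is a quasi-polynomial in $q$ with the gcd-property whose minimum period is $d(\gamma) = d_{r_\gamma}$. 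For $\gamma = 1$ we have $R_1 - I = 0$, hence $\chi_{\emptyset,q}(1) = q^\ell$, a genuine polynomial of period $1$. Since a character is determined by its values on group elements, this already shows that $\chi_{\emptyset,q}$ is a quasi-polynomial in $q$ whose coefficients are class functions of $\GAMMA$.

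Next I would upgrade these componentwise statements to the common modulus $\tilde{n}_\GAMMA = \lcm\bigset{d(\gamma)}{\gamma \in \GAMMA \setminus \{1\}}$. Because $d(\gamma) \mid \tilde{n}_\GAMMA$ for every $\gamma$, each factor satisfies $\gcd\{d_j, q\} = \gcd\{d_j, \gcd\{\tilde{n}_\GAMMA, q\}\}$, so every constituent of $\chi_{\emptyset,q}(\gamma)$ depends on the residue of $q$ only through $\gcd\{\tilde{n}_\GAMMA, q\}$; this is exactly the gcd-property relative to the period $\tilde{n}_\GAMMA$, and it is inherited by the full character. By the same lcm-of-periods bookkeeping as in \cref{lemer}, $\tilde{n}_\GAMMA$ is therefore a period of $\chi_{\emptyset,q}$.

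The last and most delicate point is minimality. Any period $\tilde{m}$ of $\chi_{\emptyset,q}$ is in particular a period of each evaluation $q \mapsto \chi_{\emptyset,q}(\gamma)$; but $d(\gamma)$ is the \emph{minimum} period of that evaluation, so $d(\gamma) \mid \tilde{m}$ for all $\gamma \neq 1$, whence $\tilde{n}_\GAMMA \mid \tilde{m}$. Together with the fact that $\tilde{n}_\GAMMA$ is already a period, this pins the minimum period down to $\tilde{n}_\GAMMA$. The hard part is not any single estimate but keeping the bookkeeping honest: the minimality claim rests squarely on the sharp minimum-period assertion in the elementary-divisor lemma, together with the elementary observation that the period of a character-valued quasi-polynomial is the least common multiple of the periods of its pointwise evaluations.
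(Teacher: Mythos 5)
Your argument is correct and is essentially the intended one: the paper does not reprove this result (it imports it from \cite{UchiumiYoshinaga}), but your derivation is exactly the specialization of the paper's proof of \cref{Main result 1} to the single term $J=\emptyset$, namely $\chi_{\emptyset,q}(\gamma)=\Pi(R_\gamma-I;q)\,q^{\ell-r(\gamma)}$ via the kernel-counting lemma, assembled over $\gamma$ as in \cref{lemer}. The minimality step is also handled the way the source does, resting on the sharp ``minimum period $d_r$'' clause of \cite[Lemma 2.1]{UchiumiYoshinaga} together with the standard fact that the minimum period of a quasi-polynomial divides every period (and that any period of the character-valued function is a period of each pointwise evaluation).
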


Let $\A$ be an arbitrary $\GAMMA$-invariant arrangement defined over $L$.
The following theorem is one of the main results in this paper.

\begin{theorem}[Restatement of \cref{thm1.1}]\label{Main result 1}
	The permutation character $\chi_{\A,q}$ is a quasi-polynomial in $q$.
	Furthermore, $\chi_{\A,q}$ has the gcd-property.
\end{theorem}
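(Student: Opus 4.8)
The plan is to fix an element $\gamma \in \GAMMA$ and show that $q \mapsto \chi_{\A,q}(\gamma)$ is a quasi-polynomial with the gcd-property. Since $\GAMMA$ is finite and $\chi_{\A,q}$ is a class function, a character-valued function of $q$ is by definition a quasi-polynomial with the gcd-property precisely when each of its finitely many values $\chi_{\A,q}(\gamma)$ is, so this suffices. Starting from \eqref{pc}, write $F_\gamma \ceq \bigset{\bar x \in L_q}{\bar x[R_\gamma - I]_q = 0}$ for the set of $\gamma$-fixed points of $L_q$, so that $\chi_{\A,q}(\gamma) = \#\bigl(F_\gamma \setminus \bigcup_{i=1}^n H_{i,q}\bigr)$. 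Inclusion--exclusion over the subsets $\B \subseteq \A$ then gives
\begin{align}
	\chi_{\A,q}(\gamma) = \sum_{\B \subseteq \A}(-1)^{\#\B}\#\Bigl(F_\gamma \cap \bigcap_{H_i \in \B}H_{i,q}\Bigr),
\end{align}
with the empty intersection understood as $F_\gamma$ itself.

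The key step is that each summand is again the number of mod $q$ solutions of an integral linear system. Indeed, $\bar x$ lies in $F_\gamma \cap \bigcap_{H_i \in \B}H_{i,q}$ exactly when $\bar x[R_\gamma - I]_q = 0$ and $\bar x[S_\B]_q = 0$ hold simultaneously, i.e. when $\bar x[A_{\gamma,\B}]_q = 0$ for the $\ell \times (\ell + \#\B)$ integer matrix
\begin{align}
	A_{\gamma,\B} \ceq \bigl(R_\gamma - I \mid S_\B\bigr)
\end{align}
formed by horizontally concatenating $R_\gamma - I$ (integral, since $\rho(\gamma) \in \GL(L) \cong \GL_\ell(\mathbb{Z})$) with the submatrix $S_\B$. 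The kernel-counting formula of \cite[Lemma 2.1]{KamiyaTakemuraTerao} and \cite[Lemma 2.1]{UchiumiYoshinaga} applied to $A_{\gamma,\B}$ then yields
\begin{align}
	\#\Bigl(F_\gamma \cap \bigcap_{H_i \in \B}H_{i,q}\Bigr) = \Pi(A_{\gamma,\B};q)\cdot q^{\,\ell - \rank A_{\gamma,\B}},
\end{align}
which is a quasi-polynomial in $q$ with the gcd-property (of minimum period the maximum elementary divisor of $A_{\gamma,\B}$).

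Hence $\chi_{\A,q}(\gamma)$ is a signed sum of finitely many quasi-polynomials each enjoying the gcd-property, so by \cref{lemer} it is itself such a quasi-polynomial; ranging over $\gamma \in \GAMMA$ proves the theorem. I expect the only substantive point to be the identification in the second paragraph: recognizing that imposing the $\gamma$-fixed condition together with the vanishing conditions on $\B$ collapses into the single mod $q$ kernel condition $\bar x[A_{\gamma,\B}]_q = 0$, which lets the established counting machinery apply verbatim; the remainder is the bookkeeping of inclusion--exclusion and the closure of the quasi-polynomial and gcd properties under finite sums recorded in \cref{lemer}. As a consistency check, for $\gamma = 1$ one has $R_\gamma - I = 0$, so $A_{1,\B}$ contributes no constraints beyond $S_\B$ and the formula recovers the classical inclusion--exclusion expression for $\chi_\A^\quasi(q) = \chi_{\A,q}(1)$.
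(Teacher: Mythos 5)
Your proposal is correct and follows essentially the same route as the paper: fix $\gamma$, apply inclusion--exclusion over subsets of $\A$, recognize each term as the kernel count of the concatenated matrix $(R_\gamma - I \mid S_J)$ (your $A_{\gamma,\B}$ is exactly the paper's $S_{\gamma,J}$), invoke the elementary-divisor counting lemma, and close with \cref{lemer}. The only cosmetic difference is that you pass from pointwise quasi-polynomiality to the character-valued statement directly, while the paper expands $\chi_{\A,q}$ in the basis of irreducible characters via inner products; both are equivalent.
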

\begin{proof}
	We prove that $\chi_{\A,q}(\gamma)$ is a quasi-polynomial in $q$ and it has gcd-property for each $\gamma \in \GAMMA$.
	Then, for each irreducible character $\chi_i$,
	\begin{align}
		(\chi_i,\,\chi_{\A,{L_q}}) = \dfrac{1}{\#\GAMMA}\sum_{\gamma \in \GAMMA}\chi_i(\gamma)\chi_{\A,q}(\gamma)
	\end{align}
	is also a quasi-polynomial with gcd-property.
	Hence we can see that
	\begin{align}
		\chi_{\A,q} = (\chi_1,\,\chi_{\A,{L_q}})\chi_1 + \cdots + (\chi_k,\,\chi_{\A,{L_q}})\chi_k
	\end{align}
	is a quasi-polynomial with gcd-property.
	
	The equation \cref{pc} implies that 
	\begin{align}
		\chi_{\A,q}(\gamma) &= \#\bigset{\bar{x} \in M(\A;q)}{\bar{x}[R_\gamma - I]_q = 0}\\
		&= \#\bigset{\bar{x} \in L_q}{x(R_\gamma -I) \equiv 0,\ \ \alpha_i(x) \not\equiv 0 \pmod{q} \tforall i \in \{1,\ldots,n\}}\\
		&= \#\bigset{\bar{x} \in L_q}{x(R_\gamma -I) \equiv 0,\ \ x \tenchi s_i \not\equiv 0 \pmod{q} \tforall i \in \{1,\ldots,n\}}.
	\end{align}
	By the inclusion-exclusion principle, we have
	\begin{align}
		\chi_{\A,q}(\gamma) &= \#\bigset{\bar{x}\in L_q}{x(R_\gamma -I) \equiv 0,\ \ x \tenchi s_i \not\equiv 0 \pmod{q} \tforall i \in \{1,\ldots,n\}}\\
		&= \sum_{J \subseteq \{1,\ldots,n\}}(-1)^{\#J}\#\bigset{\bar{x} \in L_q}{x(R_\gamma -I) \equiv 0,\ \ x \tenchi s_j \equiv 0 \pmod{q} \tforall j \in J}\\
		&= \sum_{J \subseteq \{1,\ldots,n\}}(-1)^{\#J}\#\bigset{\bar{x} \in L_q}{\bar{x}[S_{\gamma,J}]_q = 0},
	\end{align}
	where $S_{\gamma,J} = \begin{pmatrix}
		R_\gamma - I & S_J
	\end{pmatrix}$ is a matrix partitioned into two matrices $R_\gamma-I$ and $S_J$.
	Moreover, using the theory of elementary divisors, the above equation can be represented as 
	\begin{align}
		\chi_{\A,q}(\gamma)
		= \sum_{J \subseteq \{1,\ldots,n\}}(-1)^{\#J}\Pi(S_{\gamma,J};q)  q^{\ell-r(\gamma,J)}, \label{sumqp}
	\end{align}
	where $r(\gamma,J) \ceq \rank{S_{\gamma,J}}$.
	Since the equation \cref{sumqp} is a sum of quasi-polynomials with gcd-property, then $\chi_{\A,q}$ is also a quasi-polynomial with gcd-property.
\end{proof}

\subsection{Periods and Examples}\label{sec2.4}
\ \\*
In this section, we consider a period of $\chi_{\A,q}$.
Let $d(\gamma,J)$ denote the maximum elementary divisor of $S_{\gamma,J} = \begin{pmatrix}
	R_\gamma - I & S_J
\end{pmatrix}$ for $\gamma \in \GAMMA$ and $J \subseteq \{1,\ldots,n\}$, where we consider $d(1,\emptyset) = 1$.
From the equation \cref{sumqp} and the elementary result (\cref{lemer}), we can find a period, though not necessarily the minimum period.

\begin{proposition}
	The quasi-polynomial $\chi_{\A,q}$ has a period
	\begin{align}
		\tilde{N} \ceq \lcm\bigset{d(\gamma,J)}{\gamma \in \GAMMA,\ J \subseteq \{1,\ldots,n\}}.
	\end{align}
\end{proposition}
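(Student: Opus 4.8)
The plan is to read the period off directly from the explicit expression \cref{sumqp}, combining the known period of each individual summand with the elementary result \cref{lemer}. Recall that for each fixed $\gamma \in \GAMMA$ the equation \cref{sumqp} reads
\begin{align*}
	\chi_{\A,q}(\gamma) = \sum_{J \subseteq \{1,\ldots,n\}}(-1)^{\#J}\Pi(S_{\gamma,J};q)\,q^{\ell-r(\gamma,J)}.
\end{align*}
The first step is to pin down the period of a single summand. By the elementary-divisor lemma (\cite[Lemma 2.1]{KamiyaTakemuraTerao}, \cite[Lemma 2.1]{UchiumiYoshinaga}), the function $q \mapsto \Pi(S_{\gamma,J};q)\,q^{\ell-r(\gamma,J)}$ is exactly the counting function $\#\bigset{\bar{x} \in L_q}{\bar{x}[S_{\gamma,J}]_q = 0}$, which is a quasi-polynomial with minimum period equal to the maximal elementary divisor $d(\gamma,J)$ of $S_{\gamma,J}$. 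Indeed, $\Pi(S_{\gamma,J};q)=\prod_{j}\gcd\{d_j,q\}$ is periodic of minimum period $d(\gamma,J)$, and multiplying by the monomial $q^{\ell-r(\gamma,J)}$ leaves the period unchanged.

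Next I would sum over $J$. By \cref{lemer}, for each fixed $\gamma$ the right-hand side of \cref{sumqp} is a quasi-polynomial in $q$ with period $\lcm\bigset{d(\gamma,J)}{J \subseteq \{1,\ldots,n\}}$. Since any multiple of a period is again a period, and $\tilde{N}$ is by definition a common multiple of all the integers $d(\gamma,J)$, each of the finitely many scalar functions $q \mapsto \chi_{\A,q}(\gamma)$, for $\gamma \in \GAMMA$, is a quasi-polynomial with period $\tilde{N}$.

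Finally I would upgrade this pointwise periodicity to periodicity of the character-valued quasi-polynomial $\chi_{\A,q}$ itself. As in the proof of \cref{Main result 1}, I would decompose $\chi_{\A,q} = \sum_{i=1}^{k}(\chi_i,\chi_{\A,{L_q}})\chi_i$ in the orthonormal basis of irreducible characters; each coefficient $(\chi_i,\chi_{\A,{L_q}}) = \frac{1}{\#\GAMMA}\sum_{\gamma \in \GAMMA}\chi_i(\gamma)\chi_{\A,q}(\gamma)$ is a finite $\mathbb{C}$-linear combination of the quasi-polynomials $q \mapsto \chi_{\A,q}(\gamma)$, hence again a quasi-polynomial with period $\tilde{N}$. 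Therefore $\chi_{\A,q}$ is a quasi-polynomial with period $\tilde{N}$, which is the assertion.

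There is no serious obstacle here: the statement is essentially bookkeeping on top of the already established \cref{Main result 1}. The one point deserving a little care is the last step, namely ensuring that a common period of all the scalar quasi-polynomials $\chi_{\A,q}(\gamma)$ is genuinely a period of the class-function-valued quasi-polynomial $\chi_{\A,q}$; this is immediate once $\chi_{\A,q}$ is expanded in the irreducible characters. I would not expect $\tilde{N}$ to be the minimum period in general, and the proposition accordingly asserts only that it is \emph{a} period.
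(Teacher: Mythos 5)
Your argument is correct and follows the paper's own (implicit) reasoning exactly: the paper likewise obtains this period by combining the expression \cref{sumqp} for each $\chi_{\A,q}(\gamma)$ with \cref{lemer}, the individual summands having period $d(\gamma,J)$ by the elementary-divisor lemma. Your extra care in passing from the scalar functions $\chi_{\A,q}(\gamma)$ to the character-valued quasi-polynomial via the irreducible-character expansion is the same device already used in the proof of \cref{Main result 1}, so nothing here departs from the paper.
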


In previous sections, we have defined 
\begin{align}
	\tilde{n}_\A = \lcm\bigset{e(J)}{J \subseteq \{1,\ldots,n\},\ J \neq \emptyset},\quad 
	\tilde{n}_\GAMMA = \lcm\bigset{d(\gamma)}{\gamma \in \GAMMA\setminus\{1\}},
\end{align}
where $e(J)$ (resp. $d(\gamma)$) is the maximum elementary divisor of $S_J$ (resp. $R_\gamma - I$).
Recall that $\tilde{n}_\A$ is the minimum period of the characteristic quasi-polynomial $\chi_\A^\quasi$ (see \cref{HigashitaniTranYoshinaga}) and $\tilde{n}_\GAMMA$ is the minimum period of the permutation character $\chi_{\emptyset,L_q}$ (see \cref{UchiumiYoshinaga}).
Furthermore, $\tilde{n}_\A$ and $\tilde{n}_\GAMMA$ divide $\tilde{N}$, respectively.

\begin{proposition}
	A period of $\chi_{\A,q}$ is divisible by $\tilde{n}_\A$.
\end{proposition}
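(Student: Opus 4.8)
The plan is to reduce the statement to the behaviour of $\chi_{\A,q}$ at the identity element of $\GAMMA$ and then to invoke the minimality of $\tilde{n}_\A$ as a period of the ordinary characteristic quasi-polynomial. Concretely, I would let $\tilde{N}$ be an arbitrary period of the quasi-polynomial $\chi_{\A,q}$ (which is indeed a quasi-polynomial by \cref{Main result 1}), with character-valued constituents $F^{(1)},\ldots,F^{(\tilde{N})}$. Evaluating each constituent at $\gamma = 1$ produces ordinary polynomials $F^{(r)}(1) \in \mathbb{C}[t]$, and since $\chi_{\A,q}(1) = \chi_\A^\quasi(q)$, these polynomials exhibit $\tilde{N}$ as a period of $\chi_\A^\quasi$. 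Thus the task reduces to showing that every period of $\chi_\A^\quasi$ is divisible by its minimum period $\tilde{n}_\A$ (\cref{HigashitaniTranYoshinaga}).

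The remaining point is the general fact that, for any quasi-polynomial, the minimum period divides every period, which I would record as a short lemma. The key step is that the set of periods is closed under greatest common divisors: if $p$ and $q$ are periods of a quasi-polynomial $F$ and $g = \gcd\{p,q\}$, then $g$ is again a period. To prove this I would write $g\mathbb{Z} = p\mathbb{Z} + q\mathbb{Z}$ and let $\phi_p(s)$ (resp.\ $\phi_q(s)$) denote the polynomial constituent of $F$ on the residue class $s$ modulo $p$ (resp.\ $q$). For residues $s_1 \equiv s_2 \pmod{g}$, the system $w \equiv s_1 \pmod{p}$, $w \equiv s_2 \pmod{q}$ is solvable, and comparing $F$ along the progression $w + pq\mathbb{Z}$ forces $\phi_p(s_1) = \phi_q(s_2)$; combined with the analogous identity $\phi_p(s_2) = \phi_q(s_2)$ (obtained by comparing $F$ along $s_2 + pq\mathbb{Z}$) this yields $\phi_p(s_1) = \phi_p(s_2)$. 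Hence $\phi_p$ depends only on the residue modulo $g$, so $g$ is a period. Applying this with $p = \tilde{N}$ and $q = \tilde{n}_\A$ gives a period $g = \gcd\{\tilde{N},\tilde{n}_\A\} \leq \tilde{n}_\A$, and minimality of $\tilde{n}_\A$ forces $g = \tilde{n}_\A$, i.e.\ $\tilde{n}_\A \mid \tilde{N}$.

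I expect no genuine obstacle here; the computational content is routine. The only delicate point to watch is the comparison of polynomials along the arithmetic progressions $w + pq\mathbb{Z}$: one must ensure that each such progression meets $\mathbb{Z}_{>0}$ in infinitely many points, so that equality of finitely many values can be promoted to equality of the polynomials themselves. This is automatic since $pq > 0$. Once gcd-closure of the set of periods is established, the proposition follows immediately from the identity $\chi_{\A,q}(1) = \chi_\A^\quasi(q)$ together with \cref{HigashitaniTranYoshinaga}.
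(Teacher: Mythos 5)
Your argument is correct, but it takes a genuinely different route from the paper. Both proofs share the same reduction: evaluate at the identity, use $\chi_{\A,q}(1) = \chi_\A^\quasi(q)$ to see that any period $\tilde{N}$ of $\chi_{\A,q}$ is a period of $\chi_\A^\quasi$, and then invoke the minimality of $\tilde{n}_\A$. Where you diverge is in how divisibility is extracted from minimality. The paper argues by contradiction using the \emph{gcd-property}: it sets $r_0 = \gcd\{\tilde{N},\tilde{n}_\A\}$ and plays the gcd-property of $\chi_{\A,q}$ (which forces the constituents at the residues $r_0$ and $\tilde{n}_\A$ to agree) against the gcd-property and minimality of $\tilde{n}_\A$ for $\chi_\A^\quasi$ (which forces them to differ when $r_0 \neq \tilde{n}_\A$). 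You instead prove the purely combinatorial lemma that the set of periods of \emph{any} quasi-polynomial is closed under gcd --- via the solvability of the simultaneous congruences $w \equiv s_1 \pmod{p}$, $w \equiv s_2 \pmod{q}$ when $s_1 \equiv s_2 \pmod{\gcd\{p,q\}}$, and comparison of polynomials along the infinite progression $w + pq\mathbb{Z}$ --- so that the minimum period divides every period. Your route makes no use of the gcd-property at all and is therefore more general and, to my mind, more transparent; it also sidesteps the slightly delicate step in the paper where constituents indexed modulo $\tilde{N}$ must be compared against the gcd-property taken modulo $\tilde{n}_\A$, two moduli that need not divide one another. The paper's proof is shorter only because the gcd-property is already in hand from \cref{Main result 1}. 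Both are valid.
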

\begin{proof}
	Let $r_0 \ceq \gcd\{\tilde{n},\tilde{n}_\A\}$.
	Assume that $r_0 \neq \tilde{n}_\A$.
	Let $\tilde{n}$ be a period of $\chi_{\A,q}$, and let $f^{(1)},\ldots,f^{(\tilde{n})}$ be the constituents of $\chi_{\A,q}$.
	Note that $f^{(r)}(q)$ is a function on $\GAMMA$ for each $r \in \{1,\ldots,\tilde{n}\}$ and $q \in \mathbb{Z}_{>0}$.
	Define $g^{(r)}_\gamma(q) \ceq f^{(r)}(q)(\gamma)$.
	Then $g^{(r)}_\gamma$ is considered as a polynomial in $q$ for each $r$.
	Since $\chi_{\A,q}$ has the gcd-property, $\gcd\{\tilde{n},r_1\} = \gcd\{\tilde{n},r_2\}$ implies that $g_\gamma^{(r_1)} = g_\gamma^{(r_2)}$.
	In particular, we have $g_\gamma^{(r_0)} = g_\gamma^{(\tilde{n}_\A)}$.
	For each $q \in \mathbb{Z}_{>0}$, if $\gcd\{\tilde{n},q\} = r$, then 
	\begin{align}
		\chi_\A^\quasi(q) = \chi_{\A,q}(1) = f^{(r)}(q)(1) = g_1^{(r)}(q).
	\end{align}
	Since $\chi_\A^\quasi$ is a quasi-polynomial with the gcd-property and the minimum period $\tilde{n}_\A$, we have $g^{(r_1)}_1 = g^{(r_2)}_1$ if $\gcd\{\tilde{n}_\A,r_1\} = \gcd\{\tilde{n}_\A,r_2\}$.
	Hence $r_0 \neq \tilde{n}_\A$ implies that $g_1^{(r_0)} \neq g_1^{(\tilde{n}_\A)}$.
	This is a contradiction in the fact that $g_1^{(r_0)} = g_1^{(\tilde{n}_\A)}$.	
\end{proof}

It is interesting to see whether a period of $\chi_{\A,q}$ is divisible by $\tilde{n}_\GAMMA$, but the following example shows that it is not generally true.

\begin{example}
	Let  $L \cong \mathbb{Z}^2$ and $\GAMMA = \{1,\gamma\}$ be a group of order $2$ acting linearly on $L$ via $\rho:\GAMMA \lra \GL(L)$ defined by 
	\begin{align}
		\rho(\gamma) : (x_1,x_2) \lmapsto (-x_1,-x_2).
	\end{align}
	Let $\A = \{H_1,H_2,H_3\}$ be a hyperplane arrangement defined by
	\begin{align}
		H_1 = \bigset{(x_1,x_2) \in L_\mathbb{R}}{x_1 = 0},\quad 
		H_2 = \bigset{(x_1,x_2) \in L_\mathbb{R}}{x_2 = 0},\quad 
		H_3 = \bigset{(x_1,x_2) \in L_\mathbb{R}}{x_1 - x_2 = 0}.
	\end{align}
	Then we have
	\begin{align}
		\chi_{L_q}(1) = q^2,\quad 
		\chi_{L_q}(\gamma) = \begin{cases*}
			1 & $\gcd\{2,q\} = 1$;\\
			4 & $\gcd\{2,q\} = 2$,
		\end{cases*}
	\end{align}
	\begin{align}
		\chi_{\A,q}(1) = \chi_\A^\quasi(q) = q^2 - 3q + 2.
	\end{align}
	Hence $\tilde{n}_\A = 1$ and $\tilde{n}_\GAMMA = 2$.
	Since $\gamma$ acts on $L_q$ as the $180^\circ$ rotation, we can see that 
	\begin{align}
		L_q^\gamma = \begin{cases*}
			\bigl\{(0,0)\bigr\} & $\gcd\{2,q\} = 1$;\\
			\bigl\{(0,0),\ (q/2,q/2)\bigr\} & $\gcd\{2,q\} = 2$,
		\end{cases*}
	\end{align} 
	and $L_q^\gamma \subseteq H_{1,q} \cup H_{2,q} \cup H_{3,q}$ for all $q \in \mathbb{Z}_{>0}$.
	Therefore $\chi_{\A,q}(\gamma) = 0$, that is, 
	\begin{align}
		\chi_{\A,q} = \dfrac{q^2-3q+2}{2}\chi_\st = \dfrac{1}{2}(\chi_\st q^2 - 3\chi_\st q + 2\chi_\st).
	\end{align}
	Hence $\chi_{\A,q}$ has the minimum period $1$.
\end{example}

\begin{question}
	Can we characterize hyperplane arrangements $\A$ defined on $L$ invariant under a group $\GAMMA$ such that $\tilde{n}_\GAMMA$ divides a period of $\chi_{\A,q}$?
\end{question}

When $\tilde{n}_\GAMMA$ divides a period of $\chi_{\A,q}$, we expect $\lcm\{\tilde{n}_\A,\tilde{n}_\GAMMA\}$ to be the minimum period of $\chi_{\A,q}$, but this is also generally incorrect.

\begin{example}
	Let  $L \cong \mathbb{Z}^2$ and $\GAMMA = \{1,\gamma\}$ be a group of order $2$ acting linearly on $L$ via $\rho:\GAMMA \lra \GL(L)$ defined by 
	\begin{align}
		\rho(\gamma) : (x_1,x_2) \lmapsto (x_2,x_1).
	\end{align}
	Let $\A = \{H_1\}$ be a hyperplane arrangement defined by
	\begin{align}
		H_1 = \bigset{(x_1,x_2) \in L_\mathbb{R}}{x_1 + x_2 = 0}.
	\end{align}
	Then we have
	\begin{align}
		\chi_{L_q}(1) = q^2,\quad 
		\chi_{L_q}(\gamma) = q,\quad 
	\end{align}
	\begin{align}
		\chi_{\A,q}(1) = \chi_\A^\quasi(q) = q^2-q.
	\end{align}
	Hence $\tilde{n}_\A = \tilde{n}_\GAMMA = 1$.
	Since $\gamma$ acts on $L_q$ as the reflection in the hyperplane defined by $x_1 - x_2 = 0$, we can see that 
	\begin{align}
		L_q^\gamma = \bigset{(x_1,x_2) \in L_q}{x_1 - x_2 = 0},
	\end{align}
	and
	\begin{align}
		L_q^\gamma \cap H_1 = \begin{cases*}
			\bigl\{(0,0)\bigr\} & $\gcd\{2,q\} = 1$;\\
			\bigl\{(0,0),\ (q/2,q/2)\bigr\} & $\gcd\{2,q\} = 2$.
		\end{cases*}
	\end{align}
	Therefore
	\begin{align}
		 \chi_{\A,q}(\gamma) = \begin{cases*}
			q-1 & $\gcd\{2,q\} = 1$;\\
			q-2 & $\gcd\{2,q\} = 2$,
		\end{cases*}\quad 
		\chi_{\A,q} = \begin{cases*}
			\dfrac{1}{2}(\chi_\st q^2 - 2\boldsymbol{\delta}q + \boldsymbol{1} - \boldsymbol{\delta}) & $\gcd\{2,q\} = 1$;\vspace{1mm}\\
			\dfrac{1}{2}(\chi_\st q^2 - 2\boldsymbol{\delta}q + 2\boldsymbol{1} - 2\boldsymbol{\delta}) & $\gcd\{2,q\} = 2$.
		\end{cases*}
	\end{align}
	Hence $\chi_{\A,q}$ has the minimum period $2$.
\end{example}

\begin{question}
	Can we characterize hyperplane arrangements $\A$ defined on $L$ invariant under a group $\GAMMA$ such that $\chi_{\A,q}$ has the minimum period $\lcm\{\tilde{n}_\A,\tilde{n}_\GAMMA\}$?
	Moreover, is it possible to find an explicit formula for the minimum period?
\end{question}

\begin{example}\label{egperiod10}
	Let  $L \cong \mathbb{Z}^2$ and $\GAMMA$ be a cyclic group of order $4$ generated by $\gamma$.
	Suppose that $\GAMMA$ acts linearly on $L$ via $\rho:\GAMMA \lra \GL(L)$ defined by 
	\begin{align}
		\rho(\gamma) : (x_1,x_2) \lmapsto (x_2,-x_1).
	\end{align}
	Let $\A = \{H_1,H_2\}$ be a hyperplane arrangement defined as follows (see also \cref{fig10} in \cref{S4}):
	\begin{align}
		H_1 = \bigset{(x_1,x_2) \in L_\mathbb{R}}{2x_1 - x_2 = 0},\quad 
		H_2 = \bigset{(x_1,x_2) \in L_\mathbb{R}}{x_1 + 2x_2 = 0}.
	\end{align}
	Then we have
	\begin{align}
		\chi_{L_q}(1) = q^2,\quad \chi_{L_q}(\gamma) = \chi_{L_q}(\gamma^3) = \begin{cases*}
			1 & $\gcd\{2,q\} = 1$;\\
			2 & $\gcd\{2,q\} = 2$,
		\end{cases*}\quad
		\chi_{L_q}(\gamma^2) = \begin{cases*}
			1 & $\gcd\{2,q\} = 1$;\\
			4 & $\gcd\{2,q\} = 2$,
		\end{cases*}
	\end{align}
	\begin{align}
		\chi_{\A,q}(1) = \chi_\A^\quasi(q) = \begin{cases*}
			q^2 - 2q + 1 & $\gcd\{5,q\} = 1$;\\
			q^2 - 2q + 5 & $\gcd\{5,q\} = 5$.
		\end{cases*}
	\end{align}
	Hence $\tilde{n}_\A = 5$ and $\tilde{n}_\GAMMA = 2$.
	Since $\gamma$ acts on $L_q$ as the $90^\circ$ rotation, we can see that
	\begin{align}
		L_q^\gamma &= L_q^{\gamma^3} = \begin{cases*}
			\bigl\{(0,0)\bigr\} & $\gcd\{2,q\} = 1$;\\
			\bigl\{(0,0),\ (q/2,q/2)\bigr\} & $\gcd\{2,q\} = 2$,
		\end{cases*}\\
		L_q^{\gamma^2} &= \begin{cases*}
			\bigl\{(0,0)\bigr\} & $\gcd\{2,q\} = 1$;\\
			\bigl\{(0,0),\ (q/2,0),\ (0,q/2),\ (q/2,q/2)\bigr\} & $\gcd\{2,q\} = 2$.
		\end{cases*}
	\end{align}
	and 
	\begin{align}
		L_q^{\gamma^k}\setminus (H_{1,q} \cup H_{2,q}) = \begin{cases*}
			\emptyset & $\gcd\{2,q\} = 1$;\\
			\bigl\{ (q/2,q/2) \bigr\} & $\gcd\{2,q\} = 2$
		\end{cases*}
	\end{align}
	for $k \in \{1,2,3\}$.
	Therefore 
	\begin{align}
		\chi_{\A,q}(\gamma) = \chi_{\A,q}(\gamma^2) = \chi_{\A,q}(\gamma^3) = \begin{cases*}
			0 & $\gcd\{2,q\} = 1$;\\
			1 & $\gcd\{2,q\} = 2$,
		\end{cases*}
	\end{align}
	and
	\begin{align}
		\chi_{\A,q} = \begin{cases*}
			\dfrac{1}{4}(\chi_\st q^2 - 2\chi_\st q + \chi_\st) & $\gcd\{10,q\} = 1$;\vspace{1mm}\\
			\dfrac{1}{4}(\chi_\st q^2 - 2\chi_\st q + 4\boldsymbol{1}) & $\gcd\{10,q\} = 2$;\vspace{1mm}\\
			\dfrac{1}{4}(\chi_\st q^2 - 2\chi_\st q + 5\chi_\st) & $\gcd\{10,q\} = 5$;\vspace{1mm}\\
			\dfrac{1}{4}(\chi_\st q^2 - 2\chi_\st q + 4\chi_\st + 4\boldsymbol{1}) & $\gcd\{10,q\} = 10$.
		\end{cases*}
	\end{align}
	Hence $\chi_{\A,q}$ has the minimum period $10$.
\end{example}

\section{Via $q$-torsion points of the torus} \label{S3}

In this section, we consider the explicit expression of points fixed by $\gamma \in \GAMMA$ using applying elementary operations to a diagonal matrix.
For this purpose, we will identify the points in $L_q$ with the $q$-torsion points of the torus.
 
Let $L$ be a lattice of rank $\ell$ and define $L_q \ceq L/qL$ for $q \in \mathbb{Z}_{>0}$.
Let $T \ceq L_\mathbb{R}/L$ be a $\ell$-torus, and let $\pi_T : L_\mathbb{R} \lra T$ denote the natural projection.
Instead of $L_q$, we will consider the set $T[q]$ of $q$-torsion points of $T$:
\begin{align}
	T[q] = \bigset{t \in T}{qt = 0} = \bigset{\pi_T(x) \in T}{qx \in L}.
\end{align}
Obviously, there exists a bijection $f : T[q] \lra L_q$ defined by
\begin{align}
	f : \pi_T(x) \longmapsto \pi_q(qx) \quad (x \in L_\mathbb{R}). \label{f}
\end{align}

Let $\GAMMA$ be a finite group acting linearly on $L$ via $\rho : \GAMMA \lra \GL(L)$.
Let $\GL(T)$ be the group of the linear transformations of $T$ defined by
\begin{align}
	\GL(T) = \bigset{g : T \lra T}{\text{there exists $g' \in \GL(L_\mathbb{R})$ such that $g \circ \pi_T = \pi_T \circ g'$}}.
\end{align}
Then the $\GAMMA$-action $\rho_T : \GAMMA \lra \GL(T)$ is induced naturally.
In other words, it satisfies
\begin{align}
	\rho_T(\gamma) \circ \pi_T = \pi_T \circ \rho(\gamma)
\end{align}
for each $\gamma \in \GAMMA$.
\begin{proposition}
	The map $f$ defined by \cref{f} is $\GAMMA$-equivalent.
\end{proposition}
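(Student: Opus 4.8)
The plan is to check directly that $f$ intertwines the two induced actions, that is, that $f \circ \rho_T(\gamma) = \rho_q(\gamma) \circ f$ for every $\gamma \in \GAMMA$; this is exactly the assertion that $f$ is $\GAMMA$-equivariant. The only ingredients I would use are the two defining relations $\rho_T(\gamma) \circ \pi_T = \pi_T \circ \rho(\gamma)$ and $\rho_q(\gamma) \circ \pi_q = \pi_q \circ \rho(\gamma)$, together with the elementary fact that the linear extension of $\rho(\gamma)$ to $L_\mathbb{R}$ commutes with multiplication by the scalar $q$ and carries $L$ into $L$.

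First I would fix $\gamma \in \GAMMA$ and a point $t \in T[q]$, and write $t = \pi_T(x)$ for some $x \in L_\mathbb{R}$ with $qx \in L$. The relation for $\rho_T$ gives $\rho_T(\gamma)(t) = \pi_T\bigl(\rho(\gamma)(x)\bigr)$. Next I would observe that $q\,\rho(\gamma)(x) = \rho(\gamma)(qx)$ lies in $L$, since $\rho(\gamma)$ is linear and preserves $L$; hence $\pi_T(\rho(\gamma)(x))$ is again a $q$-torsion point and $f$ may legitimately be applied to it. By the definition of $f$ we then obtain
\begin{align}
	f\bigl(\rho_T(\gamma)(t)\bigr) = \pi_q\bigl(q\,\rho(\gamma)(x)\bigr) = \pi_q\bigl(\rho(\gamma)(qx)\bigr).
\end{align}

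Finally, because $qx \in L$, the relation for $\rho_q$ applied to the lattice element $qx$ yields $\pi_q(\rho(\gamma)(qx)) = \rho_q(\gamma)\bigl(\pi_q(qx)\bigr) = \rho_q(\gamma)\bigl(f(t)\bigr)$, which is precisely the identity to be proved. I do not expect any genuine obstacle in this argument; the one point that merits a word of care is the implicit independence of the computation from the chosen representative $x$ of $t$, but this is already subsumed in the fact—used to define $f$ in the first place—that $f$ is a well-defined map on $T[q]$, so no separate verification is required.
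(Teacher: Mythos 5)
Your argument is correct and coincides with the paper's proof: both verify $f\circ\rho_T(\gamma)=\rho_q(\gamma)\circ f$ by the same chain of equalities, passing from $f(\pi_T(\rho(\gamma)(x)))$ to $\pi_q(\rho(\gamma)(qx))$ via linearity of $\rho(\gamma)$ and then applying the defining relation for $\rho_q$. Your added remarks on well-definedness and on $\rho(\gamma)(qx)\in L$ are harmless elaborations of the same computation.
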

\begin{proof}
	Let $\pi_T(x) \in T[q]$ ($x \in L_\mathbb{R}$).
	Then we have
	\begin{align}
		f\Bigl(\rho_T(\gamma)\bigl(\pi_T(x)\bigr)\Bigr) &= f\Bigl(\pi_T\bigl(\rho(\gamma)(x)\bigr)\Bigr)\\
		&= \pi_q\Bigl(q \cdot \bigl(\rho(\gamma)(x)\bigr)\Bigr)\\
		&= \pi_q\bigl(\rho(\gamma)(qx)\bigr)\\
		&= \rho_q(\gamma)\bigl(\pi_q(qx)\bigr)\\
		&= \rho_q(\gamma)\Bigl(f\bigl(\pi_T(x)\bigr)\Bigr).
	\end{align}
	for each $\gamma \in \GAMMA$.
\end{proof}

Since $f$ is bijective, two $\GAMMA$-sets $T[q]$ and $L_q$ are isomorphic.
Hence $\chi_{\emptyset,q}$ is equal to the permutation character of $\GAMMA$ on $T[q]$, that is, we have
\begin{align}
	\chi_{\emptyset,q}(\gamma) = \#T[q]^\gamma = \#\bigset{t \in T[q]}{\rho_T(\gamma)(t) = t}
\end{align}
for any $\gamma \in \GAMMA$.

One of the purposes of this section is to explicitly represent the set $T[q]^\gamma$ for each $\gamma \in \GAMMA$.
It is sufficient that we consider 
\begin{align}
	\pi_T^{-1}(T^\gamma) = \bigset{x \in L_\mathbb{R}}{\rho(\gamma)(x) - x \in L}.
\end{align}
Note that it is clear that $\ker(\rho(\gamma) - \id) \subseteq \pi_T^{-1}(T^\gamma)$, where $\id$ is the identity map. 

\begin{lemma}\label{inv_gamma lemmam}
	For $\gamma \in \GAMMA$, suppose that unimodular matrices $U,V$ and integers $d_1,\ldots,d_\ell$ satisfy 
	\begin{align}
		U(R_\gamma-I)V = \diag(d_1,\ldots,d_\ell). \label{diag}
	\end{align}
	Then 
	\begin{align}
		\pi_T^{-1}(T^\gamma) = \bigoplus_{\substack{1 \leq i \leq \ell\\ d_i \neq 0}}d_i^{-1}\mathbb{Z}u_i \oplus \bigoplus_{\substack{1 \leq i \leq \ell\\d_i = 0}}\mathbb{R}u_i, \label{inv_gamma}
	\end{align}
	where $u_i = u_{i1}\beta_1  \cdots + u_{i\ell}\beta_\ell \in L$ for $U = (u_{ij})_{ij}$. 
\end{lemma}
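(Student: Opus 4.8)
The plan is to rewrite the defining condition of $\pi_T^{-1}(T^\gamma)$ in matrix form and then diagonalize. Identifying $L_\mathbb{R}$ with $\mathbb{R}^\ell$ and $L$ with $\mathbb{Z}^\ell$ through the basis $B$, and recalling that $\rho(\gamma)(x) = xR_\gamma$, the condition $\rho(\gamma)(x) - x \in L$ becomes $x(R_\gamma - I) \in \mathbb{Z}^\ell$. Thus
\begin{align}
	\pi_T^{-1}(T^\gamma) = \bigset{x \in \mathbb{R}^\ell}{x(R_\gamma - I) \in \mathbb{Z}^\ell}.
\end{align}

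Next I would feed in the diagonalization \cref{diag}. Writing $D = \diag(d_1,\ldots,d_\ell)$ and hence $R_\gamma - I = U^{-1}DV^{-1}$, I perform the linear change of variables $y = xU^{-1}$ (equivalently $x = yU$), which is an automorphism of $\mathbb{R}^\ell$ since $U$ is unimodular. Under it $x(R_\gamma - I) = yDV^{-1}$, so the membership condition reads $yDV^{-1} \in \mathbb{Z}^\ell$. Because $V^{-1}$ is an integer matrix with integer inverse, right multiplication by $V^{-1}$ is a bijection of $\mathbb{Z}^\ell$ onto itself; hence the condition is equivalent to $yD \in \mathbb{Z}^\ell$, that is, $d_iy_i \in \mathbb{Z}$ for all $i$.

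Then I solve componentwise: for each $i$ with $d_i \neq 0$ the constraint $d_iy_i \in \mathbb{Z}$ gives $y_i \in d_i^{-1}\mathbb{Z}$, while for each $i$ with $d_i = 0$ it is vacuous and leaves $y_i \in \mathbb{R}$ free. Therefore the admissible $y$ range over $\bigoplus_{d_i \neq 0}d_i^{-1}\mathbb{Z}e_i \oplus \bigoplus_{d_i = 0}\mathbb{R}e_i$, where $e_1,\ldots,e_\ell$ is the standard basis. Transporting back through $x = yU$ sends $e_i$ to the $i$-th row $u_i = u_{i1}\beta_1 + \cdots + u_{i\ell}\beta_\ell$ of $U$, and since $U$ is unimodular these rows form a basis of $L_\mathbb{R}$, so the sum is an internal direct sum. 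This yields exactly \cref{inv_gamma}.

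The computation is essentially routine; the one place to stay careful is the row-vector convention, so that $U$ and $V^{-1}$ act on the correct sides and the unimodularity of $V^{-1}$ (preserving $\mathbb{Z}^\ell$) is invoked on the right while the change of variables by $U$ is invoked on the left. A sanity check that the answer is consistent is that setting all constraints to equality recovers $\ker(\rho(\gamma) - \id) = \bigoplus_{d_i = 0}\mathbb{R}u_i$, which is compatible with the inclusion $\ker(\rho(\gamma) - \id) \subseteq \pi_T^{-1}(T^\gamma)$ noted just before the lemma.
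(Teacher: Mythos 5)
Your proof is correct and follows essentially the same route as the paper: both reduce the condition $x(R_\gamma-I)\in\mathbb{Z}^\ell$ to the diagonal condition $d_iy_i\in\mathbb{Z}$ via the factorization $R_\gamma-I=U^{-1}DV^{-1}$ and the unimodularity of $U$ and $V$. Your version is organized a bit more cleanly (a single change of variables handling both inclusions at once, where the paper argues the two inclusions separately), but the underlying idea is identical.
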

\begin{proof}
	Let $D \ceq \diag(d_1,\ldots,d_\ell)$.
	For $i \in \{1,\ldots,\ell\}$, 
	\begin{align}
		u_i(R_\gamma - I) = u_iU^{-1}DV^{-1} = d_ie_iV^{-1},
	\end{align}
	where $e_i$ denotes the unit vector with $1$ in the $i$-th component and $0$ otherwise.
	Therefore, if $d_i = 0$, then $\mathbb{R}u_i \subseteq \pi_T^{-1}(T^\gamma)$.
	
	Suppose that $d_i \neq 0$.
	Since $V$ is unimodular, $e_iV^{-1}$ is an integer vector.
	Assume that $e_iT^{-1} \in c\mathbb{Z}^\ell$ for $c \in \mathbb{Z}_{>0}$.
	Then since 
	\begin{align}
		\pm 1 = \det{V^{-1}} = c \cdot \det \tenchi\begin{pmatrix}
			e_1V^{-1} & \cdots & c^{-1}e_iV^{-1} & \cdots & e_\ell V^{-1}
		\end{pmatrix} \in c\mathbb{Z},
	\end{align}
	we obtain $c = 1$.
	Hence we have $d_{\gamma,i}^{-1}u_i(R_\gamma - I) = e_iV^{-1} \in \mathbb{Z}^\ell$, and $d_i^{-1}u_i \in \pi_T^{-1}(T^\gamma)$ follows.
	
	Conversely, let $x \in \pi_T^{-1}(T^\gamma)$.
	Since $\{u_1,\ldots,u_\ell\}$ forms a basis for $L_\mathbb{R}$, we can write $x = c_1u_1 + \cdots + c_\ell u_\ell$ for some $c_1,\ldots,c_\ell \in \mathbb{R}$.
	Then 
	\begin{align}
		x(R_\gamma - I) = \sum_{i=1}^\ell c_iu_i(R_\gamma - I) = \sum_{\substack{1 \leq i \leq \ell\\ d_i \neq 0}}c_id_ie_iV^{-1} \in \mathbb{Z}^\ell.
	\end{align}
	Therefore we can see that $c_i \in d_i^{-1}\mathbb{Z}$ for any $i$ with $d_i \neq 0$.
	Hence $x$ belongs to the right-hand set of \cref{inv_gamma}.
\end{proof}

\begin{theorem}\label{Tgammam}
	For $\gamma \in \GAMMA$, suppose that unimodular matrices $U,V$ and integers $d_1,\ldots,d_\ell$ satisfy \cref{diag}.
	Then
	\begin{align}
		T^\gamma = \pi_T\left( \bigoplus_{\substack{1 \leq i \leq \ell\\d_i \neq 0}}d_i^{-1}\mathbb{Z}u_i \oplus \bigoplus_{\substack{1 \leq i \leq \ell\\d_i = 0}}\mathbb{R}u_i \right)
		= \Bigset{ \sum_{\substack{1 \leq i \leq \ell\\d_i \neq 0}}\pi_T({a_i \cdot d_i^{-1}u_i}) + \sum_{\substack{1 \leq i \leq \ell\\d_i = 0}}\pi_T({b_i \cdot u_i})}{a_i \in \mathbb{Z},\ b_i \in \mathbb{R}}.
	\end{align}
	Furthermore, for $q \in \mathbb{Z}_{>0}$, we have
	\begin{align}
		T[q]^\gamma
		= \Bigset{ \sum_{\substack{1 \leq i \leq \ell\\d_i \neq 0}}\pi_T\bigl({a_i \cdot g_i(q)^{-1}u_i}\bigr) + \sum_{\substack{1 \leq i \leq \ell\\d_i = 0}}\pi_T({b_i \cdot q^{-1}u_i})}{a_i, b_i \in \mathbb{Z}}, \label{Tqgamma}
	\end{align}
	where $u_i \in L$ is same as given in \cref{inv_gamma lemmam} and $g_i(q) \ceq \gcd\{d_i,q\}$.
\end{theorem}

For $\gamma \in \GAMMA$, let $r(\gamma) \ceq \rank(R_\gamma - I)$ and $d_{\gamma,1},\ldots,d_{\gamma,r(\gamma)}$ denote the elementary divisors of $R_\gamma - I$ satisfying $d_{\gamma,1} \mid d_{\gamma,2} \mid \cdots \mid d_{\gamma,r(\gamma)}$.
Then there exist $\ell\times \ell$ unimodular matrices $U,V$ (not unique) such that 
\begin{align}
	U(R_\gamma - I)V = \diag(d_{\gamma,1},\ldots,d_{\gamma,r(\gamma)},0,\ldots,0).
\end{align}
The right-hand matrix of the above is called a \textit{Smith normal form} of $R_\gamma - I$ 
(In this paper, we call $U$ (resp. $V$) a \textit{left} (resp. \textit{right}) \textit{transformation matrix} of $R_\gamma - I$).

\begin{corollary}\label{Tgamma}
	Let $\gamma \in \GAMMA$.
	Then
	\begin{align}
		T^\gamma = \pi_T\left( \bigoplus_{i \leq r(\gamma)}d_{\gamma,i}^{-1}\mathbb{Z}u_i \oplus \bigoplus_{i > r(\gamma)}\mathbb{R}u_i \right)
		= \Bigset{ \sum_{i \leq r(\gamma)}\pi_T({a_i \cdot d_{\gamma,i}^{-1}u_i}) + \sum_{i > r(\gamma)}\pi_T({b_i \cdot u_i})}{a_i \in \mathbb{Z},\ b_i \in \mathbb{R}}.
	\end{align}
	Furthermore, for $q \in \mathbb{Z}_{>0}$, we have
	\begin{align}
		T[q]^\gamma
		= \Bigset{ \sum_{i \leq r(\gamma)}\pi_T\bigl({a_i \cdot g_{\gamma,i}(q)^{-1}u_i}\bigr) + \sum_{i > r(\gamma)}\pi_T({b_i \cdot q^{-1}u_i})}{a_i, b_i \in \mathbb{Z}},
	\end{align}
	where $u_i = u_{i1}\beta_1 + \cdots + u_{i\ell}\beta_\ell \in L$ for a left transformation matrix $U = (u_{ij})_{ij}$ of $R_\gamma - I$, and $g_{\gamma,i}(q) \ceq \gcd\{d_{\gamma,i},q\}$.
\end{corollary}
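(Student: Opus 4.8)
The plan is to obtain \cref{Tgamma} as the special case of \cref{Tgammam} in which the diagonalization \cref{diag} is taken to be a Smith normal form of $R_\gamma - I$. Choosing $U,V$ to be left and right transformation matrices, the integers $(d_1,\dots,d_\ell)$ of \cref{diag} become $(d_{\gamma,1},\dots,d_{\gamma,r(\gamma)},0,\dots,0)$, so that $\{i : d_i \neq 0\} = \{1,\dots,r(\gamma)\}$ and $\{i : d_i = 0\} = \{r(\gamma)+1,\dots,\ell\}$. Under this identification the first displayed formula for $T^\gamma$ is a verbatim rewriting of the corresponding formula in \cref{Tgammam}, and the description of $T[q]^\gamma$ follows once we note $g_i(q) = g_{\gamma,i}(q)$. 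Thus no new argument is strictly needed; what remains is to make the torsion computation explicit, which I would do directly from \cref{inv_gamma lemmam}.

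Since $U$ is unimodular, its rows $u_1,\dots,u_\ell$ form a $\mathbb{Z}$-basis of $L$ and hence an $\mathbb{R}$-basis of $L_\mathbb{R}$. By \cref{inv_gamma lemmam} each $x \in \pi_T^{-1}(T^\gamma)$ is written uniquely as $x = \sum_{i \leq r(\gamma)} c_i u_i + \sum_{i > r(\gamma)} c_i u_i$ with $c_i \in d_{\gamma,i}^{-1}\mathbb{Z}$ for $i \leq r(\gamma)$ and $c_i \in \mathbb{R}$ for $i > r(\gamma)$. By the definition of $T[q]$, the point $\pi_T(x)$ is $q$-torsion exactly when $qx \in L$; expanding in the basis $\{u_i\}$, this is equivalent to the coordinatewise conditions $q c_i \in \mathbb{Z}$ for all $i$. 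Hence the problem decouples into $\ell$ independent one-dimensional conditions, one per coefficient.

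The substantive step is the following elementary reduction. For $i \leq r(\gamma)$, write $c_i = m/d_{\gamma,i}$ with $m \in \mathbb{Z}$; then $q c_i \in \mathbb{Z}$ means $d_{\gamma,i} \mid qm$, and dividing through by $g_{\gamma,i}(q) = \gcd\{d_{\gamma,i},q\}$ and using $\gcd\{d_{\gamma,i}/g_{\gamma,i}(q),\,q/g_{\gamma,i}(q)\} = 1$, this is equivalent to $(d_{\gamma,i}/g_{\gamma,i}(q)) \mid m$, i.e.\ $c_i \in g_{\gamma,i}(q)^{-1}\mathbb{Z}$. Such $c_i$ are precisely those of the form $a_i\, g_{\gamma,i}(q)^{-1}$ with $a_i \in \mathbb{Z}$, contributing the terms $\pi_T(a_i \cdot g_{\gamma,i}(q)^{-1} u_i)$. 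For $i > r(\gamma)$, a coefficient $c_i \in \mathbb{R}$ with $q c_i \in \mathbb{Z}$ forces $c_i \in q^{-1}\mathbb{Z}$, contributing $\pi_T(b_i \cdot q^{-1} u_i)$ with $b_i \in \mathbb{Z}$. Summing these contributions over all $i$ gives exactly the claimed expression for $T[q]^\gamma$.

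I do not anticipate a genuine obstacle: the corollary is a relabeling of \cref{Tgammam} together with the $\gcd$ reduction above, and that reduction is the only real content. The single point requiring care is the bookkeeping of the basis: $u_i$ must be taken as the $i$-th \emph{row} of the left transformation matrix $U$ (consistent with the convention $\rho(\gamma)(x) = x R_\gamma$ for row vectors established in \cref{inv_gamma lemmam}), so that the condition $qx \in L$ genuinely decouples into independent conditions on the coordinates $c_i$.
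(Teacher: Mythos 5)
Your proposal is correct and follows the paper's route exactly: the corollary is obtained by specializing \cref{Tgammam} to a Smith normal form of $R_\gamma - I$, which the paper treats as immediate and leaves without a written proof. Your additional verification of the torsion statement (decoupling $qx \in L$ into the coordinate conditions $qc_i \in \mathbb{Z}$ via the unimodularity of $U$, and the reduction $d_{\gamma,i} \mid qm \Leftrightarrow c_i \in \gcd\{d_{\gamma,i},q\}^{-1}\mathbb{Z}$) correctly fills in the detail the paper leaves implicit.
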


In particular, \cref{Tgamma} implies that $T^\gamma$ is a disjoint union of $d_{\gamma,1} \cdots d_{\gamma,r(\gamma)}$ tori of dimension $\ell-r(\gamma)$ (see also \cite[\S6]{HigashitaniTranYoshinaga}).

Let $\A = \{H_1,\ldots,H_n\}$ be a hyperplane arrangement in $L_\mathbb{R}$ defined on $L$.
Fix a linear form $\alpha_i \in L^\vee$ satisfying $H_i = \ker{\alpha_i}$ for each $i \in \{1,\ldots,n\}$.
Define the set
\begin{align}
	\A^\aff \ceq \bigset{H_i^k \ceq \Bigset{x \in L_\mathbb{R}}{\alpha_i(x) = k}}{i \in \{1,\ldots,n\},\ \ k \in \mathbb{Z}}
\end{align}
of affine hyperplanes translating the hyperplanes in $\A$.
Let 
\begin{align}
	M(\A^\aff) = L_\mathbb{R} \setminus \bigcup_{H \in \A^\aff}H
\end{align}
be the complement of affine arrangement $\A^\aff$.

\begin{lemma}\label{l2.10}
	Let $x,y \in L_\mathbb{R}$ satisfying $y-x \in L$.
	If $x \in M(\A^\aff)$, then $y \in M(\A^\aff)$.
	In other words, if $\pi_T(x) = \pi_T(y)$, then $x \in M(\A^\aff)$ implies $y \in M(\A^\aff)$.
\end{lemma}
\begin{proof}
	Let $x,y \in L_\mathbb{R}$ satisfying $y-x \in L$.
	Suppose that $x \in M(\A^\aff)$.
	Then $\alpha_i(x) \not\in \mathbb{Z}$ for any $i \in \{1,\ldots,n\}$.
	Since $\alpha_i(y-x) \in \mathbb{Z}$, we have 
	\begin{align}
		\alpha_i(y) = \alpha_i(y-x) + \alpha_i(x) \not\in \mathbb{Z}
	\end{align}
	for any $i \in \{1,\ldots,n\}$.
	Hence $y \in M(\A^\aff)$.
\end{proof}

We consider the subset
\begin{align}
	T(\A) \ceq \pi_T\bigl(M(\A^\aff)\bigr) = \bigset{\pi_T(x) \in T}{\alpha_i(x) \not \in \mathbb{Z} \tforall i \in \{1,\ldots,n\}}
\end{align}
of the torus $T$.
It can be regarded as the complement of $\A$ in $T$.
That is, each hyperplane in $\A$ forms a subtorus of dimension $\ell-1$ in $T$.
For $q \in \mathbb{Z}_{>0}$, define 
\begin{align}
	T(\A)[q] \ceq T(\A) \cap T[q] = \bigset{\pi_T(x) \in T[q]}{\alpha_i(x) \not \in \mathbb{Z} \tforall i \in \{1,\ldots,n\}}.
\end{align}

\begin{lemma}\label{l2.11}
	Let $f : T[q] \lra L_q$ be the bijection defined by \cref{f}.
	Then $f\bigl(T(\A)[q]\bigr) = M(\A;q)$.
\end{lemma}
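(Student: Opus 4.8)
The plan is to unwind the two defining conditions and check that they coincide, index by index, under the bijection $f$. Recall from the construction of \cref{f} that $f$ sends $\pi_T(x)$, where $x \in L_\mathbb{R}$ satisfies $qx \in L$, to $\pi_q(qx) \in L_q$, and that $f$ is a bijection. So first I would fix an arbitrary $\pi_T(x) \in T[q]$ and set $y \ceq qx \in L$, so that $f(\pi_T(x)) = \bar{y}$. Membership of $\pi_T(x)$ in $T(\A)[q]$ is controlled by the conditions $\alpha_i(x) \notin \mathbb{Z}$ for all $i \in \{1,\ldots,n\}$, whereas membership of $\bar{y}$ in $M(\A;q)$ is controlled by $\alpha_i(y) \not\equiv 0 \pmod{q}$ for all $i$. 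Since $f$ is a bijection and $T(\A)[q] \subseteq T[q]$, $M(\A;q) \subseteq L_q$, it suffices to prove that these two families of conditions are equivalent.

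The key step is a short computation exploiting linearity of $\alpha_i$. Namely, $\alpha_i(y) = \alpha_i(qx) = q\,\alpha_i(x)$. Because $y = qx \in L$ and $\alpha_i \in L^\vee$, the value $\alpha_i(y)$ is automatically an integer, say $\alpha_i(y) = m \in \mathbb{Z}$, and hence $\alpha_i(x) = m/q \in \frac{1}{q}\mathbb{Z}$. Then the non-integrality condition $\alpha_i(x) \notin \mathbb{Z}$ reads $m/q \notin \mathbb{Z}$, i.e. $q \nmid m$, which is literally the statement $m \not\equiv 0 \pmod{q}$, i.e. $\alpha_i(y) \not\equiv 0 \pmod{q}$. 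This yields the equivalence for each $i$, and therefore $\pi_T(x) \in T(\A)[q]$ if and only if $\bar{y} \in M(\A;q)$; the asserted set equality $f\bigl(T(\A)[q]\bigr) = M(\A;q)$ follows at once from the bijectivity of $f$.

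I do not expect a genuine obstacle here: the statement is essentially the bookkeeping identity that converts the ``non-integrality'' cut defining the torus complement $T(\A)$ into the ``nonzero mod $q$'' cut defining $M(\A;q)$. The only point deserving explicit mention is that $\alpha_i(qx)$ is an integer (so that reduction modulo $q$ is meaningful in the first place), which is exactly what $qx \in L$ guarantees and what permits the clean passage between $\alpha_i(x) \notin \mathbb{Z}$ and $\alpha_i(qx) \not\equiv 0 \pmod{q}$. Once the equivalence of the two defining conditions is recorded, nothing further is required.
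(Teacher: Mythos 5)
Your proof is correct and follows essentially the same route as the paper's: both reduce the claim to the elementwise equivalence $\alpha_i(x)\notin\mathbb{Z} \iff \alpha_i(qx)\not\equiv 0 \pmod q$ for $qx\in L$, using that $\alpha_i(qx)=q\,\alpha_i(x)$ is an integer; you merely package the two inclusions as a single biconditional plus bijectivity of $f$, whereas the paper writes out the two directions separately.
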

\begin{proof}
	Let $\pi_T(x) \in T(\A)[q]$ ($x \in L_\mathbb{R}$).
	Then $qx \in L$.
	For any $i \in \{1,\ldots,n\}$, since $\alpha_i(x) \not\in \mathbb{Z}$, we have $\alpha_i(qx) \in \mathbb{Z}\setminus q\mathbb{Z}$, that is, $\alpha_i(qx) \not\equiv 0 \pmod{q}$.
	Hence $f\bigl(\pi_T(x)\bigr) = \pi_q(qx)$ belongs to $M(\A;q)$.
	
	Conversely, let $\pi_q(y) \in M(\A;q)$ ($y \in L$).
	Since $f$ is bijective and $\pi_T$ is surjective, there exists $x \in L_\mathbb{R}$ such that $f\bigl(\pi_T(x)\bigr) = \pi_q(qx) = \pi_q(y)$.
	For any $i \in \{1,\ldots,n\}$, since $\alpha_i(qx) \not\equiv 0 \pmod{q}$, we have $\alpha_i(x) \not\in \mathbb{Z}$.
	Therefore $\pi_T(x) \in T(\A)[q]$, and hence $\pi_q(y) \in f(T(\A)[q])$.
\end{proof}

Suppose that an arrangement $\A$ is $\GAMMA$-invariant.
Then $T(\A)[q]$ is invariant under $\GAMMA$ and isomorphic to $M(\A;q)$ as a $\GAMMA$-set.

\begin{corollary}
	For $q \in \mathbb{Z}_{>0}$ and $\gamma \in \GAMMA$, we have
	\begin{align}
		\chi_{\A,q}(\gamma) &= \#T(\A)[q]^\gamma = \#\bigset{t \in T(\A)[q]}{\rho_T(\gamma)(t) = t}\\
		&= \#\bigl( T(\A) \cap T[q]^\gamma \bigr).
	\end{align}
\end{corollary}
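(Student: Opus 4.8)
The plan is to transport the $\gamma$-fixed-point count across the bijection $f$ of \cref{f}, exploiting that it is an isomorphism of $\GAMMA$-sets. Recall that, by the very definition of the permutation character, $\chi_{\A,q}(\gamma) = \#M(\A;q)^\gamma$. Since $\A$ is assumed $\GAMMA$-invariant, both $M(\A;q) \subseteq L_q$ and $T(\A)[q] \subseteq T[q]$ are $\GAMMA$-stable subsets, so it suffices to produce a $\GAMMA$-equivariant bijection between them and then observe that such a map matches fixed points.

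First I would restrict $f$. By \cref{l2.11} the bijection $f : T[q] \lra L_q$ carries $T(\A)[q]$ onto $M(\A;q)$, and by the preceding proposition $f$ is $\GAMMA$-equivariant; hence its restriction $f|_{T(\A)[q]} : T(\A)[q] \lra M(\A;q)$ is an isomorphism of $\GAMMA$-sets. Second, I would invoke the elementary fact that a $\GAMMA$-equivariant bijection $g : X \lra Y$ sends $X^\gamma$ bijectively onto $Y^\gamma$ for every $\gamma$: if $\rho_T(\gamma)(t) = t$, then applying $g$ together with equivariance gives $\rho_q(\gamma)\bigl(g(t)\bigr) = g(t)$, and the same argument applied to $g^{-1}$ (equivariant as well) yields the reverse inclusion. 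Applying this to $g = f|_{T(\A)[q]}$ gives
\begin{align}
	\#T(\A)[q]^\gamma = \#M(\A;q)^\gamma = \chi_{\A,q}(\gamma),
\end{align}
which is the first claimed equality.

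Finally, the second equality is purely set-theoretic and requires essentially no work: since $T(\A)[q] = T(\A) \cap T[q]$ by definition, the fixed-point set unwinds as
\begin{align}
	T(\A)[q]^\gamma = \bigset{t \in T(\A) \cap T[q]}{\rho_T(\gamma)(t) = t} = T(\A) \cap T[q]^\gamma.
\end{align}
I do not expect a serious obstacle here; the corollary is a formal consequence of the machinery already assembled. The only points to keep straight are that $\GAMMA$-invariance of $\A$ is precisely what makes $T(\A)[q]$ a $\GAMMA$-subset (so that the symbol $T(\A)[q]^\gamma$ is meaningful in the first place), and that both the restriction of $f$ and its inverse are equivariant, which is exactly what licenses passing to fixed points on each side.
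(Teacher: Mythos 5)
Your argument is correct and is precisely the route the paper takes: the corollary is stated as an immediate consequence of the fact that $f$ restricts to a $\GAMMA$-equivariant bijection $T(\A)[q] \lra M(\A;q)$ (combining \cref{l2.11} with the equivariance of $f$), so fixed-point counts agree, and the final equality is just unwinding $T(\A)[q] = T(\A) \cap T[q]$. Nothing is missing.
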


Here, we show that for fixed $\gamma \in \GAMMA$, $\chi_{\A,q}(\gamma)$ can be expressed using several characteristic quasi-polynomials of (not necessarily central) hyperplane arrangement restricting $\A$.

Fix $\gamma \in \GAMMA$, and suppose again that unimodular matrices $U,V$ and integers $d_1,\ldots,d_\ell$ satisfy
\begin{align}
	U(R_\gamma-I)V = \diag(d_1,\ldots,d_\ell),
\end{align}
and let $u_i$ denote the elements of $L$ corresponding to the $i$-th row vector of $U$.
Define a set
\begin{align}
	\XI \ceq \Bigset{\sum_{\substack{1 \leq i \leq \ell\\d_i \neq 0}}a_i \cdot d_i^{-1}u_i \in L_\mathbb{R}}{a_i \in \{1,\ldots,d_i\}}
\end{align}
and a sublattice 
\begin{align}
	M \ceq \bigoplus_{\substack{1 \leq i \leq \ell\\d_i = 0}}\mathbb{Z}u_i.
\end{align}
of $L$.
Let $T_M \ceq M_\mathbb{R}/M$ be a subtorus of $T$.
Note that the restriction $\pi_T|_{T_M}$ is equal to the natural projection from $M_\mathbb{R}$ onto $T_M$.
For $\xi \in \XI$, define a function $\epsilon_\xi : \mathbb{Z}_{>0} \lra \{0,1\}$ as
\begin{align}
	\epsilon_\xi(q) = \begin{cases*}
		0 & $\pi_T(\xi) \not\in T[q]$;\\
		1 & $\pi_T(\xi) \in T[q]$.
	\end{cases*}
\end{align}
Then, for $q \in \mathbb{Z}_{>0}$, the equation \cref{Tqgamma} implies that 
\begin{align}
	T[q]^\gamma &= \bigset{\pi_T(\xi) + \pi_T(x)}{\xi \in \XI \cap \pi_T^{-1}(T[q]),\ \ \pi_T(x) \in T_M[q]}\\
	&= \bigsqcup_{\substack{\xi \in \XI\\ \epsilon_\xi(q) = 1}}\bigset{\pi_T(\xi) + \pi_T(x)}{\pi_T(x) \in T_M[q]}.
\end{align}
Therefore we have
\begin{align}
	\chi_{\A,q}(q) &= \#\bigl( T(\A) \cap T[q]^\gamma \bigr)\\
	&= \sum_{\xi \in \XI}\epsilon_\xi(q) \cdot \#\bigset{\pi_T(\xi) + \pi_T(x)}{\pi_T(x) \in T_M[q],\ \ \alpha_i(\xi) + \alpha_i(x) \not\in \mathbb{Z} \tforall i \in \{1,\ldots,n\}}\\
	&= \sum_{\xi \in \XI}\epsilon_\xi(q) \cdot \#\bigset{\pi_T(x) \in T_M[q]}{\alpha_i(\xi) + \alpha_i(x) \not\in \mathbb{Z} \tforall i \in \{1,\ldots,n\}}. \label{eqab}
\end{align}
The terms of \eqref{eqab} closely resemble but are not identical to the characteristic quasi-polynomials of (non-central) hyperplane arrangements defined on the lattice $M$ (the characteristic quasi-polynomials in the non-central case are introduced in \cite{KamiyaTakemuraTeraononcentral}).

\section{Via the equivariant Ehrhart theory} \label{S4}

Let $\A = \{H_1,\ldots,H_n\}$ be a (non-empty) hyperplane arrangement in $L_\mathbb{R}$ defined on $L$.
Let $\A^\aff$ denote the set of affine hyperplanes translating the hyperplanes in $\A$.
The connected components of the complement $M(\A^\aff)$ of affine arrangement $\A^\aff$ can be represented as 
\begin{align}
	C = \bigcap_{H_i \in \A}H_i^{k_i,+} \cap H_i^{k_i+1,-}
\end{align}
using open half spaces
\begin{align}
	H_i^{k_i,+} = \bigset{x \in L_\mathbb{R}}{\alpha_i(x) > k_i},\quad H_i^{k_i+1,-} = \bigset{x \in L_\mathbb{R}}{\alpha_i(x) < k_i+1},
\end{align}
where $k_i \in \mathbb{Z}$ for each $H_i \in \A$.
We shall call each component a \textit{chamber} of $\A^\aff$, and denote the set of them by $\mathcal{C}(\A^\aff)$.

For each $i \in \{1,\ldots,\ell\}$, define the map $t_i : L_\mathbb{R} \lra L_\mathbb{R}$ by 
\begin{align}
	t_i : (x_1,\ldots,x_i,\ldots,x_\ell) \lmapsto (x_1,\ldots,x_i+1,\ldots,x_\ell).
\end{align}
For each chamber $C \in \mathcal{C}(\A^\aff)$, it is clear that $t_i(C)$ is congruent to $C$ and also a chamber of $\A^\aff$.
Let $\mathcal{C}_T$ denote the set of connected components of $T(\A)$.
Then we obtain
\begin{align}
	\mathcal{C}_T = \bigset{\pi_T(C)}{C \in \mathcal{C}(\A^\aff)}.
\end{align}
We shall also refer to each component as \textit{chamber}.
Then we can see that
\begin{align}
	\chi_\A^\quasi(q) = \sum_{\pi_T(C) \in \mathcal{C}_T}\Ell_{\pi_T(C)}(q), \label{decompcqp}
\end{align}
where 
\begin{align}
	\Ell_{\pi_T(C)}(q) = \#\bigl(\pi_T(C) \cap T[q]\bigr).
\end{align}
If $C$ is bounded, then $\Ell_{\pi_T(C)}$ coincides with the Ehrhart quasi-polynomial $\Ell_C$.
Therefore the characteristic quasi-polynomial $\chi_\A^\quasi$ can be regarded as a sum of Ehrhart quasi-polynomials (see also \cite[\S2.2]{KamiyaTakemuraTerao}).
For simplicity, we write $\pi_T(C)$ simply as $C$.

Let $\GAMMA$ be a finite group acting linearly on $L$ via $\rho :\GAMMA \lra \GL(L)$.
Suppose that $\A$ is $\GAMMA$-invariant.

\begin{lemma}
	Let $C \in \mathcal{C}(\A^\aff)$ and $\gamma \in \GAMMA$. 
	Then $\rho(\gamma)(C)$ is also a chamber of $\A^\aff$.
	Hence $\GAMMA$ also acts on $\mathcal{C}_T$.
\end{lemma}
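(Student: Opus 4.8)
The plan is to reduce the statement to the elementary fact that a homeomorphism which permutes the members of an arrangement must permute the connected components of its complement. So the real content is to check that $\rho(\gamma)$ permutes the members of $\A^\aff$, after which the chamber claim is automatic, and the action on $\mathcal{C}_T$ follows by pushing everything down to the torus via $\pi_T$.

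First I would compute the image of a single affine hyperplane $H_i^k \in \A^\aff$ under $\rho(\gamma)$. Writing $y = \rho(\gamma)(x)$ and using $\rho^\vee(\gamma)(\alpha) = \alpha \circ \rho(\gamma)^{-1}$, the condition $\alpha_i(x) = k$ becomes $\bigl(\rho^\vee(\gamma)(\alpha_i)\bigr)(y) = k$. By the $\GAMMA$-invariance of $\A$ we have $\rho^\vee(\gamma)(\alpha_i) = \epsilon\,\alpha_j$ for some $j \in \{1,\ldots,n\}$ and some sign $\epsilon \in \{\pm 1\}$, so that
\begin{align}
	\rho(\gamma)(H_i^k) = \bigset{y \in L_\mathbb{R}}{\alpha_j(y) = \epsilon k} = H_j^{\epsilon k}.
\end{align}
Since $k \in \mathbb{Z}$ forces $\epsilon k \in \mathbb{Z}$, the right-hand side again lies in $\A^\aff$; applying the same reasoning to $\gamma^{-1}$ produces the inverse map, so $\rho(\gamma)$ is a permutation of $\A^\aff$. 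Consequently $\rho(\gamma)$ carries $\bigcup_{H \in \A^\aff}H$ onto itself, hence maps $M(\A^\aff) = L_\mathbb{R} \setminus \bigcup_{H \in \A^\aff}H$ onto itself; being a linear automorphism of $L_\mathbb{R}$, it is a homeomorphism and therefore sends connected components to connected components. Thus $\rho(\gamma)(C)$ is again a chamber of $\A^\aff$. (If one prefers an explicit description, one simply applies the displayed identity to the defining inequalities $k_i < \alpha_i(x) < k_i+1$ of $C$, which turns them into inequalities of the same type for the $\alpha_j$.)

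Finally, for the induced action on $\mathcal{C}_T$ I would invoke $\rho_T(\gamma)\circ\pi_T = \pi_T\circ\rho(\gamma)$, which gives
\begin{align}
	\rho_T(\gamma)\bigl(\pi_T(C)\bigr) = \pi_T\bigl(\rho(\gamma)(C)\bigr);
\end{align}
since $\rho(\gamma)(C)$ is a chamber, the right-hand side lies in $\mathcal{C}_T = \bigset{\pi_T(C')}{C' \in \mathcal{C}(\A^\aff)}$, and that $\rho_T(\gamma)$ preserves $T(\A)$ was already recorded in the proof that $M(\A;q)$ is $\GAMMA$-invariant. Together with $\rho_T$ being a group homomorphism, this yields a well-defined $\GAMMA$-action on $\mathcal{C}_T$. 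I do not expect a substantial obstacle here: the only point requiring care is the bookkeeping of the sign $\epsilon$, which appears because $\GAMMA$-invariance is phrased at the level of $\{\pm\alpha_1,\ldots,\pm\alpha_n\}$ rather than $\{\alpha_1,\ldots,\alpha_n\}$, together with the resulting map $k \mapsto \epsilon k$ on the integer labels.
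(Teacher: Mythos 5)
Your proof is correct and follows essentially the same route as the paper: both arguments come down to the fact that $\rho(\gamma)$ is a homeomorphism of $L_\mathbb{R}$ preserving $M(\A^\aff)$, hence permutes its connected components (the paper proves this inline via the maximality argument with $\gamma$ and $\gamma^{-1}$, while you cite it as a standard fact). Your explicit computation $\rho(\gamma)(H_i^k) = H_j^{\epsilon k}$ supplies the justification that $\rho(\gamma)$ preserves $M(\A^\aff)$, a step the paper leaves implicit, so nothing is missing.
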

\begin{proof}
	Since $\rho(\gamma)(C)$ is a connected subset of $M(\A^\aff)$, there exists a chamber $C' \in \mathcal{C}(\A^\aff)$ such that $\rho(\gamma)(C) \subseteq C'$.
	Similarly, there exists a chamber $C'' \in \mathcal{C}(\A^\aff)$ such that $\rho(\gamma^{-1})(C') \subseteq C''$.
	The fact $C \subseteq \rho(\gamma^{-1})(C')$ implies that $C = C''$.
	Hence we have $C' \subseteq \rho(\gamma)(C)$.
	Thus $\rho(\gamma)(C) = C' \in \mathcal{C}(\A^\aff)$.
\end{proof}

For $C \in \mathcal{C}_T$, let $\GAMMA_{C}$ denote the isotropy subgroup with respect to $C$.
Since $C$ is invariant under $\GAMMA_C$, we can consider the permutation character $\chi_{C,q}$ of $\GAMMA_C$ on $C \cap T[q]$.
Therefore $\chi_{\A,q}(\gamma)$ can be expressed using permutation characters for each $\gamma \in \GAMMA$.

\begin{theorem}
	For each $\gamma \in \GAMMA$, we have
	\begin{align}
		\chi_{\A,q}(\gamma) = \sum_{C \in \mathcal{C}_T^\gamma}\chi_{C,q}(\gamma). \label{decompgamma}
	\end{align}
\end{theorem}
\begin{proof}
	Suppose that $C \not\in \mathcal{C}_T^\gamma$. 
	Then $\rho_T(\gamma)(t) \not\in C$ for any $t \in C$, that is, $\rho_T(\gamma)(t) \neq t$ for any $t \in C$.
	Hence 
	\begin{align}
		\chi_{\A,q}(\gamma) &= \sum_{C \in \mathcal{C}_T}\#\bigset{t \in C \cap T[q]}{\rho_T(\gamma)(t) = t}\\
		&= \sum_{C \in \mathcal{C}_T^\gamma}\#\bigset{t \in C \cap T[q]}{\rho_T(\gamma)(t) = t}\\
		&= \sum_{C \in \mathcal{C}_T^\gamma}\chi_{C,q}(\gamma).
	\end{align}
\end{proof}

By improving the equation \cref{decompgamma} using the induced characters to a form independent of $\gamma \in \GAMMA$, we can obtain an equivariant version of the equation \cref{decompcqp}.
Let $\GAMMA(C)$ be the $\GAMMA$-orbit of $C$ .
The following is the main theorem of this paper.

\begin{theorem}[Restatement of \cref{thm1.2}]\label{Main result 2} 
	Let $\A$ be a non-empty arrangement.
	For $q \in \mathbb{Z}_{>0}$, we have
	\begin{align}
		\chi_{\A,q} = \sum_{C \in \mathcal{C}_T}\dfrac{1}{\#\GAMMA(C)}\Ind^\GAMMA_{\GAMMA_C}\chi_{C,q}
		 = \sum_{i=1}^k\Ind^\GAMMA_{\GAMMA_{C_i}}\chi_{C_i,q},
	\end{align}
	where $\{C_1,\ldots,C_k\}$ is the set of all representatives of $\GAMMA$-orbits of $\mathcal{C}_T$.
\end{theorem}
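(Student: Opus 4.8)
The plan is to prove the first equality characterwise and then deduce the second by an orbit decomposition. Fix $\gamma \in \GAMMA$ and expand the right-hand side using the induced-character formula from \cref{sec2.1}: for each $C \in \mathcal{C}_T$,
\begin{align}
	\Bigl(\Ind^\GAMMA_{\GAMMA_C}\chi_{C,q}\Bigr)(\gamma) = \dfrac{1}{\#\GAMMA_C}\sum_{\substack{\sigma \in \GAMMA\\ \sigma^{-1}\gamma\sigma \in \GAMMA_C}}\chi_{C,q}(\sigma^{-1}\gamma\sigma).
\end{align}
The two facts I would establish here are: (i) the condition $\sigma^{-1}\gamma\sigma \in \GAMMA_C$ is equivalent to $\sigma(C) \in \mathcal{C}_T^\gamma$; and (ii) under this condition, $\chi_{C,q}(\sigma^{-1}\gamma\sigma) = \chi_{\sigma(C),q}(\gamma)$. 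Both follow from the observation that $t \mapsto \rho_T(\sigma)(t)$ is a bijection of $C \cap T[q]$ onto $\sigma(C) \cap T[q]$ intertwining the action of $\sigma^{-1}\gamma\sigma$ on the source with the action of $\gamma$ on the target, so that fixed points correspond.

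Next I would sum over $C$ with the weight $1/\#\GAMMA(C)$ and apply the orbit--stabilizer identity $\#\GAMMA(C)\cdot\#\GAMMA_C = \#\GAMMA$, reducing the weighted sum to $\tfrac{1}{\#\GAMMA}$ times the double sum of $\chi_{\sigma(C),q}(\gamma)$ over all pairs $(C,\sigma) \in \mathcal{C}_T \times \GAMMA$ subject to $\sigma(C) \in \mathcal{C}_T^\gamma$. The key step is the substitution $D \ceq \sigma(C)$: this is a bijection of $\mathcal{C}_T \times \GAMMA$ onto itself (with inverse $C = \sigma^{-1}(D)$), the constraint becomes $D \in \mathcal{C}_T^\gamma$, and the summand becomes $\chi_{D,q}(\gamma)$, free of $\sigma$. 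The inner sum over $\sigma$ then contributes a factor $\#\GAMMA$ that cancels the prefactor, leaving $\sum_{D \in \mathcal{C}_T^\gamma}\chi_{D,q}(\gamma)$. By \eqref{decompgamma} this equals $\chi_{\A,q}(\gamma)$, which proves the first equality.

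For the second equality I would collect the terms of the first sum over the $\GAMMA$-orbits $\GAMMA(C_1),\ldots,\GAMMA(C_k)$. The fact to verify is that induction is invariant under conjugation of the data: if $C = \tau(C_i)$ then $\GAMMA_C = \tau\GAMMA_{C_i}\tau^{-1}$, and the bijection $t \mapsto \rho_T(\tau)(t)$ gives $\chi_{C,q}(\tau\delta\tau^{-1}) = \chi_{C_i,q}(\delta)$ for all $\delta \in \GAMMA_{C_i}$, whence $\Ind^\GAMMA_{\GAMMA_C}\chi_{C,q} = \Ind^\GAMMA_{\GAMMA_{C_i}}\chi_{C_i,q}$. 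Each of the $\#\GAMMA(C_i)$ chambers in the orbit of $C_i$ therefore contributes the same induced character with weight $1/\#\GAMMA(C_i)$, so summing over the orbit cancels the weight and yields exactly $\Ind^\GAMMA_{\GAMMA_{C_i}}\chi_{C_i,q}$; summing over $i$ gives the claimed expression.

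I expect the reindexing in the middle paragraph to be the main obstacle: the delicate points are correctly identifying $\chi_{C,q}(\sigma^{-1}\gamma\sigma)$ with $\chi_{\sigma(C),q}(\gamma)$ and keeping the orbit--stabilizer cancellation exact, since any slip there would over- or under-count each chamber's contribution. The remaining ingredients---the induced-character formula, \eqref{decompgamma}, and the conjugation-invariance of induction---are standard and enter only routinely.
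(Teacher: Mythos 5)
Your argument is correct and follows essentially the same route as the paper: both expand the induced-character formula, use the identification $\chi_{C,q}(\sigma^{-1}\gamma\sigma) = \chi_{\sigma(C),q}(\gamma)$ (equivalently, that conjugating $\gamma$ corresponds to translating the chamber, with off-orbit terms vanishing), and then invoke the chamber decomposition \eqref{decompgamma} of $T(\A)[q]^\gamma$. The only difference is bookkeeping --- the paper establishes the orbit-representative form directly and lets the weighted sum follow, whereas you prove the weighted form first via the $(C,\sigma)\mapsto(\sigma(C),\sigma)$ reindexing and then collapse orbits using conjugation-invariance of induction; both cancellations are exact.
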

\begin{proof}
	For each representative $C_i$, we obtain
	\begin{align}
		\Bigl( \Ind^\GAMMA_{\GAMMA_{C_i}}\chi_{C_i,q} \Bigr)(\gamma)
		&= \dfrac{1}{\#\GAMMA_{C_i}}\sum_{\substack{\sigma \in \GAMMA\\ \sigma^{-1}\gamma\sigma \in \GAMMA_{C_i}}}\chi_{C_i,q}(\sigma^{-1}\gamma\sigma)\\
		&= \dfrac{1}{\#\GAMMA_{C_i}}\sum_{\substack{\sigma \in \GAMMA\\ \gamma\sigma(C_i) = \sigma(C_i)}}\#\bigl(C_i \cap T[q]\bigr)^{\sigma^{-1}\gamma\sigma}\\
		&= \dfrac{1}{\#\GAMMA_{C_i}}\sum_{\sigma \in \GAMMA}\#\bigl(\rho_T(\sigma)(C_i) \cap T[q]\bigr)^\gamma\\
		&= \#\left(\bigcup_{\sigma \in \GAMMA}\bigl(\rho_T(\sigma)(C_i) \cap T[q]\bigr)\right)^\gamma.
	\end{align}
	Hence 
	\begin{align}
		\chi_{\A,q}(\gamma) &= \#T(\A)[q]^\gamma\\
		&= \#\left( \bigcup_{C \in\mathcal{C}_T}\bigl(C \cap T[q]\bigr) \right)^\gamma\\
		&= \sum_{i=1}^k \#\left(\bigcup_{\sigma \in \GAMMA}\bigl(\rho_T(\sigma)(C_i) \cap T[q]\bigr)\right)^\gamma\\
		&= \sum_{i=1}^k \Bigl( \Ind^\GAMMA_{\GAMMA_{C_i}}\chi_{C_i,q} \Bigr)(\gamma).
	\end{align}
\end{proof}

\begin{example}\label{egdecomp}
	Let $L \cong \mathbb{Z}^2$ and $\GAMMA$ be a cyclic group of order $4$ generated by $\gamma$.
	Suppose that $\GAMMA$ acts linearly on $L$ via $\rho:\GAMMA \lra \GL(L)$ defined by 
	\begin{align}
		\rho(\gamma) : (x_1,x_2) \lmapsto (x_2,-x_1).
	\end{align}
	Let $\A = \{H_1,H_2\}$ be a hyperplane arrangement defined by
	\begin{align}
		H_1 = \bigset{(x_1,x_2) \in L_\mathbb{R}}{2x_1 - x_2 = 0},\quad 
		H_2 = \bigset{(x_1,x_2) \in L_\mathbb{R}}{x_1 + 2x_2 = 0}.
	\end{align}
	We have already computed $\chi_{\A,q}$ in \cref{egperiod10}.
	Here, we consider the expression of $\chi_{\A,q}$ as a sum of induced representations of permutation representations for chambers.
	As shown in \cref{fig10}, we can see that
	\begin{align}
		\mathcal{C}_T = \bigl\{\pi_T(P),\, \pi_T(C_1),\, \pi_T(C_2),\, \pi_T(C_3),\, \pi_T(C_4)\bigr\},
	\end{align}
	 where $P$ is the square with center $(1/2,1/2)$ and $C_1,C_2,C_3,C_4$ are squares around $P$.

	\begin{figure}[b]
		\begin{tikzpicture}[scale = 0.32]
			\draw[gray, very thin, ->] (-3,0) -- (10+2,0);
			\draw[gray, very thin, ->] (0,-2) -- (0,10+3);
			\draw[gray, dashed, very thin] (10,0) -- (10,10) -- (0,10);
			\draw[semithick] (-1/2,-2/2) node[xshift=-6,yshift=-6]{$H_1^0$} -- (1+5+1/2,2+10+2/2);
			\draw[semithick] (-1-1/2+5,-2-2/2) node[xshift=-6,yshift=-6]{$H_1^1$} -- (5+1/2+5,10+2/2);
			\draw[gray, semithick] (-1/2+8,-4-2/2) node[xshift=-6,yshift=-6]{$H_1^{2}$} -- (4+1/2+10,8+2/2);
			\draw[gray, semithick] (-1/2-4,2-2/2) node[xshift=-4,yshift=-6]{$H_1^{-1}$} -- (5+1/2-5+2,4+10+2/2);
			\draw[semithick] (-2-2/2,1+5+1/2) node[xshift=-6,yshift=6]{$H_2^1$} -- (10+2/2,-1/2);
			\draw[semithick] (-2/2,5+1/2+5) node[xshift=-6,yshift=6]{$H_2^2$} -- (2+10+2/2,-1-1/2+5);
			\draw[gray, semithick] (2-2/2,4+1/2+10) node[xshift=-6,yshift=6]{$H_2^3$} -- (4+10+2/2,-1/2+8);
			\draw[gray, semithick] (-4-2/2,2+5+1/2-5) node[xshift=-6,yshift=6]{$H_2^0$} -- (8+2/2,-1/2-4);
			\draw (0, 0) node[bv](00){};
			\draw (5, 5) node[bv](00){};
			\draw (5, 0) node[wv](00){};
			\draw (0, 5) node[wv](00){};
			\draw (3, 1) node{$C_1$};
			\draw (9, 3) node{$C_2$};
			\draw (7, 9) node{$C_3$};
			\draw (1, 7) node{$C_4$};
			\draw[gray] (3, 1+10) node{$t_2(C_1)$};
			\draw[gray] (9-10, 3) node{$t_1^{-1}(C_2)$};
			\draw[gray] (7, 9-10) node{$t_2^{-1}(C_3)$};
			\draw[gray] (1+10, 7) node{$t_1(C_4)$};
			\draw (4, 6) node{$P$};
		\end{tikzpicture}
		
		\caption{$\A^\aff$ in \cref{egperiod10} and \cref{egdecomp}, the points $\bullet$ are fixed by $\rho_T(\gamma)$, the points $\circ$ are fixed by $\rho_T(\gamma^2)$.}
		\label{fig10}
	\end{figure}

	Each squares have the volume $1/5$, and the Ehrhart quasi-polynomials can be computed as
	\begin{align}
		\Ell_P(q) &= \begin{cases*}
			\dfrac{1}{5}(q^2 - 2q + 1) &  $q \equiv 1 \pmod{5}$;\vspace{1mm}\\
			\dfrac{1}{5}(q^2 - 2q + 5) &  $q \equiv 2 \pmod{5}$;\vspace{1mm}\\
			\dfrac{1}{5}(q^2 - 2q - 3) &  $q \equiv 3 \pmod{5}$;\vspace{1mm}\\
			\dfrac{1}{5}(q^2 - 2q - 3) &  $q \equiv 4 \pmod{5}$;\vspace{1mm}\\
			\dfrac{1}{5}(q^2 - 2q + 5) &  $q \equiv 5 \pmod{5}$,
		\end{cases*}\\[4pt]
		\Ell_{C_i}(q) &= \begin{cases*}
			\dfrac{1}{5}(q^2 - 2q + 1) &  $q \equiv 1 \pmod{5}$;\vspace{1mm}\\
			\dfrac{1}{5}(q^2 - 2q)     &  $q \equiv 2 \pmod{5}$;\vspace{1mm}\\
			\dfrac{1}{5}(q^2 - 2q + 2) &  $q \equiv 3 \pmod{5}$;\vspace{1mm}\\
			\dfrac{1}{5}(q^2 - 2q + 2) &  $q \equiv 4 \pmod{5}$;\vspace{1mm}\\
			\dfrac{1}{5}(q^2 - 2q + 5) &  $q \equiv 5 \pmod{5}$.
		\end{cases*}
	\end{align}	
	We can see that $\rho_T(\gamma)(P) = P$ and $\rho_T(\gamma)(C_i) = C_{i+1}$ for $i \in \{1,2,3,4\}$, where we consider that $C_5 = C_1$, that is, $\GAMMA_{P} = \GAMMA$ and $\GAMMA_{C_i} = \{1\}$ for $i \in \{1,2,3,4\}$.
	Moreover, $P^{\gamma^k} = \bigl\{(1/2,1/2)\bigr\}$ for $k \in \mathbb{Z}$.
	Hence 
	\begin{align}
		\chi_{C_i,q} = \chi_\st^{\{1\}}\Ell_{C_i}(q),\quad 
		\Ind^\GAMMA_{\{1\}}\chi_{C_i,q} = \chi_\st^\GAMMA \Ell_{C_i}(q),
	\end{align}
	where $\chi_\st^H$ denote the regular character of a subgroup $H$ of $\GAMMA$, and we obtain
	\begin{align}
		\chi_{P,q} = \Ind^\GAMMA_\GAMMA\chi_{P,q} =
		\begin{cases*}
				\dfrac{1}{20}(\chi_\st^\GAMMA q^2 - 2\chi_\st^\GAMMA q + \chi_\st^\GAMMA)
				 &  $q \equiv 1 \pmod{10}$;\vspace{1mm}\\
				\dfrac{1}{20}(\chi_\st^\GAMMA q^2 - 2\chi_\st^\GAMMA q + 20\boldsymbol{1}) 
				 & $q \equiv 2 \pmod{10}$;\vspace{1mm}\\
				\dfrac{1}{20}(\chi_\st^\GAMMA q^2 - 2\chi_\st^\GAMMA q - 3\chi_\st^\GAMMA)
				 & $q \equiv 3 \pmod{10}$;\vspace{1mm}\\
				\dfrac{1}{20}(\chi_\st^\GAMMA q^2 - 2\chi_\st^\GAMMA q - 8\chi_\st^\GAMMA + 20\boldsymbol{1})
				 & $q \equiv 4 \pmod{10}$;\vspace{1mm}\\
				\dfrac{1}{20}(\chi_\st^\GAMMA q^2 - 2\chi_\st^\GAMMA q + 5\chi_\st^\GAMMA)
				 & $q \equiv 5 \pmod{10}$;\vspace{1mm}\\
				\dfrac{1}{20}(\chi_\st^\GAMMA q^2 - 2\chi_\st^\GAMMA q - 4\chi_\st^\GAMMA + 20\boldsymbol{1})
				 & $q \equiv 6 \pmod{10}$;\vspace{1mm}\\
				\dfrac{1}{20}(\chi_\st^\GAMMA q^2 - 2\chi_\st^\GAMMA q + 5 \chi_\st^\GAMMA)
				 & $q \equiv 7 \pmod{10}$;\vspace{1mm}\\
				\dfrac{1}{20}(\chi_\st^\GAMMA q^2 - 2\chi_\st^\GAMMA q - 8\chi_\st^\GAMMA + 20\boldsymbol{1})
				 & $q \equiv 8 \pmod{10}$;\vspace{1mm}\\
				\dfrac{1}{20}(\chi_\st^\GAMMA q^2 - 2\chi_\st^\GAMMA q - 3\chi_\st^\GAMMA)
				 & $q \equiv 9 \pmod{10}$;\vspace{1mm}\\
				\dfrac{1}{20}(\chi_\st^\GAMMA q^2 - 2\chi_\st^\GAMMA q + 20\boldsymbol{1})
				 & $q \equiv 10 \pmod{10}$.
		\end{cases*}
	\end{align}
	
	These computations imply that 
	\begin{align}
		\chi_{\A,q} &= \Ind^\GAMMA_\GAMMA\chi_{P,q} + \Ind^\GAMMA_{\{1\}}\chi_{C_i,q}\\
		 &= \begin{cases*}
			\dfrac{1}{4}(\chi_\st q^2 - 2\chi_\st q + \chi_\st) & $\gcd\{10,q\} = 1$;\vspace{1mm}\\
			\dfrac{1}{4}(\chi_\st q^2 - 2\chi_\st q + 4\boldsymbol{1}) & $\gcd\{10,q\} = 2$;\vspace{1mm}\\
			\dfrac{1}{4}(\chi_\st q^2 - 2\chi_\st q + 5\chi_\st) & $\gcd\{10,q\} = 5$;\vspace{1mm}\\
			\dfrac{1}{4}(\chi_\st q^2 - 2\chi_\st q + 4\chi_\st + 4\boldsymbol{1}) & $\gcd\{10,q\} = 10$.
		\end{cases*}
	\end{align}
	
\end{example}

\section{Coxeter arrangements  with the Weyl group action}\label{S5}

For a more specific example, we compute $\chi_{\A,q}$ for the Coxeter arrangements.

\subsection{Coxeter arrangements and Weyl groups}\label{sec5.1}
\ \\*
For details of the root systems, the Coxeter arrangements and the Weyl groups, refer to \cite{Bourbaki,Humphreys}.
Let $E = \mathbb{R}^\ell$ be the Euclidean space with inner product $(\cdot,\cdot)$.
Let $\PHI \subseteq E$ be an irreducible root system.
Fix the positive roots $\PHI^+$ and the simple roots $\{\alpha_1,\ldots,\alpha_\ell\} \subseteq \PHI^+$.
Let $\tilde{\alpha}\in \PHI^+$ be the highest root, and it can be expressed by
\begin{align}
	\tilde{\alpha} = c_1\alpha_1 + \cdots + c_\ell\alpha_\ell.
\end{align}
The \textit{root lattice} $Q \ceq Q(\PHI)$ is the lattice generated by $\PHI$.
The \textit{coweight lattice} $Z \ceq Z(\PHI)$ and \textit{coroot lattice} $\veeQ \ceq \veeQ(\PHI)$ are lattices defined by 
\begin{align}
	Z &= Z(\PHI) = \bigset{x \in E}{(\alpha,x) \in \mathbb{Z} \tforall \alpha \in \PHI},\\
	\veeQ &= \veeQ(\PHI) = \sum_{\alpha \in \PHI}\mathbb{Z}\dfrac{2\alpha}{(\alpha,\alpha)}.
\end{align}
Then the coweight lattice $Z$ can be regarded as the dual lattice of the root lattice $Q$.
Thus we can consider each element $\beta \in Q$ to be a element of $Z^\vee$ and a function $\beta : E \lra \mathbb{R}$ defined by 
\begin{align}
	\alpha(x) = (\alpha,x).
\end{align} 
Furthermore, the coroot lattice $\veeQ$ is a subgroup of the coweight lattice $Z$ with a finite index $f \ceq (Z:\veeQ)$ (called an \textit{index of connection} in Lie theory).
It is known in \cite[\S4.9, Theorem]{Humphreys} that 
\begin{align}
	\#W = f \cdot \ell! \cdot c_1 \cdots c_\ell.
\end{align}
Let $\{\varpivee_1,\ldots,\varpivee_\ell\} \subseteq Z$ be the dual basis to the simple roots $\{\alpha_1,\ldots,\alpha_\ell\}$, that is, 
\begin{align}
	(\alpha_i,\varpivee_j) = \delta_{ij}.
\end{align}

Let $W \ceq W(\PHI)$ be the Weyl group of $\PHI$ generated by the reflections $s_{\alpha} : E \lra E$ with respect to $\alpha \in \PHI$:
\begin{align}
	s_{\alpha}(x) = x - \dfrac{2(\alpha,x)}{(\alpha,\alpha)}\alpha.
\end{align}
The root system $\PHI$ is invariant under $W$.
Then the Weyl group $W$ acts on $E$.
Thus $W$-action $\rho_Q : W \lra \GL(Q)$ and $\rho_Z : W \lra \GL(Z)$ are induced, and they satisfy
\begin{align}
	\rho_Q(w) : \beta \lmapsto w(\beta),\quad \rho_Z(w) : x \lmapsto w(x)
\end{align}
for $w \in W$, $\beta \in Q$ and $x \in Z$.
Moreover, the inner product $(\cdot,\cdot)$ is invariant under $W$.
Therefore we have $\rho_Q = \rho_Z^\vee$ since
\begin{align}
	\rho_Q(\beta)(x) = (\rho_Q(\beta),x) = (w(\beta),x) = (\beta,w^{-1}(x)) = (\beta,\rho_Z(w^{-1})(x)) = \rho_Z^\vee(\beta)(x)
\end{align}
for $\beta \in Q$ and $x \in Z$.

Let $\A \ceq \A(\PHI)$ be the Coxeter arrangement, which is the collection of the reflecting hyperplanes for all reflections in $W$, that is, 
\begin{align}
	\A = \A(\PHI) = \bigset{H_{\alpha}}{\alpha \in \PHI^+},\qquad H_{\alpha} = H_{-\alpha} = \bigset{x \in E}{(\alpha,x) = 0}\quad (\alpha \in \PHI^+).
\end{align}
Then $\A$ is $W$-invariant since
\begin{align}
	wH_\alpha &= \bigset{w(x) \in E}{(\alpha,x) = 0}\\
	 &= \bigset{x \in E}{(\alpha,\rho_Z(w^{-1})(x)) = 0}\\
	 &= \bigset{x \in E}{(\rho_Q(w)(\alpha),x) = 0}\\
	 &= H_{\rho_Q(w)(\alpha)}.
\end{align}

\subsection{Fundamental alcoves}\label{sec5.2}
\ \\*
The \textit{fundamental alcove} $A_\circ$ is the chamber of $\A^\aff$ defined by
\begin{align}
	A_\circ  = \Bigset{x \in E}{\begin{lgathered}
			(\alpha_i,x) > 0 \tforall i \in \{1,\ldots,\ell-1\},\\
			(\tilde{\alpha},x) < 1
		\end{lgathered}
	}
	= H_{\alpha_1}^{0,+} \cap  \cdots \cap H_{\alpha_{\ell}}^{0,+} \cap H_{\tilde{\alpha}}^{1,-}
\end{align}
(a chamber of $\A^\aff$ is often called an \textit{alcove}).
The closure $\overline{A_\circ}$ is the convex hull
\begin{align}
	\conv\left\{0,\, \frac{\varpivee_1}{c_1},\, \ldots,\, \frac{\varpivee_{\ell}}{c_\ell}\right\},
\end{align}
where $c_1,\ldots,c_\ell$ are the coefficients of the highest root $\tilde{\alpha}$.
Let $P^\diamond$ denote the fundamental domain of $Z$ defined by
\begin{align}
	P^\diamond = \sum_{i=1}^{\ell}(0,1]\varpivee_i = \Bigset{\sum_{i=1}^{\ell}k_i\varpivee_i \in E}{0 < k_i \leq 1 \tforall i \in \{1,\ldots,\ell\}},
\end{align}
and let $\mathcal{C}^\diamond$ denote the set of chambers of $\A^\aff$ contained in $P^\diamond$.
The fundamemtal alcove $A_\circ$ belongs to $\mathcal{C}^\diamond$.
Moreover, it is known that $\#\mathcal{C}^\diamond = \#W / f = \ell! \cdot c_1 \cdots c_\ell$ (\cite[\S4.9]{Humphreys} and see also \cite[\S2.3]{YoshinagaW}).
Let $T \ceq E/Z$ be the $\ell$-torus and let $\pi_T : E \lra T$ be the natural projection.
Then 
\begin{align}
	T = \bigset{\pi_T(x)}{x \in P^\diamond},\qquad \mathcal{C}_T = \bigset{\pi_T(C)}{C \in \mathcal{C}^\diamond}.
\end{align}

The Weyl group $W$ acts on $T$ via $\rho_T : W \lra \GL(T)$ satisfying
\begin{align}
	\rho_T(w) \circ \pi_T = \pi_T \circ \rho_Z(w).
\end{align}
To compute $\chi_{\A,q}$, we consider the $W$-orbits of $\mathcal{C}_T$.
Let $W(A_\circ)$ denote the $W$-orbit of $\pi_T(A_\circ)$.

\begin{proposition}
	The $W$-orbit $W(A_\circ)$ is equal to $\mathcal{C}_T$.
\end{proposition}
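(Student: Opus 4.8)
The plan is to reduce the statement to the standard structure theory of the affine Weyl group. Recall that here $\A^\aff$ is the affine arrangement whose hyperplanes are the $H_\alpha^k = \{x : (\alpha,x) = k\}$ with $\alpha \in \PHI$, $k \in \mathbb{Z}$, that its chambers are the alcoves, and that $\mathcal{C}_T = \{\pi_T(C) : C \in \mathcal{C}(\A^\aff)\}$. Since $W$ acts on $\mathcal{C}_T$, the inclusion $W(A_\circ) \subseteq \mathcal{C}_T$ is immediate. Thus it suffices to prove the reverse inclusion, namely that every chamber of $T(\A)$ lies in the $W$-orbit of $\pi_T(A_\circ)$.

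The key input is the classical fact that the affine Weyl group $W_a$, generated by the reflections $s_{\alpha,k}$ in the hyperplanes $H_\alpha^k$, decomposes as a semidirect product $W_a = \veeQ \rtimes W$, where $\veeQ$ acts by translations, and that $W_a$ acts simply transitively on the set of alcoves with $A_\circ$ as fundamental domain (see \cite[\S4.3--4.5]{Humphreys} and \cite{Bourbaki}). The point that the translation lattice is exactly the coroot lattice $\veeQ$ comes from $s_{\alpha,k}\,s_{\alpha,0}$ being the translation by $k \cdot 2\alpha/(\alpha,\alpha)$. Consequently, any alcove $C \in \mathcal{C}(\A^\aff)$ can be written as $C = \rho_Z(w)(A_\circ) + \lambda$ for some $w \in W$ and some $\lambda \in \veeQ$.

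Now I would project to $T = E/Z$. Since $\veeQ$ is a sublattice of the coweight lattice $Z$ (of finite index $f$), the translation by $\lambda$ becomes trivial modulo $Z$, so
\[
\pi_T(C) = \pi_T\bigl(\rho_Z(w)(A_\circ) + \lambda\bigr) = \pi_T\bigl(\rho_Z(w)(A_\circ)\bigr) = \rho_T(w)\bigl(\pi_T(A_\circ)\bigr),
\]
which lies in $W(A_\circ)$. As $C$ ranges over all alcoves, $\pi_T(C)$ ranges over all of $\mathcal{C}_T$, yielding $\mathcal{C}_T \subseteq W(A_\circ)$ and hence the desired equality.

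The only genuine content, and the step I would single out, is the identification $\veeQ \subseteq Z$ together with the fact that $\veeQ$ is precisely the translation lattice of $W_a$: this is exactly what forces the translation part to vanish in $T = E/Z$, and it explains why the coweight lattice (rather than $\veeQ$ itself) is the correct lattice for defining the torus $T$. As an alternative route one could argue by cardinality, using $\#\mathcal{C}_T = \#W/f$ from \cref{sec5.1} and reducing to showing that the stabilizer of $\pi_T(A_\circ)$ in $W$ has order $f$; the direct surjectivity argument above is cleaner since it bypasses any computation of the stabilizer. The one bookkeeping point to handle carefully is the ordering in the semidirect product (whether translations are written on the left or the right of $w$), but this does not affect the conclusion, as the translation component is annihilated by $\pi_T$ in either convention.
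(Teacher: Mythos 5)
Your proof is correct and follows essentially the same route as the paper: both invoke the simple transitivity of the affine Weyl group $W_\aff = \veeQ \rtimes W$ on the set of alcoves, write an arbitrary alcove as a finite Weyl group element applied to a $\veeQ$-translate of $A_\circ$, and observe that the translation part is annihilated by $\pi_T$ because $\veeQ \subseteq Z$. The ordering issue in the semidirect product that you flag is handled in the paper by writing $w_0(A_\circ + t)$, and, as you note, it is immaterial to the conclusion.
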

\begin{proof}
	Let $W_\aff$ be the affine Weyl group, which is the semidirect product of $W$ and the translation group corresponding to the coroot lattice $\veeQ$.
	Then the closure $\overline{A_\circ}$ is a fundamental domain of $W_\aff$.
	Furthermore, $W_\aff$ acts simply transitively on the set $\mathcal{C}(\A^\aff)$ of chambers of $\A$ (see \cite[\S4]{Humphreys}).
	
	Let $C \in \mathcal{C}_T$.
	Then $\pi_T(C) \in \mathcal{C}^\diamond \subseteq \mathcal{C}(\A^\aff)$.
	Simple transitivity of $W$-action on $\mathcal{C}(A^\aff)$ implies that there exists $w \in W_\aff$ such that $w(A_\circ) = C$.
	Hence there exist $w_0 \in W$ and $t \in \veeQ$ such that $w_0(A_\circ + t) = C$.
	Since $\veeQ \subseteq Z$, we have $\pi_T(A_\circ) = \pi_T(A_\circ + t)$.
	Therefore $\pi_T(C) = \rho_T(w_0)\bigl(\pi_T(A_\circ)\bigr) \in W(A_\circ)$.
\end{proof}

Let $W_{A_\circ}$ denote the isotropy group of $\pi_T(A_\circ)$:
\begin{align}
	W_{A_\circ} = \bigset{w \in W}{\rho_T(w)\bigl(\pi_T(A_\circ)\bigr) = \pi_T(A_\circ)}.
\end{align}
Note that $\#W_{A_\circ} = f$ since $\#W(A_\circ) = \#\mathcal{C}_T = \#\mathcal{C}^\diamond$.
Using \cref{Main result 2}, we have the following result.

\begin{theorem}[Restatement of \cref{thm1.3}]\label{weylver}
	For $q \in \mathbb{Z}_{>0}$, we have
	\begin{align}
		\chi_{\A,q} = \Ind^{W}_{W_{{A_\circ}}}\chi_{A_\circ,q}. \label{mt3}
	\end{align}
\end{theorem}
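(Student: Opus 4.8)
The plan is to deduce this directly from the general decomposition established in \cref{Main result 2}, the only additional input being that the $W$-action on $\mathcal{C}_T$ is transitive. Recall that \cref{Main result 2} writes $\chi_{\A,q}$ as a sum of induced characters indexed by a choice of representatives $\{C_1,\ldots,C_k\}$ of the $W$-orbits of $\mathcal{C}_T$, namely $\chi_{\A,q} = \sum_{i=1}^k \Ind^W_{W_{C_i}}\chi_{C_i,q}$, where $W_{C_i}$ is the isotropy subgroup of the representative $C_i$. The whole content of the present theorem is thus to identify this orbit data in the Coxeter setting.

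First I would invoke the proposition just proved, that $W(A_\circ) = \mathcal{C}_T$. This asserts precisely that $W$ acts transitively on $\mathcal{C}_T$, so there is a single orbit; hence one may take $k=1$ and choose $C_1 = \pi_T(A_\circ)$ (written simply $A_\circ$ under the convention of \cref{S4}) as the unique orbit representative. By definition of the isotropy subgroup, the stabilizer of this representative is $W_{C_1} = W_{A_\circ}$, matching the notation fixed above, for which $\#W_{A_\circ} = f$, the index of connection.

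Substituting $k=1$, $C_1 = A_\circ$, and $W_{C_1} = W_{A_\circ}$ into the formula of \cref{Main result 2} then yields $\chi_{\A,q} = \Ind^W_{W_{A_\circ}}\chi_{A_\circ,q}$, which is the assertion of \cref{mt3}.

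I do not expect a substantial obstacle at this final stage, since the two facts being combined, the decomposition theorem and the transitivity of the $W$-action on the alcoves modulo $Z$, are already in hand. The only point requiring care is the bookkeeping of identifications: one must confirm that the permutation character $\chi_{A_\circ,q}$ of $W_{A_\circ}$ on $\pi_T(A_\circ) \cap T[q]$ appearing in the statement is literally the object attached to the representative $C_1$ in \cref{Main result 2}, and that the stabilizer computed through the simply transitive action of the affine Weyl group agrees with the abstract isotropy subgroup $\GAMMA_{C_1}$ specialized to $\GAMMA = W$. Both are immediate from the definitions, so the proof reduces to this single substitution.
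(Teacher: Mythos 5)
Your proposal is correct and is exactly the paper's argument: the paper proves the proposition that $W(A_\circ) = \mathcal{C}_T$ and then states the theorem as an immediate consequence of \cref{Main result 2} with a single orbit representative $C_1 = \pi_T(A_\circ)$ and stabilizer $W_{A_\circ}$. No further commentary is needed.
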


In particular, by substituting $1$ into \cref{mt3}, we have
\begin{align}
	\chi_\A^\quasi(q) &= \dfrac{\#W}{\#W_{{A_\circ}}}\Ell_{A_\circ}(q) = \dfrac{\#W}{f}(-1)^\ell\Ell_{\overline{A_\circ}}(-q). \label{cqpeqp}
\end{align}
The above equation had already been obtained in \cite[Proposition 3.7]{YoshinagaW} (see also \cite{KamiyaTakemuraTeraoR,Suter}).
Hence the equation \cref{mt3} can be considered to be an equivariant version of \cref{cqpeqp}.

\subsection{Case of type $A_\ell$}\label{sec5.3}
\ \\*
Let $\{e_1,\ldots,e_{\ell+1}\}$ be the standard basis for the Euclidean space $\mathbb{R}^{\ell+1}$.
Define $E$ as the $\ell$-dimensional subspace of $\mathbb{R}^{\ell+1}$ consisting the points the sum of whose coordinates is zero.
The root system of type $A_\ell$ is defined by 
\begin{align}
	\PHI = \PHI(A_\ell) = \bigset{e_i-e_j \in E}{i,j \in \{1,\ldots,\ell+1\},\ i \neq j}.
\end{align}
Let $\alpha_i \ceq e_i - e_{i+1}$ for $i \in \{1,\ldots,\ell\}$.
Then $\{\alpha_1,\ldots,\alpha_\ell\}$ forms simple roots of $\PHI$, and its dual basis $\{\varpivee_1,\ldots,\varpivee_{\ell}\}$ is expressed as 
\begin{align}
	\varpivee_j = (e_1 + \cdots + e_j) - \dfrac{j}{\ell+1}e_\all,
\end{align}
where $e_\all = e_1 + \cdots + e_{\ell+1}$.
We suppose that $\varpivee_0 \ceq 0$.
The Weyl group of type $A_\ell$ is the symmetric group $\mathfrak{S}_{\ell+1}$ of degree $\ell+1$.
The group $\mathfrak{S}_{\ell+1}$ acts on $Q$ and $Z$ as the permutations of the standard basis $\{e_1,\ldots,e_{\ell+1}\}$, that is,
\begin{align}
	\sigma : e_i \lmapsto e_{\sigma(i)}
\end{align}
for $\sigma \in \mathfrak{S}_{\ell+1}$ and $i \in \{1,\ldots,\ell+1\}$.
For example, let $\sigma \ceq (1\ 2\ 3\ 4) \in \mathfrak{S}_{4}$ be a cyclic permutation.
Then 
\begin{align}
	\rho_Q(\sigma)(\alpha_1) = e_2 - e_3 = \alpha_2,\qquad
	\rho_Z(\sigma)(\varpivee_1) = e_2 - \dfrac{1}{4}e_\all = \varpivee_2 - \varpivee_1.
\end{align}

The Coxeter arrangement of type $A_\ell$ is 
\begin{align}
	\A = \A(A_\ell) = \bigset{H_{ij}}{1 \leq i < j \leq \ell+1},
\end{align}
where 
\begin{align}
	H_{ij} = \bigset{x \in E}{(e_i-e_j,\, x) = 0}.
\end{align}

Let $A_\circ$ is the fundamental alcove of type $A_\ell$.
Its closure is the convex hull
\begin{align}
	\overline{A_\circ} = \conv\bigl\{\varpivee_0,\varpivee_1,\ldots,\varpivee_\ell\bigr\}
\end{align}
and is a regular $\ell$-simplex.

\begin{proposition}
	The isotropy group $(\mathfrak{S}_{\ell+1})_{{A_\circ}}$ of $\pi_T(A_\circ)$ is a cyclic group generated by the cyclic permutation $(1\ 2\ \cdots\ \ell+1)$.
\end{proposition}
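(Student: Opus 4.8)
The plan is to exploit the fact recorded just before the theorem that $\#W_{A_\circ} = f$, where $f = (Z:\veeQ)$ is the index of connection, and reduce the whole statement to a single membership check. For type $A_\ell$ the highest root is $\alpha_1 + \cdots + \alpha_\ell$, so $c_1 = \cdots = c_\ell = 1$; substituting $\#W = \#\mathfrak{S}_{\ell+1} = (\ell+1)!$ into the formula $\#W = f\cdot \ell!\cdot c_1\cdots c_\ell$ gives $f = \ell+1$. On the other hand the cyclic permutation $c \ceq (1\ 2\ \cdots\ \ell+1)$ has order exactly $\ell+1$ in $\mathfrak{S}_{\ell+1}$. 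Hence it suffices to prove the one containment $c \in (\mathfrak{S}_{\ell+1})_{A_\circ}$: once this is known, $\langle c\rangle$ is a cyclic subgroup of $(\mathfrak{S}_{\ell+1})_{A_\circ}$ of order $\ell+1 = \#(\mathfrak{S}_{\ell+1})_{A_\circ}$, which forces $(\mathfrak{S}_{\ell+1})_{A_\circ} = \langle c\rangle$.

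To verify $c \in (\mathfrak{S}_{\ell+1})_{A_\circ}$ I would compute the action of $\rho_Z(c)$ on the vertices $\varpivee_0 = 0, \varpivee_1, \ldots, \varpivee_\ell$ of the simplex $\overline{A_\circ} = \conv\{\varpivee_0, \ldots, \varpivee_\ell\}$. Since $c$ permutes the standard basis by $e_i \mapsto e_{i+1}$ (indices modulo $\ell+1$) and fixes $e_\all$, a direct substitution into $\varpivee_j = (e_1 + \cdots + e_j) - \tfrac{j}{\ell+1}e_\all$ should give $\rho_Z(c)(\varpivee_j) = \varpivee_{j+1} - \varpivee_1$ for $0 \le j \le \ell-1$ and $\rho_Z(c)(\varpivee_\ell) = -\varpivee_1$. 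Consequently the affine map $x \mapsto \rho_Z(c)(x) + \varpivee_1$ cyclically permutes the vertices by $\varpivee_0 \to \varpivee_1 \to \cdots \to \varpivee_\ell \to \varpivee_0$, hence carries $\overline{A_\circ}$ onto itself; equivalently $\rho_Z(c)(\overline{A_\circ}) = \overline{A_\circ} - \varpivee_1$. As $\varpivee_1 \in Z$, passing to the torus $T = E/Z$ yields $\rho_T(c)(\pi_T(A_\circ)) = \pi_T(A_\circ)$, i.e. $c \in (\mathfrak{S}_{\ell+1})_{A_\circ}$, which is exactly what the reduction in the first paragraph requires.

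The only genuine work is the vertex computation, and its sole subtlety is bookkeeping: the intermediate expressions involve $e_\all \notin E$, so one must keep track of the $\tfrac{1}{\ell+1}e_\all$ corrections and use $e_1 = \varpivee_1 + \tfrac{1}{\ell+1}e_\all$ to re-express everything back in the coweight basis $\{\varpivee_1,\ldots,\varpivee_\ell\}$. Geometrically the step is transparent: it states that $c$ acts on the regular $\ell$-simplex $\pi_T(\overline{A_\circ})$ inside $T$ as the order-$(\ell+1)$ rotation cycling its vertices, matching the rotational symmetry of the extended Dynkin diagram $\tilde A_\ell$. Because this geometric picture pins down the answer in advance, the calculation is guaranteed to close, and no real obstacle remains once $f = \ell+1$ has been recorded.
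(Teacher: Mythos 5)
Your proof is correct, but it takes a genuinely different route from the paper. The paper works directly with the centroid $c = \frac{1}{\ell+1}(\varpivee_1 + \cdots + \varpivee_\ell)$ of $A_\circ$: it observes that $\sigma$ stabilizes $\pi_T(A_\circ)$ iff it fixes $\pi_T(c)$, then carries out an explicit coefficient comparison between $c$ and $\rho_Z(\sigma)(c) + z$ for $z \in Z$ to show that the \emph{only} solutions are $\sigma = (1\ 2\ \cdots\ \ell+1)^{k}$; this establishes both containments at once but costs a somewhat delicate bookkeeping argument (the equations the paper labels \cref{dii}). You instead import the general fact $\#W_{A_\circ} = f$ from \cref{sec5.2} (which follows from orbit--stabilizer together with $\#\mathcal{C}^\diamond = \#W/f$), specialize to $f = \ell+1$ for type $A_\ell$ via $\#W = f\cdot\ell!\cdot c_1\cdots c_\ell$ with all $c_i = 1$, and thereby reduce the proposition to the single inclusion $\langle(1\ 2\ \cdots\ \ell+1)\rangle \subseteq (\mathfrak{S}_{\ell+1})_{A_\circ}$, which you verify by the vertex computation $\rho_Z(\sigma)(\varpivee_j) = \varpivee_{j+1} - \varpivee_1$ (this is exactly the paper's formula \cref{fcp} with $k=1$, and your translation by $\varpivee_1 \in Z$ correctly kills the shift on the torus). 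Your argument is shorter and less error-prone, at the price of leaning on the counting identity from the previous subsection; the paper's argument is self-contained within the type $A_\ell$ computation and, as a by-product, exhibits explicitly which lattice translate $z = \varpivee_{\ell+1-i_{\ell+1}}$ realizes each stabilizing element --- information that is reused implicitly in the subsequent fixed-point computations. Both are valid; yours is the cleaner proof of the proposition as stated.
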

\begin{proof}
	Let $c$ be the centroid of $A_\circ$.
	Then
	\begin{align}
		c = \dfrac{\varpivee_1 + \cdots + \varpivee_{\ell}}{\ell+1} = \sum_{i=1}^{\ell+1}\dfrac{\ell+1-i}{\ell+1}e_i - \dfrac{\ell}{2(\ell+1)}e_\all.
	\end{align}
	Note that $\rho_T(\sigma)\bigl(\pi_T(A_\circ)\bigr) = \pi_T(A_\circ)$ if and only if $\rho_T(\sigma)\bigl(\pi_T(c)\bigr) = \pi_T(c)$.
	For $\sigma \in \mathfrak{S}_{\ell+1}$, 
	\begin{align}
		\rho_Z(\sigma)(c) = \sum_{i=1}^{\ell+1}\dfrac{\ell+1-i}{\ell+1}e_{\sigma(i)} - \dfrac{\ell}{2(\ell+1)}e_\all.
	\end{align}
	Give $z = z_1\varpivee_1 + \cdots + z_\ell\varpivee_\ell \in Z$ satisfying $\rho_Z(\sigma)(c) \in P^\diamond - z$, where
	\begin{align}
		P^\diamond - z = \bigset{x-z \in E'}{x \in P^\diamond}.
	\end{align}
	Since $\rho_Z(\sigma)$ is a transformation fixing $0$, we can see that $z_1,\ldots,z_{\ell} \in \{0,1\}$.
	
	Suppose that $\rho_T(\sigma)\bigl(\pi_T(c)\bigr) = \pi_T(c)$.
	Then $c = \rho_Z(\sigma)(c) + z$.
	Let $i_j \ceq \sigma^{-1}(j)$ for $j \in \{1,\ldots,\ell+1\}$.
	Since 
	\begin{align}
		c &= \dfrac{\ell}{\ell+1}e_1 + \cdots + 0e_{\ell+1} -\dfrac{\ell}{2(\ell+1)}e_\all,\\ 
		\rho_Z(\sigma)(c) &= \dfrac{\ell+1 - i_1}{\ell+1}e_1 + \cdots + \dfrac{\ell+1 - i_{\ell+1}}{\ell+1}e_{\ell+1} - \dfrac{\ell}{2(\ell+1)}e_\all,
	\end{align}
	by comparing the coefficients of $e_1$ and $e_{\ell+1}$, respectively, we obtain 
	\begin{align}
		\sum_{z_i = 1}i = \ell+1 - i_{\ell+1},\qquad \sum_{z_i = 1}(\ell+1-i) = i_1 - 1, \label{dii}
	\end{align}
	and hence 
	\begin{align}
		i_1 - i_{\ell+1} = \#\bigset{i}{z_i = 1} \cdot (\ell+1) - (\ell+1) + 1.
	\end{align}
	Therefore we have $\#\Bigset{i}{z_i = 1} \leq 1$.
	When $\#\Bigset{i}{z_i = 1} =0$, then $\sigma = 1$ since $\rho_Z(\sigma)(c) = c$.
	Suppose that $\#\Bigset{i}{z_i = 1} = 1$.
	The left-side equation of \cref{dii} implies that $z_{\ell+1 - i_{\ell+1}} = 1$, that is, $z = \varpivee_{\ell+1-i_{\ell+1}}$.
	Then
	\begin{align}
		\rho_Z(\sigma)(c) &= c - \varpivee_{\ell+1-i_{\ell+1}}\\
		&= \dfrac{\ell-i_{\ell+1}}{\ell+1}e_1 + \cdots + \dfrac{1}{\ell+1}e_{\ell-i_{\ell+1}} + 0e_{\ell+1-i_{\ell+1}} + \dfrac{\ell}{\ell+1}e_{\ell+2-i_{\ell+1}} + \cdots + \dfrac{\ell+1-i_{\ell+1}}{\ell+1}e_{\ell+1} - \dfrac{\ell}{2(\ell+1)}e_\all.
	\end{align}
	Thus $\sigma = (1\ 2\ \cdots \ \ell+1)^{i_{\ell+1}}$, and hence $(\mathfrak{S}_{\ell+1})_{{A_\circ}} = \bigl\langle(1\ 2\ \cdots\ \ell+1)\bigr\rangle$.
\end{proof}

Fix a permutation $\sigma \ceq (1\ 2\ \cdots \ \ell+1)$ and an integer $k \in \{1,\ldots,\ell+1\}$.
Then we have
\begin{align}
	\rho_Z(\sigma^k)(\varpivee_j) = \varpivee_{j+k} - \varpivee_k \quad (j \in \{0,\ldots,\ell\}),
	 \label{fcp}
\end{align}
where we consider that $\varpivee_t = \varpivee_j$ for $0 \leq j < \ell+1 \leq t$ with $j \equiv t \pmod{\ell+1}$. 
For example, let $\ell = 3$.
Then
\begin{align}
	\rho_Z(\sigma^2)(\varpivee_3) = \varpivee_5 - \varpivee_2 = \varpivee_1 - \varpivee_2,\qquad \rho_Z(\sigma^3)(\varpivee_1) = \varpivee_4 - \varpivee_3 = -\varpivee_3.
\end{align}

The \textit{cycle type} of $\tau \in \mathfrak{S}_{\ell+1}$ is the decreasing sequence of the length of the cycles in the cycle decomposition of $\tau$, which is an integer partition of $\ell+1$.
Two permutations of $\mathfrak{S}_{\ell+1}$ are conjugate if and only if they have the same cycle type.

The permutation $\sigma^k = (1\ 2\ \cdots\ \ell+1)^k$ can be decomposed into $g$ cyclic permutations of length $d$, that is, $\sigma^k$ has the cycle type $(d,\ldots,d)$, where
\begin{align}
	g = \gcd\{\ell+1,\, k\},\qquad d = \dfrac{\ell+1}{g}.
\end{align}

For each $b \in \{0,\ldots,g-1\}$, let
\begin{align}
	J_{g,b} 
	\ceq \bigset{j \in \{0,\ldots,\ell\}}{j \equiv b \pmod{g}},\qquad 
\end{align}
and define 
\begin{align}
	\bar{f}^g_b \ceq \sum_{j \in J_{g,b}}\varpivee_j,\qquad 
	f^g_b \ceq \dfrac{1}{d} \cdot \bar{f}^g_b.
\end{align}
Then $f^g_b \in \overline{A_\circ}$ since $\#J_{g,b} = d$.
For example, let $\ell = 3$.
Then 
\begin{align}
	f^1_0 = \dfrac{\varpivee_1 + \varpivee_2 + \varpivee_3}{4},\\
	f^2_0 = \dfrac{\varpivee_2}{2},\quad f^2_1 = \dfrac{\varpivee_1 + \varpivee_3}{2},\\
	f^4_0 = 0,\quad f^4_1 = \varpivee_1,\quad f^4_2 = \varpivee_2,\quad f^4_3 = \varpivee_3.
\end{align}

\begin{proposition}
	Let $(\overline{A_\circ})^{\sigma^k}$ be the set of points in  $\pi_T(\overline{A_\circ})$ fixed by $\rho_T(\sigma^k)$.
	Then we have
	\begin{align}
		(\overline{A_\circ})^{\sigma^k} &= \conv\bigset{f^g_b}{g \in \{0,\ldots,g-1\}}.\label{fconv}
	\end{align}
\end{proposition}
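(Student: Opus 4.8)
The plan is to realize the action of $\rho_T(\sigma^k)$ on $\pi_T(\overline{A_\circ})$ by a genuine affine automorphism of the simplex $\overline{A_\circ}$ and then read off its fixed locus in barycentric coordinates. Since $\sigma \in (\mathfrak{S}_{\ell+1})_{A_\circ}$ by the preceding proposition, also $\sigma^k \in (\mathfrak{S}_{\ell+1})_{A_\circ}$, so $\rho_T(\sigma^k)$ stabilizes $\pi_T(\overline{A_\circ})$. Using \cref{fcp}, namely $\rho_Z(\sigma^k)(\varpivee_j) = \varpivee_{j+k} - \varpivee_k$ with indices read modulo $\ell+1$, together with $\varpivee_k \in Z$, the affine map $\phi$ obtained from $\rho_Z(\sigma^k)$ by post-composing with the translation by $\varpivee_k$ satisfies $\phi(\varpivee_j) = \varpivee_{j+k}$ and induces the same map as $\rho_Z(\sigma^k)$ on $T = E/Z$. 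Thus $\phi$ is an affine automorphism of $\overline{A_\circ} = \conv\{\varpivee_0,\ldots,\varpivee_\ell\}$ cyclically permuting the vertices by $j \mapsto j+k$, and a point of $\pi_T(\overline{A_\circ})$ is fixed by $\rho_T(\sigma^k)$ exactly when its lift $x$ satisfies $\phi(x)=x$.

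First I would determine the cycles of the vertex permutation $j \mapsto j+k$ on $\mathbb{Z}/(\ell+1)\mathbb{Z}$. The subgroup generated by $k$ is the subgroup of index $g = \gcd\{\ell+1,k\}$, that is $g\,\mathbb{Z}/(\ell+1)\mathbb{Z}$, of order $d = (\ell+1)/g$; hence the orbit of $j$ is its residue class modulo $g$. These orbits are precisely the sets $J_{g,0},\ldots,J_{g,g-1}$, each of cardinality $d$, and the barycenter of the $b$-th orbit is $f^g_b = \tfrac{1}{d}\bar{f}^g_b$. Since $\phi$ is affine and permutes each orbit, each $f^g_b$ is fixed by $\phi$, whence $\conv\{f^g_b : b\} \subseteq \{x \in \overline{A_\circ} : \phi(x)=x\}$.

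For the reverse inclusion I would use barycentric coordinates: write $x = \sum_{j=0}^\ell \lambda_j \varpivee_j$ with $\lambda_j \ge 0$ and $\sum_j \lambda_j = 1$, uniquely because the $\varpivee_j$ are affinely independent. As $\phi$ is affine and sends $\varpivee_j$ to $\varpivee_{j+k}$, it acts on coordinates by $\lambda_j \mapsto \lambda_{j-k}$, so $\phi(x)=x$ iff $\lambda_j = \lambda_{j-k}$ for all $j$, i.e. iff $\lambda$ is constant on every orbit $J_{g,b}$. Writing $\mu_b$ for the common value on $J_{g,b}$ and setting $\nu_b = d\mu_b$, the normalization $\sum_j \lambda_j = 1$ becomes $\sum_b \nu_b = 1$ while $x = \sum_b \mu_b \bar{f}^g_b = \sum_b \nu_b f^g_b$; hence $x \in \conv\{f^g_b : b\}$, giving the claimed equality.

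The step I expect to require the most care is the passage between torus fixed points and genuine fixed points of $\phi$. The projection $\pi_T$ is not injective on $\overline{A_\circ}$: all $\ell+1$ vertices $\varpivee_0,\ldots,\varpivee_\ell$ lie in $Z$ and therefore collapse to the single torus point $\pi_T(0)$, which is consequently fixed by $\rho_T(\sigma^k)$ even though $\phi$ rotates the vertices nontrivially. I would justify that this is the only such coincidence via the bound $\lambda_{j-k} - \lambda_j \in [-1,1]$: the condition $\phi(x)-x \in Z$ forces each $\lambda_{j-k}-\lambda_j \in \{-1,0,1\}$, and a nonzero value can occur only when $x$ is a vertex. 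Interpreting the fixed set of $\rho_T(\sigma^k)$ on $\pi_T(\overline{A_\circ})$ through its canonical affine lift $\phi$ (equivalently, restricting to the relative interior, as is needed for the subsequent equivariant Ehrhart computation) removes these boundary artefacts and yields exactly the fixed subpolytope $\conv\{f^g_b : b\}$, which one checks is a genuine $(g-1)$-simplex since the $f^g_b$ have pairwise disjoint supports in the basis $\{\varpivee_1,\ldots,\varpivee_\ell\}$.
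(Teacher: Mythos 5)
Your argument is correct and takes essentially the same route as the paper: both use the relation $\rho_Z(\sigma^k)(\varpivee_j) = \varpivee_{j+k} - \varpivee_k$ to turn the action into the cyclic vertex permutation $j \mapsto j+k$ of the simplex $\overline{A_\circ}$, observe that the orbit barycenters $f^g_b$ are fixed, and establish the reverse inclusion in barycentric coordinates. The one place you go beyond the paper is your final paragraph: the paper's deduction ``therefore $x_j = x_{j+k}$'' tacitly assumes $\pi_T$ is injective on $\overline{A_\circ}$, which fails exactly at the vertices (they all collapse to $\pi_T(0)$, a fixed torus point that does not lie in $\pi_T\bigl(\conv\{f^g_b\}\bigr)$ when $d>1$), so your reading of the statement via the affine lift $\phi$ --- equivalently, the restriction to the relative interior, which is all the subsequent Ehrhart-reciprocity computation for the open alcove actually uses --- is the correct repair of this boundary issue.
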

\begin{proof}
	Since $j + k \in J_{g,b}$ if $j \in J_{g,b}$, the formula \cref{fcp} implies that 
	\begin{align}
		\rho_T(\sigma^k)\bigl(\pi_T(f^g_b)\bigr) 
		&= \pi_T \circ \rho(\sigma^k)\left(\dfrac{1}{d}\sum_{j \in J_{g,b}}\varpivee_j \right)\\
		&= \pi_T\left(\dfrac{1}{d}\sum_{j \in J_{g,k}}(\varpivee_{j+k} - \varpivee_k)\right)\\
		&= \pi_T(f^g_b - \varpivee_k)\\
		&= \pi_T(f^g_b).
	\end{align}
	Hence $f^g_b \in (\overline{A_\circ})^{\sigma^k}$ for all $b \in \{0,\ldots,g-1\}$.
	By the linearity of $W$-action, we have
	\begin{align}
		\conv\bigset{f^g_b}{g \in \{0,\ldots,g-1\}} \subseteq (\overline{A_\circ})^{\sigma^k}.
	\end{align}
	
	Let $\pi_T(x) \in (\overline{A_\circ})^{\sigma^k}$ with $x = x_0\varpivee_0 + x_1\varpivee_1 + \cdots + x_\ell\varpivee_\ell \in \overline{A_\circ}$ ($x_0,\ldots,x_\ell$ satisfy $x_0 + \cdots + x_\ell = 1$).
	Since
	\begin{align}
		\rho_Z(\sigma^k)(x) 
		&= x_0(\varpivee_k - \varpivee_k) + x_1(\varpivee_{1+k} -  \varpivee_k) + \cdots + x_\ell(\varpivee_{\ell+k} - \varpivee_k)\\
		&= x_0\varpivee_k + x_1\varpivee_{1+k} + \cdots + x_\ell\varpivee_{\ell+k} - \varpivee_k,
	\end{align}
	then
	\begin{align}
		\rho_T(\sigma^k)\bigl(\pi_T(x)\bigr) = \pi_T(x_0\varpivee_k + x_1\varpivee_{1+k} + \cdots + x_\ell\varpivee_{\ell+k}).
	\end{align}
	Therefore we have $x_j = x_{j+k}$ for all $j \in \{0,\ldots,\ell\}$, where we consider that $x_t = x_j$ for $0 \leq j < \ell+1 \leq t$ with $j \equiv t \pmod{\ell+1}$.
	Hence it implies that $x$ belongs to the right-hand set of \cref{fconv}.
\end{proof}

For $q \in \mathbb{Z}_{>0}$, the equation \cref{fconv} implies that
\begin{align}
	q \cdot (\overline{A_\circ})^{\sigma^k} 
	&= \bigset{x_0f^g_0 + \cdots + x_{g-1}f^g_{g-1}}{x_b \geq 0,\ x_0 + \cdots + x_{g-1} = q}\\
	&= \bigset{\dfrac{x_0}{d}\bar{f}^g_0 + \cdots + \dfrac{x_{g-1}}{d}\bar{f}^g_{g-1}}{x_b \geq 0,\ x_0 + \cdots + x_{g-1} = q}.\label{qpt}
\end{align}

\begin{proposition}
	The Ehrhart quasi-polynomial of $(\overline{A_\circ})^{\sigma^k}$ is 
	\begin{align}
		\Ell_{(\overline{A_\circ})^{\sigma^k}}(q) = \begin{cases*}
			0 & if $q \not\in d\mathbb{Z}$;\\
			\displaystyle\binom{g-1+\frac{q}{d}}{g-1} & if $q \in d\mathbb{Z}$.
		\end{cases*}
	\end{align}
	Hence 
	\begin{align}
		\Ell_{A_\circ^{\sigma^k}}(q) = (-1)^{g-1}\Ell_{(\overline{A_\circ})^{\sigma^k}}(-q) &= \begin{cases*}
			0 & if $q \not\in d\mathbb{Z}$;\\
			\displaystyle(-1)^{g-1}\binom{g-1-\frac{q}{d}}{g-1} & if $q \in d\mathbb{Z}$
		\end{cases*}\\
		 &= \begin{cases*}
			0 & if $q \not\in d\mathbb{Z}$;\\
			\dfrac{(q-d)(q-2d) \cdots (q-d(g-1))}{d^{g-1}(g-1)!} & if $q \in d\mathbb{Z}$.
		\end{cases*}
	\end{align}
\end{proposition}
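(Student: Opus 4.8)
The plan is to count the $Z$-lattice points of $q\cdot(\overline{A_\circ})^{\sigma^k}$ directly, using the explicit parametrization recorded in \cref{qpt}. The preceding proposition identifies $(\overline{A_\circ})^{\sigma^k}$ as the simplex $\conv\{f^g_0,\ldots,f^g_{g-1}\}$, so its $q$-th dilate consists exactly of the points $\sum_{b=0}^{g-1}\frac{x_b}{d}\bar{f}^g_b$ with $x_b\ge 0$ and $x_0+\cdots+x_{g-1}=q$. The first step is to rewrite such a point in the $\mathbb{Z}$-basis $\{\varpivee_1,\ldots,\varpivee_\ell\}$ of $Z$: grouping the $\varpivee_j$ according to the residue $j\bmod g$ and recalling $\varpivee_0=0$, the coefficient of $\varpivee_j$ is precisely $x_{j\bmod g}/d$.

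Next I would translate lattice membership into arithmetic constraints on the $x_b$. Since $\{\varpivee_1,\ldots,\varpivee_\ell\}$ is a $\mathbb{Z}$-basis of $Z$, the point lies in $Z$ if and only if $x_{j\bmod g}/d\in\mathbb{Z}$ for every $j\in\{1,\ldots,\ell\}$. Because $\ell=dg-1$, the residues $j\bmod g$ attained by $j\in\{1,\ldots,\ell\}$ exhaust $\{0,\ldots,g-1\}$ as soon as $d\ge 2$, which forces $x_b\in d\mathbb{Z}$ for every $b$; the degenerate case $d=1$ (where $\sigma^k=1$) I would treat separately, noting that the condition is then vacuous since $d\mathbb{Z}=\mathbb{Z}$, so the count reduces to the classical number of lattice points in a dilated simplex.

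The counting is then elementary: writing $x_b=dy_b$, the constraint $\sum_b x_b=q$ becomes $d\sum_b y_b=q$, which has no solution unless $d\mid q$ (in which case the count is $0$), and when $d\mid q$ the number of $(y_0,\ldots,y_{g-1})\in\mathbb{Z}_{\ge 0}^{g}$ with $\sum_b y_b=q/d$ is $\binom{g-1+q/d}{g-1}$ by stars and bars. For the second formula I would observe that $A_\circ^{\sigma^k}$ is the relative interior of the $(g-1)$-dimensional simplex $(\overline{A_\circ})^{\sigma^k}$, so Ehrhart reciprocity (as recalled in the introduction, with sign $(-1)^{\dim P}=(-1)^{g-1}$) yields $\Ell_{A_\circ^{\sigma^k}}(q)=(-1)^{g-1}\Ell_{(\overline{A_\circ})^{\sigma^k}}(-q)$; expanding $\binom{g-1-q/d}{g-1}$ as a product of $g-1$ linear factors and distributing the sign produces the stated factorized expression.

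The main obstacle I anticipate is the integrality bookkeeping in the second step, specifically the silent role of the index $j=0$ (where $\varpivee_0=0$ drops the would-be $b=0$ constraint) together with the boundary case $d=1$: one must check that the uniform condition ``$x_b\in d\mathbb{Z}$ for all $b$'' genuinely captures lattice membership in every regime rather than only generically. A secondary point needing care is justifying that $A_\circ^{\sigma^k}$ is exactly the relative interior of $(\overline{A_\circ})^{\sigma^k}$, so that reciprocity applies with the clean sign $(-1)^{g-1}$.
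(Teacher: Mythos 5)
Your proposal is correct and follows essentially the same route as the paper's proof: use the parametrization of $q\cdot(\overline{A_\circ})^{\sigma^k}$ from the preceding display, characterize lattice membership by $x_b\in d\mathbb{Z}$, count by stars and bars, and pass to the open part via Ehrhart reciprocity in dimension $g-1$. The extra bookkeeping you flag (the dropped $j=0$ constraint, recovered from $\sum_b x_b=q\in\mathbb{Z}$ when $d=1$, and the identification of $A_\circ^{\sigma^k}$ with the relative interior) is sound and in fact slightly more careful than the paper, which asserts the equivalence without the basis computation.
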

\begin{proof}
	The equation \cref{qpt} implies that $x_0f^g_0 + \cdots + x_{g-1}f^g_{g-1} \in q \cdot (\overline{A_\circ})^{\sigma^k}$ belongs to $Z$ if and only if  $x_0,\ldots,x_{g-1} \in d\mathbb{Z}$.
	Therefore $q \cdot  (\overline{A_\circ})^{\sigma^k} \cap Z = \emptyset$ if $q \not\in d\mathbb{Z}$.
	
	Suppose that $q \in d\mathbb{Z}$.
	Then we have
	\begin{align}
		\Ell_{(\overline{A_\circ})^{\sigma^k}}(q) 
		&= \#\bigl(q \cdot (\overline{A_\circ})^{\sigma^k} \cap Z\bigr)\\
		&= \#\bigset{\dfrac{x_0}{d}\bar{f}^g_0 + \cdots + \dfrac{x_{g-1}}{d}\bar{f}^g_{g-1}}{x_b \in d\mathbb{Z}_{>0},\ x_0 + \cdots + x_{g-1} = q}\\
		&= \#\bigset{(x_0,\ldots,x_{g-1}) \in (\mathbb{Z}_{>0})^g}{x_0 + \cdots + x_{g-1} = \dfrac{q}{d}}\\
		&= \binom{g-1+\frac{q}{d}}{g-1}.
	\end{align}
\end{proof}

Define
\begin{align}
	\varphi_g(\ell+1) = \#\bigset{i \in \{1,\ldots,\ell+1\}}{\gcd\{\ell+1,\, i\} = g}.
\end{align}
When $g=1$, then $\varphi_g$ is known as the Euler's totient function.
The value $\varphi_g(\ell+1)$ is equal to the number of permutations in $(\mathfrak{S}_{\ell+1})_{{A_\circ}} = \langle \sigma\rangle$ with the cycle type $(d,\ldots,d)$.

\begin{proposition}
	Let $\sigma \ceq (1\ 2\ \cdots\ \ell+1) \in \mathfrak{S}_{\ell+1}$ and $k \in \{1,\ldots,\ell+1\}$.
	Then we have
	\begin{align}
		\#\bigset{\tau \in \mathfrak{S}_{\ell+1}}{\tau\sigma^k\tau^{-1} \in \langle \sigma\rangle} = \varphi_{g}(\ell+1) \cdot (\ell+1)(\ell+1-d)(\ell+1-2d) \cdots (\ell+1-d(g-1)). \label{nofc}
	\end{align}
\end{proposition}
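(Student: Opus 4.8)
The plan is to reduce the count to a centralizer computation. Write $n \ceq \ell+1$ for brevity, so that $\sigma$ is an $n$-cycle, $\langle\sigma\rangle$ is cyclic of order $n$, and $\sigma^k$ has cycle type $(d,\ldots,d)$ with $g$ parts, where $g = \gcd\{n,k\}$ and $d = n/g$.

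First I would identify which elements of $\langle\sigma\rangle$ can arise as $\tau\sigma^k\tau^{-1}$. Conjugation preserves cycle type, so $\tau\sigma^k\tau^{-1}$ has cycle type $(d,\ldots,d)$ for every $\tau$. On the other hand, a power $\sigma^m$ splits into $\gcd\{n,m\}$ cycles each of length $n/\gcd\{n,m\}$, hence has cycle type $(d,\ldots,d)$ exactly when $\gcd\{n,m\}=g$. Therefore the condition $\tau\sigma^k\tau^{-1}\in\langle\sigma\rangle$ is equivalent to $\tau\sigma^k\tau^{-1}=\sigma^m$ for some $m$ with $\gcd\{n,m\}=g$, and by the definition of $\varphi_g$ there are exactly $\varphi_g(n)=\varphi_g(\ell+1)$ such targets $\sigma^m$.

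Next I would count, for each admissible target $\sigma^m$, the number of $\tau$ realizing $\tau\sigma^k\tau^{-1}=\sigma^m$. Since $\sigma^k$ and $\sigma^m$ have the same cycle type they are conjugate, so this solution set is nonempty; fixing one solution $\tau_0$, the general solution is $\tau\in\tau_0\,C(\sigma^k)$, where $C(\sigma^k)$ denotes the centralizer $\bigset{\tau\in\mathfrak{S}_n}{\tau\sigma^k=\sigma^k\tau}$. Thus each target contributes exactly $\#C(\sigma^k)$ permutations, and since the solution sets for distinct $m$ are disjoint (the element $\tau\sigma^k\tau^{-1}$ is uniquely determined by $\tau$), summing over the $\varphi_g(\ell+1)$ targets gives $\varphi_g(\ell+1)\cdot\#C(\sigma^k)$. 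The centralizer of a permutation made of $g$ disjoint $d$-cycles has order $d^g\cdot g!$, since a commuting permutation may cyclically rotate each $d$-cycle (the factor $d^g$) and permute the $g$ cycles among themselves (the factor $g!$), and every such permutation arises in this way.

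Finally I would simplify $d^g\cdot g!$ using $n=dg$: since $n-dj=d(g-j)$ for $0\le j\le g-1$, the product $(\ell+1)(\ell+1-d)\cdots(\ell+1-d(g-1))=n(n-d)\cdots(n-d(g-1))$ equals $d^g\cdot g(g-1)\cdots 1=d^g\,g!$, which turns $\varphi_g(\ell+1)\cdot d^g\cdot g!$ into the right-hand side of \cref{nofc}. The argument is routine group theory, and the only step demanding genuine care is the centralizer count $\#C(\sigma^k)=d^g g!$; I expect justifying the description of the centralizer of a union of equal-length cycles to be the main (modest) obstacle.
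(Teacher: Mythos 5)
Your argument is correct and follows essentially the same route as the paper: both reduce the count to summing, over the $\varphi_g(\ell+1)$ elements of $\langle\sigma\rangle$ with cycle type $(d,\ldots,d)$, the number of $\tau$ conjugating $\sigma^k$ onto that target. The only cosmetic difference is that you obtain the per-target count as the centralizer order $d^g\,g!$ via the coset structure, whereas the paper enumerates the choices of $\tau(i_{11}),\tau(i_{21}),\ldots$ directly to get $(\ell+1)(\ell+1-d)\cdots(\ell+1-d(g-1))$; these two quantities coincide exactly as you note.
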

\begin{proof}
	Give the cycle decomposition 
	\begin{align}
		\sigma^k = (i_{11}\ \cdots\ i_{1d})(i_{21}\ \cdots\ i_{2d}) \cdots\cdots (i_{g,1}\ \cdots\ i_{g,d}).
	\end{align}
	Then it is well known that 
	\begin{align}
		\tau\sigma^k\tau^{-1} = \bigl(\tau(i_{11})\ \cdots\ \tau(i_{1d})\bigr)\bigl(\tau(i_{21})\ \cdots\ \tau(i_{2d})\bigr) \cdots\cdots \bigl(\tau(i_{g,1})\ \cdots\ \tau(i_{g,d})\bigr)
	\end{align}
	for $\tau \in \mathfrak{S}_{\ell+1}$.
	If $\tau\sigma^k\tau^{-1} \in \langle\sigma\rangle$, then $\tau\sigma^k\tau^{-1}$ is equal to one of the $\varphi_{g}(\ell+1)$ permutations with the same cycle type contained in $\langle\sigma\rangle$.
	Since the order of cycles in the cycle decomposition and the order of integers in each cycle do not affect the resulting permutation, we  can choose an arbitrary number from $\{1,\ldots,\ell+1\}$ as $\tau(i_{11})$.
	For the same reason, we can choose any number not included in the cycle containing $\tau(i_{11})$ as $\tau(i_{21})$.
	Thus we obtain the formula \cref{nofc}.
\end{proof}

Therefore we have
\begin{align}
	&\left(\Ind^{\mathfrak{S}_{\ell+1}}_{(\mathfrak{S}_{\ell+1})_{{A_\circ}}}\chi_{A_\circ,q}\right)(\sigma^k) \\
	&\qquad \qquad = \dfrac{1}{\#\langle\sigma\rangle}\sum_{\substack{\tau \in \GAMMA\\\tau\sigma^k\tau^{-1} \in \langle\sigma\rangle}}\chi_{A_\circ,q}(\tau\sigma^k\tau^{-1})\\
	&\qquad \qquad = \dfrac{\#\bigset{\tau \in \mathfrak{S}_{\ell+1}}{\tau\sigma^k\tau^{-1} \in \langle \sigma\rangle}}{\ell+1} \cdot \Ell_{A_\circ^{\sigma^k}}(q)\\
	&\qquad \qquad = \begin{cases*}
		0 & if $q \not\in d \mathbb{Z}$;\\
		\dfrac{\varphi_{g}(\ell+1) \cdot (\ell+1-d) \cdots (\ell+1-d(g-1)) \cdot (q-d) \cdots (q-d(g-1))}{d^{g-1}(g-1)!} & if $q \in d\mathbb{Z}$
	\end{cases*}\\
	&\qquad \qquad = \begin{cases*}
		0 & if $q \not\in d \mathbb{Z}$;\\
		\varphi_{g}(\ell+1) \cdot (q-d) \cdots (q-d(g-1)) & if $q \in d\mathbb{Z}$.
	\end{cases*}
\end{align}
By \cref{weylver}, we can obtain the following.

\begin{theorem}
	For each $\sigma \in \mathfrak{S}_{\ell+1}$, we have
	\begin{align}
		\chi_{\A,q}(\sigma) = \begin{cases*}
			\varphi_{g}(\ell+1) \cdot (q-d) \cdots (q-d(g-1)) & if $q \in d\mathbb{Z}$ and $\sigma$ has the cycle type $(d,\ldots,d)$;\\
		0 & otherwise,
	\end{cases*}
	\end{align}
	where $g$ and $d$ are divisors of $\ell+1$ satisfying $gd = \ell+1$.
	Hence $\chi_{\A,q}$ has the minimum period $\ell+1$.
\end{theorem}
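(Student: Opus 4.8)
The plan is to read the statement off \cref{weylver} together with the induced--character evaluation displayed immediately before the theorem. By \cref{weylver} we have $\chi_{\A,q} = \Ind^{\mathfrak{S}_{\ell+1}}_{\langle\sigma\rangle}\chi_{A_\circ,q}$, where $\sigma = (1\ 2\ \cdots\ \ell+1)$ generates the isotropy group $(\mathfrak{S}_{\ell+1})_{A_\circ}$. Since an induced character is again a character, $\chi_{\A,q}$ is a class function on $\mathfrak{S}_{\ell+1}$, so its value at a general permutation $\tau$ depends only on the cycle type of $\tau$. The computation preceding the theorem already evaluates $\chi_{\A,q}(\sigma^k)$ for every $k$, and the cycle types occurring among the powers $\sigma^0,\ldots,\sigma^\ell$ are exactly the rectangular partitions $(d,\ldots,d)$ of $\ell+1$, with $g=\gcd\{\ell+1,k\}$ parts of common length $d=(\ell+1)/g$. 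Thus the first task is to transport that value to an arbitrary $\tau$ of the same cycle type by conjugacy.

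Concretely, I would split into two cases. If $\tau$ has a cycle type that is not of the form $(d,\ldots,d)$, then no conjugate of $\tau$ lies in $\langle\sigma\rangle$; since the support of $\Ind^{\mathfrak{S}_{\ell+1}}_{\langle\sigma\rangle}\chi_{A_\circ,q}$ is contained in the union of the conjugacy classes meeting $\langle\sigma\rangle$, we get $\chi_{\A,q}(\tau)=0$, which is the ``otherwise'' case. If instead $\tau$ has cycle type $(d,\ldots,d)$ with $g=(\ell+1)/d$ cycles, then $\tau$ is conjugate in $\mathfrak{S}_{\ell+1}$ to $\sigma^{g}$ (which has $\gcd\{\ell+1,g\}=g$ cycles of length $d$), so the class--function property gives $\chi_{\A,q}(\tau)=\chi_{\A,q}(\sigma^{g})$, and the displayed formula yields $0$ when $q\notin d\mathbb{Z}$ and $\varphi_{g}(\ell+1)(q-d)\cdots(q-d(g-1))$ when $q\in d\mathbb{Z}$. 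This is exactly the claimed value, establishing the first assertion.

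For the period I would prove the two inequalities separately. That $\ell+1$ is a period follows because, for each divisor $d$ of $\ell+1$, the value of $\chi_{\A,q}$ on the class $(d,\ldots,d)$ is a quasi-polynomial in $q$ of period $d$ (a fixed polynomial when $d\mid q$, and $0$ otherwise), and $\lcm\{d : d\mid \ell+1\}=\ell+1$; applying \cref{lemer} over the finitely many conjugacy classes then shows that $\ell+1$ is a period of the character--valued quasi-polynomial $\chi_{\A,q}$. For minimality I would isolate the class of a single $(\ell+1)$-cycle (the case $d=\ell+1$, $g=1$): there the value is $\varphi_{1}(\ell+1)$ when $(\ell+1)\mid q$ and $0$ otherwise, and since $\varphi_{1}(\ell+1)\neq 0$ this indicator-type function has minimum period exactly $\ell+1$. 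Hence no proper divisor of $\ell+1$ can be a period of $\chi_{\A,q}$, and the minimum period is $\ell+1$.

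The genuinely load-bearing step is the reduction from powers of $\sigma$ to arbitrary permutations: everything hinges on recognizing that $\chi_{\A,q}$ is a class function and that the conjugacy classes of $\mathfrak{S}_{\ell+1}$ meeting the cyclic group $\langle\sigma\rangle$ are precisely those with equal-length cycles. The period computation is then routine bookkeeping, the only delicate point being the lower bound, where one must exhibit a single conjugacy class whose constituent already realizes period $\ell+1$.
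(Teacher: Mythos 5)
Your proposal is correct and follows essentially the same route as the paper: it invokes \cref{weylver} together with the computation of $\bigl(\Ind^{\mathfrak{S}_{\ell+1}}_{\langle\sigma\rangle}\chi_{A_\circ,q}\bigr)(\sigma^k)$ displayed just before the theorem, and then extends from powers of $\sigma$ to arbitrary permutations via the class-function property and the fact that the classes meeting $\langle\sigma\rangle$ are exactly the rectangular cycle types. Your explicit treatment of the minimum period (lcm of the divisors $d$ for the upper bound, the $(\ell+1)$-cycle class for the lower bound) is a correct filling-in of a step the paper leaves implicit.
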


\section*{Acknowledgement}

The author would like to thank Professor Masahiko Yoshinaga for the
helpful discussions and comments on this research.
The author also acknowledge support by JSPS KAKENHI, Grant Number 25KJ1735.


\bibliographystyle{amsplain}
\bibliography{bibfile}

\end{document}